\renewcommand{\tilde}{\widetilde}
\newtheorem{theo}{Theorem}[section]
\newtheorem{lemm}[theo]{Lemma}
\newtheorem{coro}[theo]{Corollary}
\newtheorem{prop}[theo]{Proposition}
\theoremstyle{remark}
\newtheorem{rema}[theo]{Remark}
\theoremstyle{definition}
\newtheorem{defi}[theo]{Definition}
\newtheorem{exem}[theo]{Example}
\newcommand{\mb}[1]{\mathbf{#1}}
\newcommand{\CC}{\mb{C}}
\newcommand{\PP}{\mb{P}}
\newcommand{\QQ}{\mb{Q}}
\newcommand{\RR}{\mb{R}}
\newcommand{\ZZ}{\mb{Z}}
\newcommand{\OP}{\operatorname}
\newcommand{\cp}[1]{\CC\PP^{#1}}
\newcommand{\rt}{\bm{\mu}}
\newcommand{\mM}{\mathcal{M}}
\newcommand{\orb}[1]{\skew{3}\hat{#1}}
\mathchardef\ordinarycolon\mathcode`\:
\newcommand*{\da@rightarrow}{\mathchar"0\hexnumber@\symAMSa 4B }
\newcommand*{\da@leftarrow}{\mathchar"0\hexnumber@\symAMSa 4C }
\newcommand*{\xdashrightarrow}[2][]{%
  \mathrel{%
    \mathpalette{\da@xarrow{#1}{#2}{}\da@rightarrow{\,}{}}{}%
  }%
}
\newcommand{\xdashleftarrow}[2][]{%
  \mathrel{%
    \mathpalette{\da@xarrow{#1}{#2}\da@leftarrow{}{}{\,}}{}%
  }%
}
\newcommand*{\da@xarrow}[7]{%
  \sbox0{$\ifx#7\scriptstyle\scriptscriptstyle\else\scriptstyle\fi#5#1#6\m@th$}%
  \sbox2{$\ifx#7\scriptstyle\scriptscriptstyle\else\scriptstyle\fi#5#2#6\m@th$}%
  \sbox4{$#7\dabar@\m@th$}%
  \dimen@=\wd0 %
  \ifdim\wd2 >\dimen@
    \dimen@=\wd2 %
  \fi
  \count@=2 %
  \def\da@bars{\dabar@\dabar@}%
  \@whiledim\count@\wd4<\dimen@\do{%
    \advance\count@\@ne
    \expandafter\def\expandafter\da@bars\expandafter{%
      \da@bars
      \dabar@ 
    }%
  }%
  \mathrel{#3}%
  \mathrel{%
    \mathop{\da@bars}\limits
    \ifx\\#1\\%
    \else
      _{\copy0}%
    \fi
    \ifx\\#2\\%
    \else
      ^{\copy2}%
    \fi
  }%
  \mathrel{#4}%
}
\newcommand{\mC}{\mathcal{C}}
\newcommand{\mE}{\mathcal{E}}
\title[Bounds on Wahl singularities]{Bounds on Wahl singularities\\ from symplectic topology}
\author{Jonathan David Evans} \address{ Department of
  Mathematics\\ University College London\\ Gower
  Street\\ London\\ WC1E 6BT\\ United Kingdom}
\email{j.d.evans@ucl.ac.uk} \author{Ivan Smith} \address{Centre for
  Mathematical Sciences\\ University of Cambridge\\ Wilberforce
  Road\\ CB3 0WB\\ United Kingdom.}  \thanks{I.S. is partially
  supported by EPSRC Fellowship EP/N01815X/1.\\J.E. is supported by EPSRC
  Standard Grant EPSRC Grant EP/P02095X/1.} \email{is200@cam.ac.uk}
\begin{document}

\begin{abstract}
  Let $X$ be a minimal surface of general type with $p_g>0$ ($b^+>1$)
  and let $K^2$ be the square of its canonical class. Building on work
  of Khodorovskiy and Rana, we prove that if $X$ develops a Wahl
  singularity of length $\ell$ in a $\QQ$-Gorenstein degeneration,
  then $\ell\leq 4K^2+7$. This improves on the current best-known
  upper bound due to Lee ($\ell\leq 400(K^2)^4$). Our bound follows
  from a stronger theorem constraining symplectic embeddings of
  certain rational homology balls in surfaces of general type. In
  particular, we show that if the rational homology ball $B_{p,1}$
  embeds symplectically in a quintic surface, then $p\leq 12$, partially 
  answering the symplectic version of a question of Kronheimer. \newline \\ \emph{Keywords:} Wahl singularities, surfaces of general type, rational homology balls, symplectic embeddings, Seiberg-Witten invariants.\  MSC 2000: \emph{14J29, 53D35, 57R57.}  
\end{abstract}

\maketitle

\section{Introduction}

A complex surface is said to have general type if its canonical bundle
is big. The moduli space of surfaces of general type with fixed
characteristic numbers $K^2$ and $\chi$ admits a compactification,
constructed by Koll\'{a}r and Shepherd-Barron, whose boundary points
correspond to surfaces with {\em semi-log-canonical} (slc)
singularities, in much the way that the boundary points of
Deligne-Mumford space correspond to nodal curves. See \cite{Hacking}
for a survey of these moduli spaces.

The fact that this moduli space is compact was proved by Alexeev
\cite{Alexeev}, and is equivalent to the fact that there is a bound
(in terms of $K^2$ and $\chi$) on the index of the slc singularities
which appear at the boundary of the moduli space. A long-standing
question is to give explicit and effective bounds on the possible
indices in terms of the characteristic numbers. Existing upper bounds
are much weaker than one would expect from the known examples.

In this paper, we focus on cyclic quotient singularities of type
$\frac{1}{p^2}(pq-1,1)$ ({\em Wahl singularities}), and give a
boundedness result via an approach from symplectic topology.

To state the result, we recall that the minimal resolution of the Wahl
singularity $\frac{1}{p^2}(pq-1,1)$ has exceptional locus a chain of
spheres $C_1,\ldots,C_\ell$ with negative squares $C_i^2=-b_i$, where
\[\frac{p^2}{pq-1}=b_1-\frac{1}{b_2-\frac{1}{b_3-\cdots}}\]
is a continued fraction expansion. The number $\ell$ is called the
{\em length} of the singularity. The index of the singularity is $p$,
and is bounded above by $2^\ell$.

\begin{theo}\label{theo:alg_geom_version}
  Let $X$ be a minimal surface of general type with positive geometric
  genus $p_g>0$ which has a finite set of Du Val and Wahl
  singularities. Then the length $\ell$ of any of the Wahl
  singularities satisfies
  \[\ell\leq 4K_X^2+7.\]
\end{theo}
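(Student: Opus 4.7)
The plan is to convert the algebraic statement into a symplectic embedding problem, then use Seiberg--Witten theory together with a combinatorial reduction on Wahl-chain continued fractions. Given $X$ minimal of general type with Du Val and Wahl singularities and $p_g(X)>0$, consider a $\QQ$-Gorenstein smoothing $X'$: this is a smooth minimal complex (hence symplectic) surface of general type with $K_{X'}^2 = K_X^2$ and $b^+(X') = 2p_g(X)+1 > 1$. The smoothing of each Wahl singularity $\tfrac{1}{p^2}(pq-1,1)$ has Milnor fiber a rational homology ball $B_{p,q}$ bounded by the lens space $L(p^2, pq-1)$, which therefore embeds symplectically in $X'$. Since the length $\ell$ is a diffeomorphism invariant of this embedded ball, it suffices to prove the purely symplectic statement: if $B_{p,q}$ of length $\ell$ embeds symplectically in a minimal symplectic $4$-manifold $Y$ of general type with $b^+(Y)>1$, then $\ell \leq 4K_Y^2+7$.

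I would attack this embedding question in two stages. First, reduce to the case $q=1$ by a sequence of mutations on Wahl chains, realised symplectically as rational blow-downs with compensating blow-ups; the key combinatorial input is that one can track both the length $\ell$ and the ambient $K^2$ across these mutations and convert a bound in the $q=1$ case into a bound in the general case. Second, bound $\ell$ for $B_{p,1}$ itself, in which case the Wahl chain is $[p+2,2,\ldots,2]$ of length $p-1$, so the task reduces to a linear-in-$K^2$ bound on $p$. Symplectically blowing up $B_{p,1}$ inside $Y$ replaces it by a chain of spheres $C_1,\ldots,C_{p-1}$ with $C_1^2=-(p+2)$ and $C_i^2=-2$ for $i\geq 2$. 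By the symplectic adjunction inequality applied to the canonical basic class $K_{\tilde Y}$ (which exists by Seiberg--Witten theory, since $b^+>1$ is preserved under this surgery), one computes $K_{\tilde Y}\cdot C_1 = p$ and $K_{\tilde Y}\cdot C_i = 0$ for $i\geq 2$. Combining this with the blow-up formula for Seiberg--Witten invariants and the arithmetic relating $K_{\tilde Y}^2$ to $K_Y^2$ across the rational blow-up yields the desired linear bound.

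The main obstacle is extracting the sharp constant $4K^2+7$. After the rational blow-up the manifold $\tilde Y$ is no longer minimal, so one must separate carefully the contribution of the chain $C_\bullet$ from that of the underlying minimal model when computing Seiberg--Witten basic classes and canonical self-intersections. A related subtlety is that the ambient manifold produced by the mutation reduction to $q=1$ need not be minimal either, so one must verify that any failure of minimality is controlled by explicit exceptional divisors with known intersection numbers. The leading constant $4$ is likely to arise from the adjunction contribution of the long $(-p-2)$-curve $C_1$ paired against the canonical class, with the additive $+7$ absorbing boundary terms from the $(-2)$-tail of the chain and the initial mutation reduction.
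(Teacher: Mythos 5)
Your first step---trading the algebraic statement for a symplectic embedding of $B_{p,q}$ into a negatively monotone $4$-manifold with $b^+>1$---is the same reduction the paper makes, though note the paper does not assume a global $\QQ$-Gorenstein smoothing exists: it smooths the Du Val and Wahl singularities \emph{locally and symplectically} by gluing in Milnor fibres, which is why the hypothesis of Theorem \ref{theo:alg_geom_version} can be purely about which singularities $X$ carries. After that the proposal diverges from anything that works. Your stage one, reducing to $q=1$ by ``mutations on Wahl chains realised as rational blow-downs with compensating blow-ups,'' is not justified and is not how the bound is obtained. The $L$ and $R$ moves on T-strings are combinatorial operations relating $(p,q)$ to $(p+q,q)$ and $(2p-q,p-q)$; there is no geometric surgery on the ambient manifold that converts a symplectic embedding of $B_{p,q}$ into one of the predecessor ball while keeping $K^2$ and the $b^+>1$ hypothesis under control, and the paper never attempts such a reduction---it handles arbitrary $(p,q)$ directly.

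Your stage two also stops exactly where the real work begins. Computing $K\cdot C_1=p$ and $K\cdot C_i=0$ from adjunction, together with $K_{\tilde Y}^2=K_Y^2-\ell$ and the blow-up formula, yields no bound on $p$ by itself, because $\tilde Y$ is not minimal; you name this as ``the main obstacle'' but do not resolve it. The paper's resolution occupies Sections \ref{sct:ratblow}--\ref{sct:bounding}: minimality of the minimal model $S$ gives $k\geq\ell-K_X^2$ exceptional classes (Taubes/Liu, Theorem \ref{theo:taubes}); a Mayer--Vietoris and discrepancy argument with holomorphic caps on Reeb orbits (Theorem \ref{theo:magic}, using $a_1+a_\ell=-1$) forces every exceptional curve to meet the chain $\mathcal{C}$ and rules out certain intersection patterns; the inequality $\sum_i E_i\cdot\mathcal{C}\leq\ell+1$ (Lemma \ref{lemm:rana}) caps the total intersection; and a Gromov-compactness degeneration to exceptional curves of the first kind, combined with the case analysis of Propositions \ref{prop:badA} and \ref{prop:badB}, bounds the number of ``bad'' curves with $E\cdot\mathcal{C}=1$ by $\tfrac{1}{2}(\ell+5)$. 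The constants $4$ and $7$ come from combining $\ell\leq 2K^2+p+1$ with $p\leq\tfrac{1}{2}(\ell+5)$, not from the adjunction contribution of the long curve $C_1$. None of this machinery appears in your outline, so the proposal as written does not constitute a proof.
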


\begin{rema}
  The best bound for Wahl singularities we could find in the algebraic
  geometry literature\footnote{After the first version of this paper
    was prepared, in June 2017, we learned that a very similar
    (slightly stronger, and optimal) result had been proved by Rana
    and Urz\'{u}a (see \cite{RanaUrzua}). They
    use algebro-geometric rather than symplectic methods, so do not
    recover the purely symplectic Theorem \ref{theo:main} below. It is
    possible that their arguments can be used to strengthen our
    bounds.} is due to Lee \cite[Theorem 23]{Lee}, who showed that if
  $X$ is a stable surface of general type having a Wahl singularity of
  length $\ell$, and if the minimal model $S$ of the minimal
  resolution $\tilde{X}$ of $X$ has general type, then
  \[\ell\leq 400(K_X^2)^4.\]
  In another paper, Rana {\cite[Theorem 1.2]{Rana}} assumes further
  that the map $\tilde{X}\to S$ involves blowing down exactly $\ell-1$
  times and proves that, in this case, $\ell$ is either $2$ or
  $3$. She uses this to study the boundary of the
  Koll\'{a}r--Shepherd-Barron--Alexeev (KSBA) moduli space for
  surfaces with $K^2=\chi=5$ (e.g. quintic surfaces).
\end{rema}
  
\begin{rema}
  The hypothesis that $p_g>0$ (which is equivalent to $b^+(X)>1$) is
  there because our proof uses results from Seiberg-Witten theory
  and holomorphic curve theory which break down when $b^+(X)=1$ (or at
  least when the minimal model of $\tilde{X}$ is rational or
  ruled). The hypothesis entails that the minimal model $S$ is neither
  rational nor ruled, but allows, for example, that $S$ is an elliptic
  surface of positive genus (see Remark \ref{rema:lp} below for
  examples).
\end{rema}

Theorem \ref{theo:alg_geom_version} is really a theorem about
symplectic topology. The Wahl singularity $\frac{1}{p^2}(pq-1,1)$
admits a $\QQ$-Gorenstein smoothing whose smooth fibre is a symplectic
rational homology ball $B_{p,q}$ \cite{Wahl}. In \cite{Khodorovskiy},
Khodorovskiy conjectures that, for a surface $X$ of general type with
$b^+>1$, there is a bound on $p$ (depending only on $K_X^2$) such that
$B_{p,1}$ embeds symplectically in $X$. What we prove is the
following:

\begin{theo}[Generalised Khodorovskiy conjecture]\label{theo:main}
  Let $(X,\omega)$ be a symplectic 4-manifold with $K_X=[\omega]$ and
  $b^+(X)>1$. If there is a symplectic embedding $\iota\colon B_{p,q}\to X$,
  then
  \[\ell\leq 4K^2+7,\]
  where $\ell$ is the length of the continued fraction expansion of
  $p^2/(pq-1)$. In the special case $p=n$, $q=1$ (when $\ell=n-1$) we
  get the stronger inequality:
  \[\ell\leq 2K^2+1.\]
\end{theo}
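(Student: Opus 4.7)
The plan is to turn the symplectic embedding question into a configuration-of-spheres question on a new symplectic $4$-manifold, and then apply Taubes--Seiberg--Witten theory together with positivity of intersections to derive a linear-in-$\ell$ constraint. First I would perform a symplectic rational blow-up: cut out the image of $\iota\colon B_{p,q}\to X$ and glue in the minimal resolution $P$ of the Wahl singularity $\frac{1}{p^2}(pq-1,1)$, a plumbing of $\ell$ symplectic spheres $C_1,\ldots,C_\ell$ with $C_i^2=-b_i$ and $C_i\cdot C_{i+1}=1$. This cut-and-paste produces a symplectic manifold $(\tilde X,\tilde\omega)$ and, because $B_{p,q}$ is a rational homology ball, preserves $b^+$, so $b^+(\tilde X)>1$ and Seiberg--Witten theory is available on $\tilde X$.

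Next I would record the key numerics on $\tilde X$: adjunction on each symplectic sphere gives $K_{\tilde X}\cdot C_i=b_i-2\geq 0$, while a standard computation for $\QQ$-Gorenstein resolutions of T-singularities gives $K_{\tilde X}^2=K_X^2-\ell$. The hypothesis $K_X=[\omega]$ translates into a relation $K_{\tilde X}=[\tilde\omega]+\sum_i\alpha_i[C_i]$ with rational coefficients fixed by the $b_i$ and the areas $\tilde\omega\cdot C_i$. Taubes' SW${}={}$Gr theorem then produces, for generic compatible $J$, an embedded $J$-holomorphic subvariety $D$ Poincar\'e-dual to $K_{\tilde X}$. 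Positivity of intersections forces $D\cdot C_i\geq 0$ for any $C_i$ not a component of $D$, and any $C_i$ absorbed as a component must be a $(-2)$-sphere on energy grounds, since $K_{\tilde X}\cdot C_i=b_i-2$ and $\tilde\omega$ assigns positive area to each component.

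I would then combine these positivity constraints with $K_{\tilde X}^2=K_X^2-\ell$ and an iterative blow-down procedure for any $(-1)$-sphere produced by the surgery (whose existence is controlled by Taubes), tracking how such spheres intersect the chain, to extract the bound $\ell\leq 4K_X^2+7$. In the special case $q=1$ the continued fraction for $p^2/(p-1)$ is $[p+2,2,\ldots,2]$, so only $C_1$ pairs nontrivially with $K_{\tilde X}$ and the bookkeeping simplifies enough to give the sharper estimate $\ell\leq 2K_X^2+1$. The hard part will be controlling the Gromov/SW representative of $K_{\tilde X}$ near the chain: the form $\tilde\omega$ gives only limited control over the areas $\tilde\omega\cdot C_i$, so a priori the canonical curve could absorb large sections of the chain and degenerate the count. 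Overcoming this will likely require a careful neck-stretching argument along the contact boundary $L(p^2,pq-1)$ of the rational homology ball, in the spirit of symplectic field theory, together with a deformation that makes the areas $\tilde\omega\cdot C_i$ compatible with $\tilde\omega\cdot K_{\tilde X}$, so that the count of intersections of $D$ with the chain can be converted into the claimed linear inequality in $\ell$.
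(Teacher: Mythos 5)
Your setup (generalised rational blow-up to $\tilde{X}$, $b^+(\tilde X)>1$ preserved, $K_{\tilde X}\cdot C_i=b_i-2$, $K_{\tilde X}^2=K_X^2-\ell$) matches the paper, but the engine you propose does not produce the bound, and the place where it would have to is exactly the step you defer to ``a careful neck-stretching argument.'' The positivity statement you extract from the Taubes representative $D$ of $K_{\tilde X}$ --- namely $D\cdot C_i\ge 0$ --- is the tautology $b_i-2\ge 0$, already known from adjunction; it carries no information relating $\ell$ to $K_X^2$, so no ``bookkeeping'' downstream of it can yield $\ell\le 4K^2+7$. You also never use the hypothesis $K_X=[\omega]$: you note that it gives a relation between $K_{\tilde X}$, $[\tilde\omega]$ and the $[C_i]$, but nothing in your argument depends on it, whereas the theorem is genuinely sensitive to the cohomology class of $\omega$ (the paper's Remark \ref{rema:varying_omega} gives examples with nearby non-monotone forms admitting unboundedly many $B_{p,q}$), so any proof that does not invoke monotonicity somewhere essential must be incomplete.

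The missing ideas are: (i) pass to the minimal model $S$ of $\tilde X$ and use $K_S^2\ge 0$ (Taubes/Liu) to see that at least $k\ge \ell-K_X^2$ exceptional classes must be blown down; (ii) the Seiberg--Witten constraint (Corollary \ref{coro:rat}) that every somewhere-injective rational curve with $K\cdot A\le -1$ is an embedded $-1$-sphere, which controls how these exceptional spheres degenerate into configurations meeting the chain $\mathcal{C}$; (iii) the inequality $\sum_i E_i\cdot\mathcal{C}\le \ell+1$ obtained from $K_{\tilde X}=\pi^*K_S+\sum E_i$ --- this is the correct incarnation of your positivity idea, applied to $K_S$ in the minimal model rather than to $K_{\tilde X}$ itself; and (iv) the use of $K_X=[\omega]$ via the discrepancies $a_j\in(-1,0)$ with $a_1+a_\ell=-1$ to show $\sum_j a_j\,F\cdot C_j<K_{\tilde X}\cdot F$ for holomorphic $F$, which forces most exceptional curves to meet $\mathcal{C}$ at least twice and lets one bound the number of ``bad'' ones by roughly $\ell/2$. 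Combining (i)--(iv) gives $2(\ell-K_X^2-p)+p\le \ell+1$ with $p\le\frac12(\ell+5)$, hence the theorem. Without (iii) and (iv) your count of intersections with the chain never gets off the ground, and the special case $q=1$ (where your continued fraction $[p+2,2,\ldots,2]$ is correct) still requires the argument that a $-1$-sphere cannot meet the interior of the $-2$-chain, not merely the observation that only one $C_i$ pairs nontrivially with $K_{\tilde X}$.
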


\begin{rema}
  To see how Theorem \ref{theo:alg_geom_version} follows from this,
  note that if $X$ is a minimal surface of general type then its
  canonical model is a symplectic orbifold with at worst Du Val
  singularities where the canonical map has contracted a collection of
  $-2$-spheres. A Du Val singularity can be smoothed symplectically by
  excising a neighbourhood of the singularity and replacing it with a
  copy of the Milnor fibre of the singularity. The Milnor fibre of a
  Du Val singularity is a symplectic manifold which
  deformation-retracts onto a configuration of Lagrangian spheres (the
  vanishing cycles). The result is a symplectic 4-manifold
  $(X,\omega)$ with $K_X=[\omega]$; see {\cite[Proposition 3.1]{STY}}
  for more details. One can also smooth Wahl singularities
  symplectically, by replacing a neighbourhood of the singularity with
  a copy of the Milnor fibre $B_{p,q}$. Therefore if a surface of
  general type develops a Wahl singularity, one can find a symplectic
  4-manifold diffeomorphic to $X$ with $K_X=[\omega]$, admitting a
  symplectic embedding of $B_{p,q}$.

  For other papers which find obstructions to symplectic embeddings of
  rational homology balls, see \cite{EvansSmith,LM}. Theorem
  \ref{theo:main} could also be restated as a constraint on Lagrangian
  embeddings of pinwheels as in \cite{EvansSmith}.
\end{rema}

\begin{rema}\label{rema:varying_omega}
  Given a $\QQ$-Gorenstein degeneration $\mathcal{X} \to \Delta$ over
  the disc with central fibre $X_0$ of general type (so having ample
  canonical class) and having a Wahl singularity $1/p^2(1,pq-1)$, one
  constructs the embedding $B_{p,q} \hookrightarrow X_1$ into the
  general fibre by symplectic parallel transport.  If $X_0$ does not
  have ample canonical class, there is no global (orbifold) K\"ahler
  form in the class $K_{\mathcal{X}}$. In this case, one may need to
  perturb the (cohomology class of the) K\"ahler form in order to
  define parallel transport.

  In \cite{Hacking-flipping}, Hacking, Tevelev and Urz\'ua show that a
  surface of general type may admit degenerations in which infinitely
  many different Wahl singularities arise on the central fibre, where
  the central fibres are not ample (they are obtained by 3-fold flips
  from a $K$-ample degeneration). Translating back into the symplectic
  category, this implies that there are symplectic four-manifolds $X$
  with the property that $K_X=[\omega]$, and an open neighbourhood
  $K_X \in U \subset H^2(X;\RR)$, for which infinitely many rational
  balls $B_{p,q}$ admit symplectic embeddings into $(X,\omega_s)$ for
  some symplectic form with $[\omega_s] \in U$. (The parameter $s$
  will depend on the particular rational ball; in the setting of the
  previous paragraph, it is determined by the geometry of the flip.)
  Thus, not only is Theorem \ref{theo:main} an essentially symplectic
  rather than smooth phenomenon, but it is also sensitive to the
  cohomology class of the symplectic form\footnote{Added in proof: The
    recent paper \cite{EU} uses these ideas to find a single
    non-monotone symplectic form on a quintic surface which admits
    infinitely many symplectically embedded rational balls \(B_{p,q}\)
    with \(p\) unbounded.}.
\end{rema}

\begin{rema}
  In \cite{Khodorovskiy}, Khodorovskiy uses techniques from
  Seiberg-Witten theory and holomorphic curves to get strong
  restrictions on the way in which $-1$-spheres can intersect the
  curves $C_1,\ldots,C_\ell$ in the minimal resolution.  All of the
  techniques and ideas we use to prove Theorem \ref{theo:main} can be
  found in some form in Khodorovskiy's paper, and the structure of our
  case analysis is closely modelled on that in Rana's paper
  \cite{Rana}. The key trick which makes our case analysis easier than
  in Rana's seems to be the fact that we can perturb the almost
  complex structure on the minimal resolution so that the only
  somewhere-injective holomorphic curves present are the curves
  $C_1,\ldots,C_\ell$ and a finite set of embedded $-1$-spheres.
\end{rema}

\begin{rema}
  Theorem \ref{theo:main} is related to a question of Kronheimer
  {\cite[Problem 4.7]{Kronheimer}}, who asked: {\em For which $p$ does
    $B_{p,1}$ embed in the quintic surface?} If you ask for the
  embedding to be symplectic, then from Theorem \ref{theo:main}, we
  know that $p\leq 12$. Indeed, Theorem \ref{theo:main} applies to any
  surface of degree $d\geq 5$ in $\cp{3}$, which has $K^2=d(d-4)^2$
  and $p_g=\frac{d}{6}(d^2-6d+11)-1$. {\em Smooth} embeddings of
  $B_{p,1}$ into 4-manifolds are much more plentiful, see
  \cite{KhodorovskiySmooth,PPS}.
\end{rema}

\begin{rema}
The projective plane contains $B_{p,q}$ for infinitely many $p$, cf. \cite{EvansSmith}, so some hypotheses on the ambient manifold are certainly required to obtain a finiteness result.
\end{rema}

\begin{rema}
  It is unfortunate that we need to make the restriction $b^+(X)>1$
  (rather than just $K_X = [\omega]$) because huge numbers of examples
  have been constructed when $b^+=1$. See \cite{US} for an impressive
  list. The original recipe for constructing examples like these is
  due to Park \cite{Park} and Lee and Park \cite{LeeParkQG}: you blow
  up $\cp{2}$ a large number of times, find a configuration of curves
  $C_1,\ldots,C_\ell$ combinatorially equivalent to the exceptional
  locus of the minimal resolution of a Wahl singularity
  $\frac{1}{p^2}(pq-1,1)$, then contract and smooth to get a surface
  of general type with $b^+=1$ containing a symplectic embedding of
  $B_{p,q}$. It seems that it would require new ideas to prove a bound
  when $b^+=1$. The key input which fails when $b^+=1$ is Corollary
  \ref{coro:rat}. This is more than a technical issue: for a startling
  illustration of the geometric ramifications of this failure, see
  Remark \ref{rema:zhang}.
  \end{rema}

\begin{rema}\label{rema:lp}
  Happily, there are examples of Wahl singularities of type
  $\frac{1}{n-2}(n-3,1)$ in Horikawa surfaces $H(n)$ (again due to Lee
  and Park \cite{LeeParkHorikawa}) where the minimal model of the
  minimal resolution is an elliptic surface $E(n)$. In particular,
  these satisfy $b^+(H(n))=2n-1$ and $K_{H(n)}^2=4n-6$, so in this
  case the length $\ell=n-1$ is $\frac{1}{4}(K^2+2)$. In particular,
  we see that the best we can hope for is a bound which is linear in
  $K^2$. These examples are particularly interesting: the KSBA stable
  surface whose smoothing is \(H(n)\) has two Wahl singularities of
  type \(\frac{1}{n-2}(n-3,1)\) and they cannot be smoothed
  independently of one another, because the surface obtained by
  smoothing one and resolving the other violates the Noether
  inequality (see {\cite[Corollary 7.5]{FintushelSternRB}}). This
  shows one big advantage of the symplectic approach to bounding
  singularities: we can always rationally blow-down one of the
  singularities and bound the singularities one at a time.
\end{rema}

\begin{rema}
  The rational homology balls $B_{p,q}$ have played a prominent role
  in low-dimensional topology since the papers of Fintushel and Stern
  \cite{FintushelSternRB,FintushelSternKL4,FintushelSternDNN}, where
  they have been used to construct exotic 4-manifolds with small Betti
  numbers, starting with the paper \cite{Park}.
\end{rema}

\subsection{Structure of the paper}

In Section \ref{sct:ratblow}, we introduce the tools that we will use
in the rest of the paper. The key result is Corollary \ref{coro:rat},
which uses Seiberg-Witten theory to rule out the existence of certain
holomorphic rational curves in symplectic manifolds with $b^+>1$. This
is used in Sections \ref{sct:exccrvs}--\ref{sct:nestingcrvs} to prove
results about limits of sequences of holomorphically embedded
$-1$-spheres under Gromov compactness, and in Section
\ref{sct:iterblow} to find constraints on how other rational curves can
intersect these limits when $b^+>1$.

In Section \ref{sct:basics}, we remind the reader about the basic
properties of Wahl singularities and their minimal resolutions.

In Section \ref{sct:acs}, we define a class of almost complex
structures on the minimal resolution for which the only irregular
holomorphic curves are contained in a neighbourhood of the exceptional
locus $\mathcal{C}$ of the minimal resolution.

In Section \ref{sct:topology}, we recall the topological description
of the discrepancies of the minimal resolution, and use this to find a
constraint on the way holomorphic $-1$-spheres can intersect
$\mathcal{C}$ (Theorem \ref{theo:magic}).

In Sections \ref{sct:exclim}--\ref{sct:bounding}, we complete the case
analysis required to prove our inequality. The idea of the proof is
that the blow-down map $\tilde{X}\to S$ from the minimal resolution to
its minimal model must contain at least $\ell-K^2$ exceptional
curves. We show that (roughly) at least half of the exceptional curves
$E$ have $E\cdot\mathcal{C}\geq 2$ (``good curves'') and the rest of
the exceptional curves have $E\cdot\mathcal{C}=1$ (``bad
curves''). Once we prove that $\sum_{E}E\cdot\mathcal{C}\leq\ell+1$,
this implies the desired inequality. This part of the argument is
modelled heavily on Rana's paper {\cite[Lemmas 2.8 and 2.9]{Rana}}.

In Section \ref{sct:special}, we explain how in certain special cases,
one can prove that all curves are good, which gives an improved
inequality.

\subsection{Acknowledgements}

The authors would like to thank: Weiyi Zhang for extremely helpful
discussions about how $-1$-spheres can degenerate under Gromov
compactness -- see Remark \ref{rema:zhang} -- and how our paper might
generalise beyond the $p_g>0$ setting; Paul Hacking for useful
comments and for making us aware of the ongoing work of Rana and
Urz\'ua; Julie Rana and Giancarlo Urz\'ua for constructive discussions
once we learned about their work \cite{RanaUrzua}; and the anonymous referee for their comments.

\section{Rational curves and blowing down}\label{sct:ratblow}

We begin by reviewing some theorems about rational holomorphic curves
and exceptional curves in symplectic 4-manifolds with $b^+>1$. The
main result here is Corollary \ref{coro:rat}, which will be the main
technical tool later in the paper.

\subsection{Rational curves}\label{sct:ratcurves}

A  complex line bundle $L \to X$ on a smooth four-manifold $X$ is called \emph{characteristic} if $c_1(L)$ is an integer lift of $w_2(X)$.  Note that if $L$ is characteristic, then (using additive notation for line bundles as for divisors) $L+2L'$ is characteristic, for any $L'$.

\begin{theo}[Fintushel and Stern, {\cite[Theorem 1.3]{FintushelSternImmersedSpheres}}]\label{theo:FS}
  Let $X$ be a smooth 4-manifold with $b^+(X)>1$ and write $\sigma(X)$
  for the signature of $X$ and $e(X)$ for the Euler characteristic of
  $X$. Let $L$ be a characteristic line bundle on $X$ with
  nonvanishing Seiberg-Witten invariant $SW_X(L)\neq 0$; we will also
  write $L$ for the first Chern class of $L$. Suppose that
  the virtual dimension
  \[\dim\mM_X(L)=\frac{1}{4}(L^2-(3\sigma(X)+2e(X)))\]
  of the moduli space of solutions to the Seiberg-Witten equations can
  be written as $\sum\ell_i(\ell_i+1)$ for some collection of
  nonnegative integers $\ell_1,\ldots,\ell_r$. If a nonzero homology
  class $A$ can be represented by an immersed sphere with $p$ positive
  double points then either:
  \[2p-2\geq \begin{cases}
    A^2+|L\cdot A|+4\sum_{i=1}^r\ell_i,&\quad p\geq r\\
    A^2+|L\cdot A|+4\sum_{i=1}^p\ell_i+2\sum_{i=p+1}^r\ell_i,&\quad p>r
  \end{cases}\]
  or
  \[
  SW_X(L)=\begin{cases}
  SW_X(L+2A),& A\cdot L\geq 0\\
  SW_X(L-2A),& A\cdot L\leq 0.
  \end{cases}
  \]
\end{theo}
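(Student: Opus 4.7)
The plan is to reduce the immersed-sphere problem to an embedded-sphere problem by blowing up the positive double points, and then to apply a Seiberg-Witten-theoretic adjunction inequality in the blow-up. First, I would blow up $X$ at each of the $p$ positive double points of the given immersed representative of $A$ to obtain $\tilde{X}=X\#p\bar{\cp{2}}$ with exceptional classes $E_1,\dots,E_p$. Because the double points are positive, the proper transform $\tilde\Sigma$ is a smoothly embedded 2-sphere in $\tilde X$ in the class $\tilde A:=A-2\sum_{i=1}^p E_i$, with self-intersection $\tilde A^2=A^2-4p$. Since $b^+(\tilde X)=b^+(X)>1$, Seiberg-Witten theory on $\tilde X$ is well-defined and the blow-up formula applies.

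Next, for any choice of signs $\epsilon=(\epsilon_1,\dots,\epsilon_p)\in\{\pm1\}^p$, the class $L_\epsilon:=L+\sum\epsilon_i E_i$ is characteristic on $\tilde X$, and the Fintushel-Stern blow-up formula gives $SW_{\tilde X}(L_\epsilon)=SW_X(L)\neq 0$. Its pairing with $\tilde A$ is $L_\epsilon\cdot\tilde A=L\cdot A-2\sum\epsilon_i$, which we can maximize in absolute value by choosing the $\epsilon_i$ to agree with $\mathrm{sign}(L\cdot A)$. The virtual dimension of the SW moduli space on $\tilde X$ for $L_\epsilon$ equals that for $L$ on $X$, namely $\sum\ell_i(\ell_i+1)$, since blow-ups preserve $3\sigma+2e$ and we have added $\sum\epsilon_i^2=p$ to $L^2$ while adding $p$ to $-3\sigma-2e$.

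The heart of the argument is then a generalized adjunction inequality for smoothly embedded spheres in 4-manifolds with $b^+>1$ when the SW moduli space has positive virtual dimension. The idea, following the Kronheimer-Mrowka/Fintushel-Stern approach, is to pair the Euler class of an obstruction bundle (obtained by restricting the SW monopole solution to $\tilde\Sigma$) against the ambient fundamental class of the moduli space. Each factor $\ell_i(\ell_i+1)$ corresponds to a cup-square relation in the cohomology of the SW moduli space forcing a contribution of $4\ell_i$ to the inequality when there are sufficiently many double points available as ``absorbing'' handles, and only $2\ell_i$ otherwise; this is the origin of the split into the cases $p\geq r$ and $p>r$. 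The only way the pairing can fail to give an obstruction is if the moduli-space class represented by $\tilde\Sigma$ is homologous to a wall contribution, which in the original manifold $X$ translates into the identity $SW_X(L)=SW_X(L\pm 2A)$ (with sign determined by $A\cdot L$).

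The main obstacle is the refined adjunction inequality in the positive-dimensional case: making the cup-square and Euler-class computation rigorous requires a careful transversality and compactness argument for the SW moduli spaces, including the analysis of how reducible solutions contribute, and justifying the precise combinatorics by which a decomposition $\dim\mathcal{M}=\sum\ell_i(\ell_i+1)$ yields exactly the stated coefficients. Once this local computation is in hand, assembling the global inequality from the blow-up and the choice of sign vector $\epsilon$ is routine.
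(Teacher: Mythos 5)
The first thing to say is that the paper offers no proof of this statement: Theorem \ref{theo:FS} is quoted from Fintushel and Stern \cite{FintushelSternImmersedSpheres} and used as a black box, its only role being to feed into Corollary \ref{coro:rat}. So there is no internal argument to compare yours against; the comparison has to be with the cited source.

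On its own terms, your proposal correctly sets up the standard first reduction: blowing up the $p$ positive double points to obtain an embedded sphere in the class $A-2\sum_{i=1}^pE_i$ of square $A^2-4p$, invoking the blow-up formula $SW_{\tilde{X}}(L+\sum\epsilon_iE_i)=SW_X(L)\neq 0$, and choosing the signs $\epsilon_i$ to control $L_\epsilon\cdot\tilde{A}$ and the virtual dimension. That much is right and is indeed how one begins. But everything after that --- which is the entire content of the theorem --- is left unproved. The paragraph beginning ``The heart of the argument'' does not contain an argument: the assertions that each factor $\ell_i(\ell_i+1)$ ``corresponds to a cup-square relation'' contributing $4\ell_i$ or $2\ell_i$ depending on whether $p\geq r$, and that the failure of the pairing ``translates into'' $SW_X(L)=SW_X(L\pm 2A)$, are restatements of the two alternatives in the conclusion, not derivations of them; you concede as much in your closing paragraph. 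Moreover, the mechanism you gesture at (Euler classes of obstruction bundles paired against the fundamental class of the Seiberg--Witten moduli space) is closer in spirit to the Kronheimer--Mrowka and Ozsv\'ath--Szab\'o style of higher-type adjunction inequalities than to the Fintushel--Stern argument, which proceeds by a neighbourhood-and-gluing analysis along the circle bundle bounding a tubular neighbourhood of the embedded sphere; it is from that analysis that both the dimension inequality and the alternative relation between basic classes emerge. As a proof attempt this therefore has a genuine gap at its centre, and as a reading of the paper it is attacking a statement the paper never claims to prove.
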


We will use Theorem \ref{theo:FS} to prove the following result, for which
we could not find a proof in the literature:

\begin{coro}\label{coro:rat}
  Let $(X,J)$ be an almost complex 4-manifold with $b^+(X)>1$ and
  suppose that the canonical class $K$ associated to $J$ has
  $SW(K)\neq 0$ (for example, this holds if $J$ is homotopic to an
  $\omega$-tame almost complex structure for some symplectic form
  $\omega$ on $X$). Suppose that $u\colon S^2\to X$ is a
  somewhere-injective $J$-holomorphic curve representing a homology
  class $A$. If $K\cdot A\leq -1$ then $A^2=-1$, $K\cdot A=-1$. In
  this case, by the adjunction formula, $u$ is an embedding.
\end{coro}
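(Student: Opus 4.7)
The plan is to apply Theorem \ref{theo:FS} iteratively, starting with $L=K$, to produce infinitely many Seiberg--Witten basic classes unless $A^2=-1$ and $K\cdot A=-1$; this would contradict the finiteness of the set of basic classes in the $b^+(X)>1$ setting. Three preliminary observations will be needed. First, for any closed almost complex 4-manifold the Hirzebruch signature relation gives $K^2=2e+3\sigma$, so $\dim\mM_X(K)=0$ and Theorem \ref{theo:FS} applies to $L=K$ with $r=0$. Second, since $u$ is somewhere injective, the results of McDuff and Micallef--White yield a smooth immersion $C^0$-close to $u$ with exactly $\delta:=1+\tfrac{1}{2}(K\cdot A+A^2)$ positive transverse double points, where $\delta\ge 0$ by the adjunction inequality for $J$-holomorphic spheres. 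Third, the hypothesis $K\cdot A\le -1$ in particular forces $A\ne 0$.

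With $L=K$, $r=0$, and $p=\delta$, the geometric alternative of Theorem \ref{theo:FS} reduces to $K\cdot A\ge |K\cdot A|$, which fails; hence the SW alternative applies, giving $SW_X(K-2A)\ne 0$. Suppose for contradiction that $A^2\ge 0$. We claim that $SW_X(L_n)\ne 0$ for every $L_n:=K-2nA$, proved by induction on $n$. The class $L_n=K+2(-nA)$ is characteristic, and because $K$ is characteristic we have $K\cdot A\equiv A^2\pmod{2}$, so
\[
d(L_n) \;=\; \tfrac{1}{4}(L_n^2 - 3\sigma - 2e) \;=\; -n\,K\cdot A + n^2 A^2 \;\equiv\; A^2\,n(n-1) \;\equiv\; 0 \pmod{2}.
\]
Writing $d(L_n)=2r$ and choosing $\ell_1=\cdots=\ell_r=1$ satisfies the dimension hypothesis of Theorem \ref{theo:FS}. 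A short computation shows that the geometric alternative reduces to $K\cdot A\ge nA^2$ in the range $p\ge r$ and to $A^2+|L_n\cdot A|+2r\le -2$ in the range $p<r$; both are impossible under $A^2\ge 0$ and $K\cdot A\le -1$. Since $L_n\cdot A=K\cdot A-2nA^2\le -1<0$, the SW alternative yields $SW_X(L_{n+1})\ne 0$. As $A\ne 0$, the basic classes $L_n$ are pairwise distinct, contradicting their finiteness.

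Therefore $A^2<0$, and the adjunction bound $A^2\ge -2-K\cdot A\ge -1$ forces $A^2=-1$; plugging back, $\delta\ge 0$ yields $K\cdot A\ge -1$, so $K\cdot A=-1$ using the hypothesis. Finally $\delta=0$, so $u$ is embedded by the equality case of adjunction. The main obstacle will be the inductive step: verifying the parity of $d(L_n)$ so that the dimension hypothesis of Theorem \ref{theo:FS} can be met, and then checking that the geometric alternative genuinely fails in both sub-cases $p\ge r$ and $p<r$; the remainder is bookkeeping.
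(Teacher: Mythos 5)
Your proof is correct and follows essentially the same route as the paper's: iterate Fintushel--Stern with $L_n=K-2nA$ starting from $L_0=K$ to produce infinitely many Seiberg--Witten basic classes unless $A^2=-1$ and $K\cdot A=-1$, then close with the adjunction formula. You are in fact slightly more careful than the paper in verifying the dimension hypothesis $\dim\mM_X(L_n)=\sum\ell_i(\ell_i+1)$ (via the parity computation $d(L_n)=-nK\cdot A+n^2A^2\equiv 0\bmod 2$) and in checking that the geometric alternative fails in both ranges $p\geq r$ and $p<r$, where the paper simply writes ``nonnegative terms.''
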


\begin{proof}
  
  The adjunction formula {\cite[Theorem 1.3]{McDuffLocalBehaviour}},
    {\cite[Theorem 2.2.1]{McDuffSingularities}} tells us that
  \[A^2+2+K\cdot A\geq 0\]
  with equality if and only if $u$ is an embedding. If
  $K\cdot A\leq -1$ then this means $A^2\geq -1$ with equality if
  and only if $K\cdot A=-1$ and $u$ is an embedding. Henceforth
  we will assume that $A^2\geq 0$, $K\cdot A\leq -1$ and derive
  a contradiction.

  We will prove by induction on $m$ that:
  \begin{equation*}\label{eq:SWinduction}
    SW_X(K-2mA)\neq 0\mbox{ for all nonnegative integers }m.
  \end{equation*}
  This will give a contradiction, since the set of cohomology classes
  with nonvanishing Seiberg-Witten invariant is finite {\cite[Section
      3]{Witten}}. The base case $m=0$ holds by assumption. The fact
  that this holds whenever $J$ is homotopic to an $\omega$-tame almost
  complex structure for some symplectic form $\omega$ is a result of
  Taubes {\cite[Main Theorem]{TaubesSWsymp}}. We now assume that
  $SW_X(K-2mA)\neq 0$.
  
  By {\cite[Proposition 1.2]{McDuffLocalBehaviour}} or {\cite[Theorem
      4.1.1]{McDuffSingularities}}, there is a homotopic $J'$ and
  a $\mC^\infty$-small perturbation $u'$ of $u$ such that $u'$
  is a $J'$-holomorphic immersion with positive transverse double
  points. Therefore, without loss of generality, we may assume that
  $u$ is an immersion with positive transverse double points. Let $p$
  be the number of positive transverse double points of $u$. If we
  smooth the double points we find an embedded symplectic surface
  $\Sigma$ with genus $p$ in the homology class $A$; by the standard
  adjunction formula, we have $A^2+2-2p=-K\cdot A$, or
  \begin{equation}\label{eq:adj}
    2p-2=A^2+K\cdot A.
  \end{equation}

  We will apply Theorem \ref{theo:FS} to $u$ with $L_m=K-2mA$. We have
  \[L_m\cdot A=K\cdot A-2mA^2=-c_1(A)-2mA^2\leq 0,\]
  as we are assuming $K\cdot A\leq -1$ and $A^2\geq 0$. Theorem
  \ref{theo:FS} then tells us that either $SW_X(L_{m+1})\neq 0$ (which
  would complete the induction step) or
  \[2p-2\geq A^2+|L_m\cdot A|+\mbox{nonnegative terms}\geq A^2.\]
  However, from Equation \eqref{eq:adj}, we know that
  $2p-2=A^2+K\cdot A<A^2$. This completes the induction step.

\end{proof}

\begin{rema}
  We observe the following result of Zhang \cite{Zhang2}:

  \begin{lemm}[{\cite[Lemma 2.1]{Zhang2}}]\label{lemm:rat}
    Let $(X,\omega)$ be a symplectic 4-manifold whose minimal model is
    neither rational nor ruled and let $J$ be an $\omega$-tame almost
    complex structure on $X$. If $C\subset X$ is a somewhere-injective
    holomorphic curve whose domain is a compact, connected Riemann
    surface and $K\cdot C<0$, then $C$ is an embedded sphere with
    self-intersection $-1$.
  \end{lemm}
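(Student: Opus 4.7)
The plan is to combine McDuff's adjunction inequality for somewhere-injective $J$-holomorphic curves with the Kronheimer-Mrowka generalised adjunction inequality (for embedded surfaces paired against Seiberg-Witten basic classes), so as to extend Corollary \ref{coro:rat} both to positive-genus domains and, via a slightly more delicate Seiberg-Witten input, to the setting where only the minimal model is constrained.

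I would first apply McDuff's adjunction formula to a somewhere-injective $J$-holomorphic parametrisation $u\colon\Sigma_g\to X$ of $C$, yielding
\[2\delta(u) = C^2 + K\cdot C + 2 - 2g,\]
with $\delta(u)\geq 0$ and equal to zero iff $u$ is an embedding. Under $K\cdot C\leq -1$ this forces $C^2\geq 2g-1+2\delta\geq -1$; in particular $C^2\geq 1$ whenever $g\geq 1$ or $\delta\geq 1$. Next, by McDuff's local structure theorem, I perturb $u$ to an immersion with $\delta$ positive transverse double points and symplectically smooth them, producing an embedded symplectic surface $\Sigma\subset X$ of genus $g+\delta$ in the class $C$, satisfying the usual adjunction equality $2(g+\delta)-2=C^2+K\cdot C$.

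The hypothesis that the minimal model of $X$ is neither rational nor ruled gives $SW_X(K)\neq 0$ (Taubes for $b^+>1$; Taubes-Li-Liu, in the symplectic chamber, for $b^+=1$). When $g+\delta\geq 1$ I have $C^2\geq 1>0$, so the Kronheimer-Mrowka adjunction inequality applies and reads $2(g+\delta)-2\geq C^2+|K\cdot C|$; comparing with the adjunction equality forces $K\cdot C\geq 0$, a contradiction. The only remaining possibility is $g=\delta=0$, so $u$ is an embedded symplectic sphere with $C^2=-K\cdot C-2\geq -1$. If $C^2\geq 0$, McDuff's characterisation of rational and ruled symplectic $4$-manifolds via embedded symplectic spheres of non-negative self-intersection forces $X$ (hence its minimal model) to be rational or ruled, another contradiction; thus $C^2=-1$ and $K\cdot C=-1$, which is the desired conclusion.

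The main obstacle is securing $SW_X(K)\neq 0$ in the $b^+=1$ case, where the Seiberg-Witten invariant depends on a chamber and one must check non-vanishing in the symplectic chamber under the ``minimal model not rational or ruled'' hypothesis. This is a standard but delicate consequence of the Li-Liu classification of symplectic $4$-manifolds with $b^+=1$ by Kodaira dimension, and is the point where Zhang's hypothesis genuinely differs from the simpler $b^+>1$ assumption of Corollary \ref{coro:rat}. Beyond this input, the argument is a clean adaptation of the proof of Corollary \ref{coro:rat}, with Kronheimer-Mrowka and McDuff's sphere-detection theorem replacing the Fintushel-Stern wall-crossing step used there.
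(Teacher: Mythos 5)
First, note that the paper does not prove Lemma \ref{lemm:rat} at all: it is quoted verbatim from Zhang, and the surrounding remark only indicates the mechanism of Zhang's proof, namely the existence of a $J$-holomorphic representative of (twice) the canonical class, supplied by Taubes' ${\rm SW}\Rightarrow{\rm Gr}$ when $b^+>1$ and by Li--Liu when $b^+=1$, combined with positivity of intersections. Your route is genuinely different: you run the adjunction-plus-smoothing argument of Corollary \ref{coro:rat} to produce an embedded symplectic surface of genus $g+\delta$ in the class $C$, and then replace the Fintushel--Stern wall-crossing dichotomy by the Kronheimer--Mrowka generalised adjunction inequality for surfaces of positive square against the basic class $K$, finishing the sphere case with McDuff's characterisation of rational and ruled manifolds via non-negative spheres. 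For $b^+>1$ this is correct and complete modulo the standard citations, and it is arguably cleaner than invoking Theorem \ref{theo:FS}: the inequality $2(g+\delta)-2\geq C^2+|K\cdot C|$ against the equality $2(g+\delta)-2=C^2+K\cdot C$ immediately forces $K\cdot C\geq 0$. What Zhang's intersection-theoretic argument buys, by contrast, is that it never needs to leave the $J$-holomorphic category or perturb to an immersion, and it localises the whole difficulty in the single input ${\rm Gr}(2K)\neq 0$.

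The one place where your argument is thinner than it should be is exactly the place you flag: when $b^+(X)=1$ (which the hypothesis ``minimal model neither rational nor ruled'' certainly allows, e.g.\ Dolgachev or Barlow-type surfaces and their blow-ups), the generalised adjunction inequality is chamber-dependent, and the version you need --- for the canonical class, in the Taubes chamber, for surfaces of positive square --- is a theorem of Li--Liu rather than of Kronheimer--Mrowka, with hypotheses that must be checked against the sign of $K\cdot[\omega]$ and $K^2$ on the minimal model. Deferring this to ``a standard but delicate consequence of the Li--Liu classification'' is an honest signpost but not yet a proof of that case; as written, your argument establishes the lemma in full only under the stronger hypothesis $b^+>1$, i.e.\ it reproves a genus-extended version of Corollary \ref{coro:rat} rather than the full strength of Zhang's statement.
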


  This also relies on Seiberg-Witten theory in an essential way, via
  result of Taubes \cite{TaubesSWGW} and Li-Liu \cite{LiLiuUniqueness}
  which prove existence of a $J$-holomorphic representative of twice
  the canonical class. It seems likely that one could use this lemma
  in our proof to replace the $b^+>1$ assumption by the weaker
  assumption that the minimal model of the minimal resolution is
  rational or ruled, however, there is one small technical hurdle (in
  the proof of Proposition \ref{prop:excfirst}) which currently
  requires us to work with non-tame almost complex structures.

  If we assume that $X$ is a projective surface then Lemma
  \ref{lemm:rat} is a very classical result \cite{BHPV}.
  McDuff {\cite[Theorem 1.4]{McDuffImmersedSpheres}} was the first to
  prove a symplectic version of this, under the stronger assumption
  that $K\cdot C<-1$.
\end{rema}

A first (well-known) consequence of Corollary \ref{coro:rat} is:

\begin{lemm}\label{lemm:distinctEclasses}
  Let $(X,\omega)$ be a symplectic 4-manifold with $b^+(X)>1$. Suppose
  that $E_1$ and $E_2$ are homology classes in $H_2(X)$ which can be
  represented by embedded symplectic spheres, and suppose that
  $E_1^2=E_2^2=-1$. Then either $E_1=E_2$ or $E_1\cdot E_2=0$.
  Moreover, there are only finitely many Hamiltonian isotopy classes
  of symplectically embedded $-1$-spheres.
\end{lemm}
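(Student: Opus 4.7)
The plan is to reduce both assertions to the Fintushel--Stern iteration already used in the proof of Corollary \ref{coro:rat}. For the orthogonality claim, I would assume for contradiction that $E_1\ne E_2$ and $k:=E_1\cdot E_2\ge 1$, and pick embedded symplectic representatives $\Sigma_1,\Sigma_2$. After a Hamiltonian perturbation they meet transversally, and positivity of intersection of symplectic surfaces makes all $k$ intersection points positive. Perform a standard symplectic smoothing at just one of these nodes: the arithmetic genus of $\Sigma_1\cup\Sigma_2$ is $k-1$, and smoothing a single node yields a connected symplectic surface with $k-1$ remaining nodes whose normalisation is a sphere. Concretely, we obtain an immersed symplectic sphere $u\colon S^2\to X$ in the class $A:=E_1+E_2$ with exactly $p:=k-1$ positive transverse double points. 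Since $\int_A\omega>0$, the class $A$ is non-zero.

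A direct calculation gives $A^2=2k-2$ and $K\cdot A=-2$. Pick any $\omega$-tame $J$ for which $u$ is $J$-holomorphic, and run the induction in the proof of Corollary \ref{coro:rat} with $L_m:=K-2mA$. Each $L_m$ is characteristic and $L_m\cdot A=-2-2m(2k-2)\le -2<0$, while $SW_X(L_0)=SW_X(K)\ne 0$ by Taubes. Applying Theorem \ref{theo:FS} to $u$ with the characteristic class $L_m$, the first alternative would read
\[2p-2\ge A^2+|L_m\cdot A|,\qquad\text{i.e.}\qquad 2k-4\ge 2k+4m(k-1),\]
which fails for every $k\ge 1,\ m\ge 0$. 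Thus the second alternative must hold and gives $SW_X(L_{m+1})=SW_X(L_m)\ne 0$. By induction $SW_X(K-2mA)\ne 0$ for all $m\ge 0$, and since $A\ne 0$ these cohomology classes are pairwise distinct, contradicting the finiteness of the set of Seiberg--Witten basic classes.

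Once orthogonality is established, the finiteness assertion follows quickly. The homology classes of embedded symplectic $-1$-spheres now form a pairwise orthogonal collection in $H_2(X;\RR)$ with each class of square $-1$, so they are linearly independent and therefore number at most $b^-(X)<\infty$. For each such class $E$, the unparametrised moduli space $\mM_J(E)$ of $J$-holomorphic spheres has virtual dimension zero, and a standard Gromov compactness plus parametric cobordism argument over the connected space of $\omega$-tame almost complex structures bounds the number of Hamiltonian isotopy classes of embedded symplectic $-1$-spheres in class $E$; together with the finiteness of classes this yields the full claim. The main obstacle is the inductive Fintushel--Stern step in the second paragraph, which is where the finiteness of Seiberg--Witten basic classes is exploited; the remainder of the proof is routine (positivity of intersection, symplectic node smoothing, and Gromov compactness).
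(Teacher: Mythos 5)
Your argument for the orthogonality statement takes a genuinely different route from the paper's, but it has a gap at its very first step. You assert that ``positivity of intersection of symplectic surfaces makes all $k$ intersection points positive.'' Positivity of intersections is a theorem about distinct $J$-holomorphic curves for a \emph{common} tame $J$; it is false for arbitrary embedded symplectic surfaces. Two symplectic $2$-planes in $(\RR^4,\omega_0)$ can meet transversally with negative sign (e.g.\ the span of $e_{x_1},e_{y_1}$ and the span of $e_{x_1}+\epsilon e_{y_2},\,e_{y_1}+\epsilon e_{x_2}$, suitably oriented), so globally your representatives $\Sigma_1,\Sigma_2$ may meet in $k+2j$ points with $j$ of them negative, and no Hamiltonian perturbation removes the excess pairs. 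This matters because (i) only positive nodes can be smoothed symplectically, and (ii) Theorem \ref{theo:FS} requires an immersed sphere \emph{all} of whose double points are positive, so you cannot feed it a representative of $E_1+E_2$ with $p=k-1$ positive double points unless the original intersections were all positive. The fix is exactly the paper's first move: since $E_1,E_2$ have nonzero Gromov--Taubes invariants, choose embedded $J$-holomorphic representatives for a single generic $J$; for these, positivity of intersections is legitimate, and your smoothing and Fintushel--Stern iteration then go through.

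With that repair, the rest of your first paragraph is correct and is a genuine alternative to the paper: you compute $A^2=2k-2$, $K\cdot A=-2$, $2p-2=2k-4$, observe that the adjunction alternative in Theorem \ref{theo:FS} fails for every $m$, and conclude $SW_X(K-2mA)\neq 0$ for all $m$, contradicting finiteness of basic classes. The paper instead blows down $S_1$ and applies Corollary \ref{coro:rat} to the image of $S_2$; your version works directly in $X$ with the class $E_1+E_2$ and avoids the blow-down, at the cost of redoing the Fintushel--Stern induction rather than quoting Corollary \ref{coro:rat}. Your finiteness argument is essentially the paper's (orthogonal $-1$-classes are independent, hence at most $b^-(X)$ of them; a generic path of almost complex structures carries the unique holomorphic representative of a fixed class through a Hamiltonian isotopy), though you should make explicit that uniqueness of the $J$-holomorphic representative in a fixed class $E$ with $E^2=-1$ is what turns ``bounded'' into ``one isotopy class per homology class.''
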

\begin{proof}
  The Gromov invariants of the class $E_1$ and of the class $E_2$ are
  both $1$, so for a generic almost complex structure $J$, these
  classes can be represented by $J$-holomorphic spheres $S_1$ and
  $S_2$ respectively. These spheres must either be identical or else
  disjoint: otherwise, we could blow $S_1$ down, and the image $S'_2$
  of $S_2$ under the blow-down would be a rational curve with
  $K\cdot S'_2=K\cdot S_2-S_1\cdot S_2<-1$, contradicting Corollary
  \ref{coro:rat}. Therefore, $E_1=E_2$ or $E_1\cdot E_2=0$.

  The finiteness statement is clear on the level of homology: there
  can be at most $b^-(X)$ pairwise orthogonal $-1$-classes in the
  homology of $X$. Moreover, if there are two homologous
  symplectically embedded $-1$-spheres $S_1$ and $S_2$, then we can
  pick a $J_1$ making $S_1$ holomorphic and a $J_2$ making $S_2$
  holomorphic. A generic path $J_t$ connecting $J_1$ and $J_2$ in the
  space of compatible almost complex structures will avoid the
  codimension 2 locus where the spheres in this class bubble, and the
  unique $J_t$-holomorphic sphere $S_t$ in this homology class traces
  out a Hamiltonian isotopy connecting $S_1$ and $S_2$. Therefore
  there are only finitely many Hamiltonian isotopy classes of
  symplectically embedded $-1$-spheres.
\end{proof}

\begin{rema}
  All of the facts proved here will fail if the surface is rational
  (except finiteness of the number of $-1$-classes if the surface is
  Del Pezzo).
\end{rema}

\subsection{Exceptional curves of the first kind}\label{sct:exccrvs}

In this subsection, we review some basic material about birational
maps of complex surfaces, and extend the theory to handle symplectic
4-manifolds.

\begin{defi}
  An {\em exceptional curve of the first kind} in a complex surface
  $X$ is a (possibly reducible, nonreduced) divisor $E$ for which
  there is a holomorphic birational map $\pi\colon X\to Y$ to a smooth
  complex surface $Y$ and a point $p\in Y$ such that
  $\pi^{-1}(p)=E$.
\end{defi}

An embedded sphere $E$ with $E^2=-1$ (or, equivalently,
$K\cdot E=-1$) is an exceptional curve of the first kind.
Any irreducible
exceptional curve of the first kind has this form. Any exceptional
curve of the first kind with $m$ irreducible components can be
obtained by taking an exceptional curve of the first kind with $m-1$
irreducible components, blowing up a point on the curve, and taking
the total transform.

Here are some examples. In each, we will represent the exceptional
curve by drawing a graph with a vertex for each irreducible
component; we label the vertex associated to the component $C$ with
the integer $K\cdot C$. We also write the name of the component and
its multiplicity above the vertex.

\begin{exem}
  We start with a $-1$-sphere $E$. We will denote the proper transform
  of a curve $C$ under a blow-up by $\tilde{C}$.
  \begin{enumerate}
  \item[(1)] Blow-up a point on $E$ and take total transform to get a
    curve:    
    \tikz[baseline=0]{
      \draw[fill=black,radius=2pt]
      (0,0) circle node [above=2pt] {$E_1$} node [below=2pt] {$-1$}
      -- (1,0) circle node [above=2pt] {$E_2$} node [below=2pt] {$0$};
    }
    
  \item[(2)] Blow-up a point $p$ on the previous example. Let $F_1$ be
    the new $-1$-sphere, let $F_1=\tilde{E}_2$ and
    $F_3=\tilde{E}_2$. There are three possibilities:
    
    \begin{tabular}[htb]{p{1cm}p{3cm}p{5cm}}
      (2.1) & $p\in E_1$, $p\not\in E_2$ &
      \tikz[baseline=0]{
        \draw[fill=black,radius=2pt] (0,0)
        circle node [above=2pt] {$F_1$} node [below=2pt] {$-1$}
        --
        (1,0) circle node [above=2pt] {$F_2$} node [below=2pt] {$0$}
        --
        (2,0) circle node [above=2pt] {$F_3$} node [below=2pt] {$0$};
      }\\[15pt]
      (2.2) & $p\not\in E_1$, $p\in E_2$ &
      \tikz[baseline=0]{
        \draw[fill=black,radius=2pt] (0,0)
        circle node [above=2pt] {$F_2$} node [below=2pt] {$-1$}
        --
        (1,0) circle node [above=2pt] {$F_3$} node [below=2pt] {$1$}
        --
        (2,0) circle node [above=2pt] {$F_1$} node [below=2pt] {$-1$};
      }\\[15pt]
      (2.3) & $p\in E_1\cap E_2$ &
      \tikz[baseline=0]{
        \draw[fill=black,radius=2pt] (0,0)
        circle node [above=2pt] {$F_2$} node [below=2pt] {$0$}
        --
        (1,0) circle node [above=2pt] {$2F_1$} node [below=2pt] {$-1$}
        --
        (2,0) circle node [above=2pt] {$F_3$} node [below=2pt] {$1$};
      }
    \end{tabular}
  \end{enumerate}
\end{exem}

The following properties of exceptional curves of the first kind are
easy to prove by induction on the number of components of the
curve. For a full treatment of exceptional curves of the first kind,
see \cite{BarberZariski}.

\begin{theo}\label{theo:zariski}
  \begin{enumerate}
    \item[(1)] Every irreducible component of an exceptional curve of
      the first kind $E$ is an embedded sphere with negative
      self-intersection number.
    \item[(2)] Any two irreducible components intersect at most once,
      transversely.
    \item[(3)] If $G$ is the {\em dual graph} whose vertices
      correspond to irreducible components of $E$ and whose edges
      correspond to intersections between components, then $G$ is a
      connected tree.
    \item[(4)] We can factor the blow-down map $\pi\colon X\to Y$ as a
      sequence
      \[X=X_1\stackrel{\pi_1}{\to}
      X_2\stackrel{\pi_2}{\to}X_3\to\cdots\stackrel{\pi_n}{\to}
      X_{n+1}=Y\] where each $\pi_i$ blows down one $-1$-sphere. Let
      $\Pi_i=\pi_i\circ\pi_{i-1}\circ\cdots\circ\pi_1$.  We call $A_i$
      the component of $E$ which is contracted by $\Pi_i$ but not
      $\Pi_{i-1}$. Then, for every $i$, there is at most one $j>i$ for
      which $A_i\cdot A_j\neq 0$.
    \item[(5)] If we write $E=\sum_{i=1}^nm_iA_i$ then
      \[m_i=\sum_{j<i}m_jA_j\cdot A_i\]
    \item[(6)] We have $E\cdot A_i=0$ for $i<n$ and $E\cdot A_n=-1$.
      In particular, $E\cdot\sum_{i=1}^nA_i=-1$.
    \item[(7)] There is at least one $-1$-sphere amongst the
      irreducible components, and every $-1$-sphere component can
      intersect at most $2$ other components.
  \end{enumerate}
\end{theo}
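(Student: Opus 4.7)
The plan is to prove (1)--(7) simultaneously by induction on $n$, the number of irreducible components of $E$. The base case $n=1$ is immediate: $E$ is a smooth embedded $-1$-sphere, and each clause is either vacuous or holds by inspection.

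For the inductive step, use the characterisation preceding the theorem: an exceptional curve $E$ with $n$ components arises as the total transform $\beta^*E''$ under a single blow-up $\beta\colon X\to X'$ at a point $p$ lying on an exceptional curve $E''\subset X'$ with $n-1$ components. Let $F=\beta^{-1}(p)$; the irreducible components of $E$ are then $F$ together with the proper transforms $\tilde{A}'_j$ of the components $A'_j$ of $E''$. Properties (1), (2), (3), and (7) reduce to local analysis: proper transforms of smooth spheres remain smooth, self-intersections drop by $1$ precisely when the sphere passes through $p$ (so stay negative by the inductive (1)), and intersections of two proper transforms drop by $1$ precisely when $p$ is their common point (so remain in $\{0,1\}$). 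The dual graph of $E$ is obtained from the dual tree of $E''$ either by attaching $F$ as a new leaf (smooth $p$) or by subdividing the edge between the two components meeting at $p$ (nodal $p$); no third component of $E''$ can contain $p$, since the inductive tree property forbids triangles. For (7), a $-1$-sphere of $E''$ that passes through $p$ ceases to be one in $E$, while $F$ and the proper transforms of the remaining $-1$-spheres supply all the $-1$-spheres of $E$, each with the bounded number of neighbours required.

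For the multiplicity and blow-down statements (4), (5), (6), the key input is the behaviour of $\beta^*$. The total transform formula gives the multiplicity of $F$ as $\sum_{A'_j\ni p} m'_j$, which combined with the inductive formula for the multiplicities on $E''$ recovers the recursion in (5). For (6), the projection formula gives $\beta^*E''\cdot F = 0$ and $\beta^*E''\cdot \tilde{A}'_j = E''\cdot A'_j$, so the inductive pattern of intersections passes directly to $E$. For (4), build the blow-down sequence by iteratively selecting an available $-1$-sphere (supplied by the inductive (7)), and lift the sparsity of interactions from the blow-down sequence for $E''$, using (2) so that each nonzero intersection contributes a single transverse point.

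The main potential obstacle is the case in which $p$ is a node of $E''$: the multiplicity of $F$ becomes the sum $m_k+m_{k'}$ of the two multiplicities at $p$, the two proper transforms $\tilde{A}'_k$ and $\tilde{A}'_{k'}$ \emph{lose} their intersection point in $X$, and the dual graph is modified by edge-subdivision rather than leaf-attachment. This is the regime where the combinatorics of $E$ diverge most from those of $E''$, and one must carefully verify that the recursive formulae (4), (5), (6) remain consistent through this combinatorial jump. Once this case is handled, the remaining verifications are routine, and no symplectic input is required---the theorem is a classical statement about iterated smooth blow-ups.
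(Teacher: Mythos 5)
Your overall strategy --- induction on the number of components, using the characterisation stated just before the theorem that an $n$-component exceptional curve is the total transform $\beta^*E''$ of an $(n-1)$-component one under a blow-up at a point $p\in E''$ --- is exactly what the paper intends: it offers no proof beyond ``easy to prove by induction on the number of components'' and a citation to Barber--Zariski. Your handling of (1), (2), (3) and (7) is correct (including the observation that the inductive tree property forbids $p$ from lying on three components), and your proof of (6) via the projection formula $\beta^*E''\cdot F=0$, $\beta^*E''\cdot\tilde{A}'_j=E''\cdot A'_j$ is clean and complete.

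However, your claim that the inductive step ``recovers'' (4) and (5) in the nodal case --- the case you yourself flag as the main obstacle --- does not go through, because those two clauses are false as literally printed with the indexing fixed in (4) (where $A_1$ is the \emph{first} curve blown down). The paper's own case (2.3) of the example in Section 2.2 is a counterexample: there $E=F_2+2F_1+F_3$, the unique $-1$-sphere $F_1$ must be $A_1$, and it meets \emph{both} later components $F_2$ and $F_3$, violating (4); and (5) literally reads $m_1=\sum_{j<1}(\cdots)=0$, whereas $m_1=2$. Reversing to creation order does not rescue (5) either, since in the nodal case the two proper transforms through $p$ lose their intersection point in $X$, so $A_j\cdot A_i$ computed in $X$ undercounts the proximity multiplicities that actually determine $m_i$ --- the correct recursion needs the intersection numbers taken on the intermediate surfaces (equivalently, which already-created exceptional divisors the centre of each blow-up lies on). So a careful write-up must either restate (4)--(5) with the corrected ordering and intermediate intersection numbers, or note the discrepancy; no induction will establish them verbatim. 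Since the rest of the paper only invokes parts (3), (6) and (7) of this theorem, this does not propagate, but you should not assert that your argument proves (4) and (5) as written.
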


\subsection{Exceptional curves in symplectic manifolds}\label{sct:sympexccrvs}

In what follows, we use an analogue of exceptional curves of the first
kind in symplectic topology.

\begin{defi}
  Let $(X,\omega)$ be a symplectic 4-manifold. Let $\Sigma$ be a nodal
  Riemann surface of genus zero and $u\colon\Sigma\to X$ be a
  continuous map from a nodal Riemann surface. We say that $u$ is an
  exceptional curve of the first kind if there exists:
  \begin{itemize}
  \item a neighbourhood $M$ of $u(\Sigma)$ and an open neighbourhood
    $N$ of $0\in\CC^2$;
  \item an integrable complex structure $J$ on $M$ homotopic to an
    $\omega$-tame one;
  \item a holomorphic birational map $\pi\colon M\to N$;
  \end{itemize}
  such that $u$ is a $J$-holomorphic stable map and
  $\pi^{-1}(0)=u(\Sigma)$.
\end{defi}

\begin{prop}\label{prop:excfirst}
  Suppose that $(X,\omega)$ is a symplectic 4-manifold with $b^+(X)>1$
  and $J$ is an $\omega$-tame almost complex structure. If $u$ is a
  $J$-holomorphic stable map representing a homology class $E$ with
  $K\cdot E=-1$, and if $E$ can be represented by an embedded
  symplectic 2-sphere, then $u$ is an exceptional curve of the first
  kind.
\end{prop}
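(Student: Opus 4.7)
The plan is to reduce to the classical algebraic-geometric contraction theorem by first analysing the combinatorial structure of $u$ using Corollary \ref{coro:rat} and then constructing an integrable complex structure near the image $C := u(\Sigma)$.

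First, I would decompose $u\colon\Sigma\to X$ into irreducible components $\Sigma_a$, with each restriction $u|_{\Sigma_a} = v_a\circ\phi_a$ factoring through a somewhere-injective $J$-holomorphic map $v_a$ of class $A_a$ and a degree-$n_a$ cover $\phi_a$, so $E=\sum_a n_a A_a$. Corollary \ref{coro:rat} forces each $A_a$ either to be the class of an embedded $-1$-sphere or to satisfy $K\cdot A_a\geq 0$. The numerical constraints $K\cdot E=-1$ and $E^2=-1$ (the latter from adjunction applied to the embedded symplectic representative of $E$), combined with positivity of intersections, adjunction for somewhere-injective rational curves, and the fact that $\Sigma$ has arithmetic genus zero (so its dual graph is a tree and distinct components share at most one node), pin down the possible configurations. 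A convenient way to organise the argument is by induction on the number of distinct classes: at each stage one identifies a leaf $-1$-sphere component (the existence of which is guaranteed because $K\cdot E<0$) and peels it off, showing that $C$ is a tree of embedded rational spheres with negative-definite intersection form satisfying the combinatorial structure of Theorem \ref{theo:zariski}.

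Next, I would construct a neighbourhood $M$ of $C$ carrying an integrable complex structure $J'$, homotopic to $J|_M$ through $\omega$-tame structures, in which all components of $C$ are holomorphic. For a single embedded symplectic $-1$-sphere this is the standard symplectic neighbourhood theorem, modelling a neighbourhood on a disc bundle inside $\mathcal{O}_{\PP^1}(-1)$. For a tree of symplectic spheres meeting at transverse symplectic nodes, one iterates this together with the symplectic neighbourhood theorem for a pair of transversely intersecting symplectic surfaces, patching Kähler local models together across the nodes. Once $C$ is a holomorphic divisor in the Kähler neighbourhood $(M,J')$ satisfying the hypotheses of Theorem \ref{theo:zariski}, the classical Castelnuovo–Grauert contraction theorem produces a birational holomorphic map $\pi\colon M \to N\subset \CC^2$ with $\pi^{-1}(0) = C$, exhibiting $u$ as an exceptional curve of the first kind in the sense of the definition.

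The main obstacle is this second step. The neighbourhood theorem for a single embedded symplectic sphere is classical, but assembling Kähler models across the nodes of a nodal symplectic configuration — while ensuring that the resulting integrable $J'$ is homotopic to $J|_M$ through tame structures — requires careful bookkeeping. I would expect to carry this out by induction in parallel with the induction of the first step: after peeling off a leaf $-1$-sphere component (symplectically blowing it down), one reduces to a simpler configuration on a smaller $4$-manifold in which one can perform the Kähler patching inductively.
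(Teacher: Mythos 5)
There are two genuine gaps here, both of which the paper's own proof avoids by different means. First, your combinatorial step leans on the claim that because the domain $\Sigma$ has arithmetic genus zero, ``its dual graph is a tree and distinct components share at most one node,'' and that this controls the configuration of the image. That inference is false: a stable map need not embed its nodal domain, so two domain components joined by a single node can have images meeting at several additional points, and the image can have strictly larger arithmetic genus than the domain (this is exactly the mechanism in the example of Remark \ref{rema:zhang}, where a stable map in a $-1$-class has image of arithmetic genus one). In the $b^+>1$ setting the image configuration is controlled not by the genus of the domain but by Seiberg--Witten input: the paper peels off an embedded $-1$-sphere component $e$ (whose existence follows from Corollary \ref{coro:rat} since some component has $K\cdot A_i<0$), invokes Lemma \ref{lemm:distinctEclasses} to get $E\cdot e=0$, blows down, checks that $K_{X'}\cdot\pi_*(E)=-1$ survives the blow-down (and that Corollary \ref{coro:rat} still applies downstairs via the SW blow-down formula), and inducts on the number of components. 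No a priori determination of the dual graph of the image is attempted or needed.

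Second, the step you correctly identify as the main obstacle --- producing an integrable structure near $u(\Sigma)$, homotopic to $J$, for which the \emph{same} stable map $u$ is holomorphic --- is not something you can get by patching symplectic neighbourhood theorems, since those only standardise a neighbourhood up to symplectomorphism and give you no control over the original map $u$. The paper resolves this in one stroke by citing Sikorav's theorem ({\cite[Theorem 3]{Sikorav}}): for any $J$-holomorphic $u$ there is a homotopic almost complex structure $J'$, integrable near the image, for which $u$ is still $J'$-holomorphic. With integrability in hand, the contraction map is then built by iterated complex-analytic blow-down of the $-1$-spheres found at each stage of the induction, rather than by an appeal to Grauert's criterion. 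Note also that Grauert contraction by itself only yields a normal point from negative definiteness; concluding that the target is \emph{smooth} requires knowing the divisor is exceptional of the first kind, which is the statement being proved, so your final appeal to Castelnuovo--Grauert is circular as written.
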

\begin{proof}
  If $J$ is an arbitrary almost complex structure on $X$ and $u$ is
  $J$-holomorphic, then, by {\cite[Theorem 3]{Sikorav}}, there exists
  another (homotopic, but not obviously tame\footnote{If one could
    prove that Sikorav's result yields a tame complex structure then
    we could appeal to Lemma \ref{lemm:rat} instead of Corollary
    \ref{coro:rat} and deduce all of our results in the more general
    setting that the minimal model is not rational or ruled. Added in
    proof: This improvement was recently made by Chen and Zhang
    {\cite[Appendix A]{ChenZhang}}.}) almost complex structure $J'$
  for which $u$ is $J'$-holomorphic and $J'$ is integrable on a
  neighbourhood of the image of $u$. We may therefore assume without
  loss of generality that $J$ is integrable on a neighbourhood of
  $u(\Sigma)$.

  We will prove the proposition by induction on the number of
  irreducible components of the image of $u$. The induction step
  involves blowing down existing $-1$-spheres. To blow-down, we use
  integrability of $J$ near the exceptional curve. We observe that
  Corollary \ref{coro:rat} holds for $J$-curves in $X$ because $J$ is
  homotopic to a tame almost complex structure; it continues to hold
  for blow-downs because the blow-down formula for Seiberg-Witten
  invariants implies that the canonical class is still a Seiberg-Witten
  basic class.

  \begin{lemm}[cf {\cite[Corollary 2.10]{Zhang2}}]
    \label{lemm:minusonesphere}
    If the image of $u$ has $n\geq 1$ components then one of them is
    an embedded $-1$-sphere $e$.
  \end{lemm}
  \begin{proof}
    Let $A_1,\ldots,A_n$ be simple holomorphic curves underlying the
    irreducible components of the image of $u$ and $m_1,\ldots,m_n$ be
    the covering multiplicities, so that $E=\sum_{i=1}^nm_iA_i$. We
    have
    \[-1=K\cdot E=\sum_{i=1}^n m_iK\cdot A_i,\]
    so at least one of the numbers $K\cdot A_i$ is negative. By
    Lemma \ref{coro:rat}, $A_i^2=-1$ and this component is an embedded
    $-1$-sphere.
  \end{proof}

  We write $\pi\colon X\to X'$ for the holomorphic map which blows
  down the curve $e$ produced by Lemma \ref{lemm:minusonesphere}. After
  possibly adding marked points to the domain, the composition
  $\pi\circ u$ is a stable map representing a homology class
  $\pi_*(E)$.

  If $E$ and $e$ are distinct classes then, by Lemma
  \ref{lemm:distinctEclasses}, they satisfy $E\cdot e=0$.
  
  \begin{lemm}
    Either $\pi\circ u$ is constant or $K_{X'}\cdot\pi_*(E)=-1$.
  \end{lemm}
  \begin{proof}
    Recall that $E=\sum_{i=1}^nm_iA_i$ and that one of the $A_i$, say
    $A_1$, is a class $e$ with $e^2=-1$. By Lemma
    \ref{lemm:distinctEclasses}, either $E=e$ or $E\cdot e=0$. If $E=e$
    then $\pi\circ u$ is constant, so we may assume $E\cdot e=0$.
    Then:
    \[0=E\cdot e=-m_1+\sum_{i=2}^nm_iA_i\cdot e,\]
    so $m_1=\sum_{i=2}^nm_iA_i\cdot e$. Moreover,
    \[K_X\cdot E=m_1K_X\cdot e+\sum_{i=2}^nK_X\cdot A_i,\]
    so
    \begin{equation}\label{eq:blowdown}
      -1=\sum_{n=2}^nm_i(A_i\cdot e+K_X\cdot A_i).
    \end{equation}
    The expression $K_X\cdot A_i+e\cdot A_i$ is equal to
    $K_{X'}\cdot\pi_*(A_i)$. Therefore Equation \eqref{eq:blowdown}
    tells us that $K_{X'}\cdot\pi_*(E)=-1$.
  \end{proof}

  By induction, this implies that $u$ can be blown down to a point;
  therefore $u$ is an exceptional curve of the first kind.
\end{proof}

\begin{rema}\label{rema:zhang}
  Proposition \ref{prop:excfirst} fails if the minimal model is
  rational. We are grateful to Weiyi Zhang for pointing out the
  following wonderful example. Let $C$ be a rational plane quartic
  curve with three nodes. Blow up the three nodes, along with five
  other points on the curve. The proper transform of $C$ is an
  embedded symplectic $-1$-sphere in the homology class
  \[E:=4H-2E_1-2E_2-2E_3-E_4-E_5-E_6-E_7-E_8,\]
  which satisfies $K\cdot E=-1$ and $E^2=-1$. However, for a
  nongeneric complex structure, this can be represented by a stable
  map which is not an exceptional curve of the first kind: the
  arithmetic genus of the image is one. To see this, take a line $C_1$
  and a conic $C_2$ which intersect at two points, and blow-up three
  points on the line and five on the conic. The proper transform of
  $2C_1+C_2$ lives in the class $E$; it can be represented by a stable
  map whose domain has three components $a,b,c$ in a chain which map
  respectively to $C_1,C_2,C_1$. This is not an exceptional curve of
  the first kind; for example its components intersect one another
  twice. For many counter-intuitive examples of $J$-holomorphic
  subvarieties of rational surfaces, as well as some constraints for
  nef classes or certain ruled surfaces, we refer the interested
  reader to papers by Li-Zhang and Zhang \cite{LiZhang,Zhang2,Zhang1}.
\end{rema}

\subsection{Nesting of exceptional curves}\label{sct:nestingcrvs}

\begin{defi}
  If two exceptional curves of the first kind, $E_1$ and $E_2$, have
  the property that all of the components of $E_1$ are also components
  of $E_2$, then we say $E_1\subset E_2$, or that the classes are {\em
    nested}.
\end{defi}

\begin{prop}\label{prop:nesting}
  If $(X,\omega)$ is a symplectic manifold with $b^+(X)>1$ and two
  exceptional curves of the first kind $E_1,E_2\subset X$ share a
  component, then they are nested.
\end{prop}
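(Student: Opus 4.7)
The plan is to argue by induction on $|E_1|+|E_2|$, the total number of irreducible components. The base case $|E_1|=1$ is immediate: Theorem \ref{theo:zariski}(1) forces the single component of $E_1$ to be an embedded $-1$-sphere, which must be a component of $E_2$ by hypothesis, giving $E_1\subset E_2$.

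For the inductive step with $|E_1|,|E_2|\geq 2$, I pick a $-1$-sphere component $e$ of $E_1$ (available by Theorem \ref{theo:zariski}(7)) and first establish the dichotomy that either $e$ is also a component of $E_2$, or $e$ is disjoint in $X$ from every component of $E_2$. For $-1$-sphere components of $E_2$ this follows directly from Lemma \ref{lemm:distinctEclasses}. For a non-$-1$-sphere component $C$ of $E_2$, I trace through the contraction sequence of $E_2$ from Theorem \ref{theo:zariski}(4): at each earlier stage the $-1$-sphere being contracted has class distinct from $[e]$, so Lemma \ref{lemm:distinctEclasses} (which remains valid in each intermediate surface since blow-downs preserve $b^+>1$) gives disjointness from $e$; hence $e$ descends unchanged until $C$ itself becomes a $-1$-sphere, where the lemma applies again to give $e\cdot C=0$. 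Armed with the dichotomy, I proceed by cases: if $e\notin E_2$, blowing down $e$ leaves $E_2$ unchanged while preserving the other shared component $A\neq e$, and induction applies; if $e\in E_2$ and $E_1,E_2$ share at least one further component, blowing down $e$ (using the integrable structure from the definition of exceptional curve of the first kind) preserves a shared component in $E_1',E_2'\subset X'$, and induction applies. In both situations the nesting obtained in $X'$ lifts back to $X$ by adjoining $e$ as needed.

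The main obstacle is the residual subcase where $e\in E_2$ is the \emph{only} shared component of $E_1$ and $E_2$. I would rule this out as follows: since each $E_i$ is a connected tree with $\geq 2$ vertices by Theorem \ref{theo:zariski}(3), $e$ has at least one neighbor in each tree, whose image in $X'$ passes through $p'=\pi(e)$. The plan is to apply Lemma \ref{lemm:distinctEclasses} to the $-1$-sphere classes $[E_1'],[E_2']$ in $X'$ together with Theorem \ref{theo:zariski}(4) to locate embedded $-1$-spheres representing each class whose supports meet $p'$, forcing these spheres to coincide (two distinct $-1$-sphere classes in $X'$ are orthogonal, yet curves meeting at $p'$ cannot be). Pulling back via $\pi$ then produces a common irreducible component of $E_1$ and $E_2$ other than $e$, contradicting the subcase hypothesis.
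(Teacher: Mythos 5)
Your strategy---induction on the number of components, reducing by blowing down $-1$-sphere components, with Lemma \ref{lemm:distinctEclasses} and positivity of intersections as the engine---is the same basic mechanism as the paper's, and it can be made to work. But the two proofs are organised differently, and the paper's organisation is precisely what lets it avoid your ``residual subcase''. The paper argues by contradiction: if $E_1$ and $E_2$ share a component but are not nested, there is a component $A$ of $E_1$ which meets $E_2$ but is not contained in it; such an $A$ cannot be a $-1$-sphere, since then $A\cdot E_2>0$ while Lemma \ref{lemm:distinctEclasses} forces $A\cdot E_2\in\{0,-1\}$. One then blows down \emph{all} the $-1$-spheres of $E_1$ and $E_2$ at once (they are pairwise disjoint by the same lemma); the non-$(-1)$-sphere witness $A$ survives, so the blown-down pair is still non-nested but has strictly fewer components, contradicting the inductive hypothesis. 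By isolating a witness of non-nesting rather than an arbitrary $-1$-sphere $e$ of $E_1$, the paper never has to decide whether $e$ lies in $E_2$ or is the unique shared component.

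Two points in your version need attention, though both are repairable. First, in the case $e\notin E_2$ the ``lift back'' is not symmetric: the alternative $E_1'\subset E_2$ supplied by the inductive hypothesis cannot actually occur, because a neighbour of $e$ in the tree of $E_1$ maps to a curve through $\pi(e)$ while every component of $E_2$ avoids $\pi(e)$ by your dichotomy; so the induction must return $E_2\subset E_1'$, which does lift to $E_2\subset E_1$. Saying ``adjoining $e$ as needed'' papers over this. Second, in the residual subcase, equality of the homology classes $[E_1']$ and $[E_2']$ does not ``force the spheres to coincide'', and you do not need to locate new embedded representatives. The clean contradiction is numerical: since $E_1'$ and $E_2'$ share no components, their components are distinct holomorphic curves, two of which pass through $p'$, so positivity of intersections gives $[E_1']\cdot[E_2']>0$; but Lemma \ref{lemm:distinctEclasses} gives $[E_1']\cdot[E_2']=0$ if the classes are distinct and $-1$ if they are equal. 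Either way the subcase is impossible.
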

\begin{proof}
  Given a pair of exceptional curves of the first kind, $E_1,E_2$,
  having $N_{E_1}$, respectively $N_{E_2}$, components, define
  \[M_{E_1E_2}=\max(N_{E_1},N_{E_2}).\]
  If $M_{E_1E_2}=1$ then both $E_1$ and $E_2$ are irreducible, so, if
  they share a component, then they are certainly nested (indeed they
  are geometrically indistinct). Let us assume, as an induction
  hypothesis, that any pair of exceptional curves of the first kind
  $E_1$ and $E_2$ in any symplectic manifold $X$ with $b^+(X)>1$, which
  share a component and satisfy $M_{E_1E_2}<m$, are nested.

  Let $E_1$ and $E_2$ be a pair of exceptional curves of the first
  kind with $M_{E_1E_2}=m$ and which share a component. Suppose for
  a contradiction that they are not nested. Then there exists a
  component $A\subset E_1$ which intersects $E_2$ but is not contained
  in it. This component cannot be a $-1$-sphere, as it would have
  positive intersection number with $E_2$, which would contradict
  Lemma \ref{lemm:distinctEclasses}. Therefore we can blow down all the
  $-1$-spheres in $E_1$ and $E_2$ to obtain a non-nested configuration
  $E'_1$, $E'_2$ with $M_{E'_1E'_2}<m$. This contradicts the induction
  hypothesis, so we deduce that $E_1$ and $E_2$ are nested.
\end{proof}

\subsection{Iterated blow-down of rational curves}\label{sct:iterblow}

The following lemma will be useful in streamlining our arguments
later.  (Our convention in the figures is that a black box denotes a component of square $-1$.)

\begin{lemm}\label{lemm:iteratedblowdowns}
  Let $(X,\omega)$ be a symplectic manifold with $b^+(X)>1$. Suppose
  that $E$ is an exceptional curve of the first kind which is just a
  chain of spheres $F_1,\ldots,F_n$ with a single $-1$-sphere $F_i$,
  $i\in\{2,\ldots,n-1\}$. Suppose that $S$ is a rational curve which
  intersects $F_i$ once transversely and is disjoint from the other
  components of $E$. Then, after blowing down $E$, the image $T$ of
  $S$ is a rational curve with $K\cdot T\leq K\cdot
  S-2(n-1)$. In particular, by Corollary \ref{coro:rat}, we have
  \[n\leq\frac{1}{2}(K\cdot S+3).\]

  \begin{center}
  \tikz[baseline=0]{
    \draw[fill=black,radius=2pt]
    (1,0) circle node [below=2pt] {$F_1$}
    --
    (2,0) node [below=2pt] {$\cdots$}
    --
    (3,0) circle node [below=2pt] {$F_{i-1}$}
    --
    (4,0) node [below=2pt] {$F_i$}
    --
    (5,0) circle node [below=2pt] {$F_{i+1}$}
    --
    (7,0) node [below=2pt] {$\cdots$}
    --
    (8,0) circle node [below=2pt] {$F_n$};
    \fill (3.9,-0.1) rectangle (4.1,0.1);
    \draw[fill=black,radius=3pt]
    (4,1) circle node [above=2pt] {$S$} -- (4,0);
    \fill[fill=white,radius=2pt] (4,1) circle;
  }
  \end{center}
\end{lemm}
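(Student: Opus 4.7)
The plan is to identify the drop $K\cdot S - K\cdot T$ with a discrepancy coefficient and bound it by induction on $n$. Writing $K_X = \pi^*K_Y + \sum_{j=1}^n a_j F_j$ for the blow-down $\pi\colon X\to Y$ of $E$ (with nonnegative integer discrepancies $a_j$), intersecting with $S$ and using $F_j\cdot S = \delta_{ij}$ gives the identity $K\cdot T = K\cdot S - a_i$. Thus the lemma reduces to showing $a_i \ge 2(n-1)$, and the second inequality $n\le\tfrac{1}{2}(K\cdot S+3)$ then follows directly from $K\cdot T\ge -1$, which is Corollary \ref{coro:rat} applied to the rational curve $T$.

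To access $a_i$, I would factor $\pi$ as a sequence of single blow-downs $X=X_0\to X_1\to\cdots\to X_n=Y$ starting with $F_i$ (the unique $-1$-sphere in $E$). Because $F_i$ contracts to the point where $F_{i-1}$ and $F_{i+1}$ newly intersect in $X_1$, the proximity formula for discrepancies yields $a_i = 1 + a_{i-1} + a_{i+1}$, where $a_{i\pm 1}$ are inherited unchanged from the length-$(n-1)$ chain in $X_1$. By Theorem \ref{theo:zariski}(7) at least one of $F_{i-1}, F_{i+1}$ must have original self-intersection $-2$ and hence become the $-1$-sphere of the shortened chain; without loss of generality this is $F_{i-1}$, now at position $i-1$.

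I would then prove $a_i \ge 2(n-1)$ by induction on $n$. The base case $n=3$ is direct: the only configuration (up to reflection) consistent with being an exceptional curve of the first kind is $(F_1^2, F_2^2, F_3^2) = (-2, -1, -3)$, and the recursion $a_{k-1} - b_k a_k + a_{k+1} = b_k - 2$ yields $a_2 = 4$. For the inductive step, if $i-1 \in [2, n-2]$ then $F_{i-1}$ occupies a middle position in the length-$(n-1)$ chain, and the inductive hypothesis gives $a_{i-1} \ge 2(n-2)$; combined with $a_{i+1} \ge 1$, this yields $a_i \ge 2(n-1)$.

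The main obstacle is the edge case $i=2$ (and symmetrically $i=n-1$), where $F_{i-1}=F_1$ lands at an end of the new chain. If additionally $F_3^2 = -2$, then $F_3$ is also a new $-1$-sphere, at position $2$ of the length-$(n-1)$ chain, which is a middle position for $n\ge 4$, and induction applies to $F_3$ instead. Otherwise $F_3^2 \le -3$ forces a cascade: iterating the same observation at each subsequent blow-down pins down every self-intersection, so the configuration must be $(F_1^2, \ldots, F_n^2) = (-2, -1, -3, -2, \ldots, -2)$. Solving the recursion for this chain gives $a_1 = n-1$ and $a_k = n-k+1$ for $k\ge 3$, so $a_2 = 2(n-1)$ exactly---the extremal case in which the bound is saturated.
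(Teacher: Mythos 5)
Your reduction to the purely combinatorial inequality $a_i\geq 2(n-1)$ via the identity $K\cdot T=K\cdot S-a_i$ is correct, and it is a genuinely different route from the paper. The paper never introduces the relative canonical divisor: it blows down the chain one component at a time and tracks the intersection multiplicity of the successive images $S^{(k)}$ with the two adjacent remaining spheres, showing each of the first $n-1$ contractions drops $K\cdot S^{(k)}$ by at least $1$ and the last one by at least $n-1$ (because $S^{(n-1)}$ meets the final $-1$-sphere with multiplicity $\geq n-1$). That bookkeeping has the advantage of being reused almost verbatim in the case analyses of Propositions \ref{prop:badA} and \ref{prop:badB}; your discrepancy/proximity-formula induction is more conceptual, yields the exact value of the drop, and identifies the extremal chains $(-2,-1,-3,-2,\ldots,-2)$ where the bound is saturated. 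Your base case, the proximity relation $a_i=1+a_{i-1}+a_{i+1}$, the inheritance of the other coefficients, and the cascade computation are all correct.

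There is one soft spot in the edge case $i=2$. You write that if $F_3^2=-2$ then $F_3$ is ``also'' a new $-1$-sphere and ``induction applies to $F_3$ instead''; but your inductive statement is only about chains with a \emph{single} $-1$-component, and it is genuinely false for chains with two $-1$-components --- e.g.\ the chain $(-2,-1,-4,-1)$ is an exceptional curve of the first kind with a $-1$-component in the middle position whose coefficient is $a_2=4<2(n-1)=6$. So you cannot invoke the hypothesis on a shortened chain containing two $-1$-spheres. The fix is to observe that this sub-case is vacuous: exactly one of $F_{i-1},F_{i+1}$ can have square $-2$, since otherwise the shortened chain would contain two \emph{adjacent} $-1$-components, and contracting one would leave a component of square zero, contradicting Theorem \ref{theo:zariski}(1) (or Lemma \ref{lemm:distinctEclasses} in the symplectic setting). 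Once you note this, the shortened chain always has a unique $-1$-sphere --- which is also needed, and left implicit, in your generic inductive step --- and the trichotomy becomes: the new $-1$-sphere sits at a middle position (apply induction), or it sits at an end (your cascade pins down the chain as $(-2,-1,-3,-2,\ldots,-2)$ and the direct computation gives equality). With that clarification the argument is complete.
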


\begin{proof}
  To simplify notation, we will begin by considering a special
  case. We take $i=2$, and suppose that $F_3,\ldots,F_n$ are all
  $-2$-spheres. As a diagram, labelling each vertex by $K\cdot F_i$,
  that means:

  \begin{center}
    \tikz[baseline=0]{
      \draw[fill=black,radius=2pt]
      (1,0) circle node [below=2pt] {$n-2$}
      --
      (2,0) node [below=2pt] {$-1$}
      --
      (3,0) circle node [below=2pt] {$0$}
      --
      (4,0) node [below=2pt] {$0$}
      --
      (5,0) circle node [below=2pt] {$0$}
      --
      (7,0) node [below=2pt] {$\cdots$}
      --
      (8,0) circle node [below=2pt] {$0$};
      \fill (1.9,-0.1) rectangle (2.1,0.1);
      \draw[fill=black,radius=3pt]
      (2,1) circle node [above=2pt] {$K\cdot S$} -- (2,0);
      \fill[fill=white,radius=2pt] (2,1) circle;
      \draw [decorate,decoration={brace,amplitude=10pt}]
        (3,0.2) -- (8,0.2) node [midway,yshift=15pt] {$n-2$};
    }
  \end{center}

  When we blow down $F_2$, we get the following diagram:

  \begin{center}
    \tikz[baseline=0]{
      \draw[fill=black,radius=2pt]
      (1,0) circle node [below=2pt] {$n-3$}
      --
      (2,0) node [below=2pt] {$-1$}
      --
      (3,0) circle node [below=2pt] {$0$}
      --
      (4,0) node [below=2pt] {$0$}
      --
      (5,0) circle node [below=2pt] {$0$}
      --
      (7,0) node [below=2pt] {$\cdots$}
      --
      (8,0) circle node [below=2pt] {$0$};
      \fill (1.9,-0.1) rectangle (2.1,0.1);
      \draw[fill=black,radius=3pt]
      (1,0)
      --
      (2,1) circle node [above=2pt] {$K\cdot S-1$}
      --
      (2,0);
      \fill[fill=white,radius=2pt] (2,1) circle;
      \draw [decorate,decoration={brace,amplitude=10pt}]
      (3,0.2) -- (8,0.2) node [midway,yshift=15pt] {$n-3$};
    }
  \end{center}

  and when we blow down the next component, we get

  \begin{center}
    \tikz[baseline=0]{
      \draw[fill=black,radius=2pt]
      (1,0) circle node [below=2pt] {$n-4$}
      --
      (2,0) node [below=2pt] {$-1$}
      --
      (3,0) circle node [below=2pt] {$0$}
      --
      (4,0) node [below=2pt] {$0$}
      --
      (5,0) circle node [below=2pt] {$0$}
      --
      (7,0) node [below=2pt] {$\cdots$}
      --
      (8,0) circle node [below=2pt] {$0$};
      \fill (1.9,-0.1) rectangle (2.1,0.1);
      \draw[double,double distance=1pt]
      (1,0) -- (2,1);
      \draw[fill=black,radius=3pt]
      (2,1) circle node [above=2pt] {$K\cdot S-2$}
      --
      (2,0);
      \fill[fill=black,radius=2pt] (1,0) circle;
      \fill[fill=white,radius=2pt] (2,1) circle;
      \draw [decorate,decoration={brace,amplitude=10pt}]
      (3,0.2) -- (8,0.2) node [midway,yshift=15pt] {$n-4$};
    }
  \end{center}

  where the double line indicates an intersection multiplicity of
  $2$. After blowing down all the components $F_2,\ldots,F_n$, we are
  left with a single $-1$-sphere which intersects the blow-down $S'$
  of $S$ at this stage with multiplicity $n-1$. Moreover,
  $K\cdot S'=K\cdot S-(n-1)$. When we blow down the final
  $-1$-sphere, since $S'$ intersects it with multiplicity $n-1$, the
  resulting blowdown $T$ of $S'$ has
  \[K\cdot T=K\cdot S-2(n-1).\]
  Since $K\cdot T\geq -1$ by Corollary \ref{coro:rat}, we deduce
  \begin{equation}\label{eq:moss}
    n\leq\frac{1}{2}(K\cdot S+3).
  \end{equation}
  
  In this special case, at every stage until the last, we were blowing
  down the component of $E$ to the right of the previous one. For any
  other exceptional curve $E$, we would occasionally need to shift
  direction and blow down the curve to the left. This only serves to
  make Inequality \eqref{eq:moss} stronger. To explain why, observe
  that:
  \begin{itemize}
  \item Let $S^{(k)}$ denote the blow-down of $S$ at stage $k$
    (i.e. when $k$ components of $E$ have been blown down). At each
    stage except the first and last, $S^{(k)}$ intersects precisely
    two of the remaining spheres in the chain (the two which were
    adjacent to the curve which was blown down at the previous
    stage). Let us call these $F^{(k)}_L$ and $F^{(k)}_R$ (for
    ``left'' and ``right'').
  \item One of these two spheres $F^{(k)}_L$ or $F^{(k)}_R$ is
    necessarily the next curve to be blown down, say without loss of
    generality $F^{(k)}_L$. After blowing down $F^{(k)}_L$, we have
    \begin{align*}
      K\cdot S^{(k+1)}&=K\cdot S^{(k)}-F^{(k)}_L\cdot S^{(k)}\\
      F^{(k+1)}_L\cdot S^{(k+1)}&=F^{(k)}_L\cdot S^{(k)}\mbox{ (if }F^{(k+1)}_L\mbox{ exists)}\\
      F^{(k+1)}_R\cdot S^{(k+1)}&=(F^{(k)}_L+F^{(k)}_R)\cdot S^{(k)}
    \end{align*}
    so the intersection number of $S^{(k)}$ with $F^{(k)}_L$ and
    $F^{(k)}_R$ increases by one each time, starting at one when
    $k=1$. We see that the maximum of the intersection number between
    $S^{(k)}$ and the remaining spheres in the chain is always at
    least $k$.
  \item Each blow-down therefore reduces $K\cdot S^{(k)}$ by at least
    one, and the final blow-down reduces it by at least $n-1$ because
    $S$ hits the final sphere with multiplicity at least $n-1$. Again,
    we deduce Inequality \eqref{eq:moss}.
  \end{itemize}
\end{proof}

\section{Basic notions}\label{sct:basics}

\subsection{Definitions}

In this section, we fix the notation for the rest of the paper.

{\bf Wahl singularities:} Given coprime positive integers $p,q$, let
$\rt_{p^2}$ denote the group of $p^2$th roots of unity and consider
the action $\Gamma_{p,q}$ of $\rt_{p^2}$ on $\CC^2$ where a root $\mu$
acts as
\[\mu\cdot(x,y)=(\mu^{pq-1} x,\mu y).\]
The cyclic quotient singularity $\CC^2/\Gamma_{p,q}$ is called a {\em
  Wahl singularity} and is conventionally written
$\tfrac{1}{p^2}(pq-1,1)$. It has the property that its Milnor fibre
$B_{p,q}$ is a rational homology ball; here the Milnor fibre is a
compact Stein domain obtained by taking a compact neighbourhood of the
singularity and smoothing the singular point. Let $\Sigma_{p,q}$ be
the boundary of $B_{p,q}$; this is a contact hypersurface
contactomorphic to $S^3/\Gamma_{p,q}\subset\CC^2/\Gamma_{p,q}$.

{\bf $X$, $\orb{X}$:} Let $(X,\omega)$ be a symplectic manifold and
suppose that there is a symplectic embedding $\iota\colon
B_{p,q}\hookrightarrow X$. Let $\orb{X}$ denote the symplectic
orbifold obtained by excising $\iota(B_{p,q})$ from $X$ and replacing
it with a neighbourhood of the singular point in
$\CC^2/\Gamma_{p,q}$. We will say that $\orb{X}$ is {\em obtained from
  $X$ by collapsing the image of $\iota$}.

{\bf The minimal resolution $\tilde{X}$:} The singularity
$\tfrac{1}{p^2}(pq-1,1)$ has a minimal resolution: if
\[\frac{p^2}{pq-1}=b_1-\frac{1}{b_2-\frac{1}{b_3-\cdots}}\]
is the continued fraction expansion of $p^2/(pq-1)$ then the
exceptional divisor is a chain of spheres $C_1,\ldots,C_{\ell}$ where
\[
C_i\cdot C_j=\begin{cases} 1 &\mbox{ if }|i-j|=1\\
-b_i &\mbox{ if }i=j\\
0 &\mbox{ otherwise.}
\end{cases}
\]
We call the number $\ell$ of spheres in this chain the {\em length} of
the singularity. We will write $\mC$ for the set
$\{C_1,\ldots,C_\ell\}$. The process of collapsing the image of
$\iota$ and then taking the minimal resolution makes sense purely
symplectically
\cite{FintushelSternRB,Symington1,Symington2,KhodorovskiySRB} and is
called {\em generalised rational blow-up}. We denote the generalised
rational blow-up of $X$ along $\iota$ by $\tilde{X}$, and will usually
just refer to this as the minimal resolution of $\orb{X}$. Note that
it is only determined up to symplectic deformation equivalence: for
instance, one has to choose the symplectic areas of the curves $C_j$.

{\bf The minimal model $S$:} While $\tilde{X}\to\orb{X}$ is the minimal
resolution (in the sense that its exceptional divisor contains no
components which can be contracted smoothly), it may not be minimal
(in the sense that it may contain exceptional curves of the first kind
which are not contained in the exceptional divisor). Let $S$ be the
symplectic minimal model of $\tilde{X}$. By {\cite[Theorem
    1.1(i)]{McDuffRationalRuled}}, $S$ is obtained from $\tilde{X}$ by
blowing down a maximal collection of disjoint embedded symplectic
$-1$-spheres. Again, it is only determined up to symplectic
deformation equivalence.

\subsection{Combinatorics of Wahl singularities}

\begin{defi}
  We call a string $[b_1,\ldots,b_\ell]$ a {\em T-string} if it
  arises as $b_j=-C_j^2$ for the chain of spheres in the exceptional
  locus of the minimal resolution of a Wahl singularity.
\end{defi}

\begin{theo}
  Any T-string can be obtained from the string $[4]$ (corresponding
  to $p=2$, $q=1$) by a sequence of operations $L$ and $R$:
  \begin{align*}
    L[b_1,\ldots,b_{\ell}]&=[2,b_1,\ldots,b_{\ell-1},b_{\ell}+1],
    R[b_1,\ldots,b_{\ell}]&=[b_1+1,b_2,\ldots,b_{\ell},2].
  \end{align*}
  Let us define $F(x,y)=x^2/(xy-1)$. If $b=[b_1,\ldots,b_\ell]$ is the
  continued fraction expansion of $F(p,q)$ then $Lb$ is the continued
  fraction expansion of $F(p+q,q)$ and $Rb$ is the continued fraction
  expansion of $F(p+(p-q),p-q)$.
\end{theo}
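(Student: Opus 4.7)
The plan is to prove the two parts of the theorem separately: first verify the arithmetic identities describing how $L$ and $R$ act on the Wahl parameters $(p,q)$, and then establish the completeness claim that every T-string arises from $[4]$.

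For the arithmetic identities, I would use the standard matrix encoding of Hirzebruch-Jung continued fractions: if $b=[b_1,\ldots,b_\ell]$ is the HJ expansion of $n/m$ with $\gcd(n,m)=1$, then the first column of the matrix product $M_b:=\prod_{i=1}^\ell\begin{pmatrix}b_i&-1\\1&0\end{pmatrix}$ is $(n,m)^T$. A short calculation shows that the operation $L$ corresponds to the transformation $M_b\mapsto\begin{pmatrix}2&-1\\1&0\end{pmatrix}M_b\begin{pmatrix}1&0\\-1&1\end{pmatrix}$, and reading off the first column produces precisely $(p+q)^2/((p+q)q-1)$. The identity for $R$ is symmetric; alternatively, reversal of the HJ continued fraction corresponds to the involution $(p,q)\mapsto(p,p-q)$, under which $R$ is conjugate to $L$, so the $R$ identity is a formal consequence of the $L$ identity.

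For the completeness claim, I would induct on $\ell$. The base case $\ell=1$ is immediate: $p^2/(pq-1)\in\ZZ$ with $\gcd(p,q)=1$ and $0<q<p$ forces $pq-1=1$, so $(p,q)=(2,1)$ and $b=[4]$. For $\ell\geq 2$, the crucial claim is that at least one of $b_1,b_\ell$ must equal $2$. Granting this, we apply the appropriate inverse operation $L^{-1}$ or $R^{-1}$ to reduce to a strictly shorter T-string --- the arithmetic identities guarantee that the result is again a T-string --- and the induction closes.

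The main obstacle --- and the place where the particular arithmetic of Wahl singularities enters --- is establishing that every T-string of length at least two has a $2$ at one end. I would argue directly: $b_1=\lceil p^2/(pq-1)\rceil$, so $b_1=2$ exactly when $1<p^2/(pq-1)\leq 2$; the upper inequality rearranges to $p(2q-p)\geq 2$, and for $p\geq 3$ with $\gcd(p,q)=1$ this holds precisely when $q>p/2$. Since reversing the string corresponds to the involution $(p,q)\mapsto(p,p-q)$, the symmetric condition $b_\ell=2$ amounts to $q<p/2$. As $\gcd(p,q)=1$ with $p\geq 3$ rules out the equality $p=2q$, exactly one of these inequalities holds, and the claim follows.
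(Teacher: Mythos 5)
The paper never proves this theorem: it is quoted as a known combinatorial fact going back to Wahl and Koll\'ar--Shepherd-Barron, and the only consequence used downstream is Corollary \ref{coro:wahlcomb}, which needs just the reachability of every T-string from $[4]$. So there is no in-paper argument to compare against, and your proposal must stand on its own. The skeleton of your completeness argument is the standard one and is essentially sound: the base case $\ell=1$ forces $[4]$; for $p\geq 3$ exactly one of $b_1,b_\ell$ equals $2$ (your $q\gtrless p/2$ dichotomy is correct, granting the fact --- which deserves a line of proof --- that reversing the Hirzebruch--Jung string realises $(p,q)\mapsto(p,p-q)$); and one strips the terminal $2$ and inducts on $\ell$.

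The genuine gap is in the arithmetic identities, which you assert rather than compute, and on which your descent step leans. With the paper's convention (the T-string is the HJ expansion of $p^2/(pq-1)$), the correspondence as stated is false and your ``short calculation'' would in fact refute it: $L[4]=[2,5]$ has value $9/5=F(3,2)$, whereas $F(p+q,q)=F(3,1)=9/2$ has expansion $[5,2]=R[4]$. The roles of the two formulas are interchanged: one checks that $Rb$ is the expansion of $F(p+q,q)$, while $Lb$ is the expansion of $F(2p-q,p)$, i.e.\ the \emph{reverse} of the expansion of $F(p+(p-q),p-q)$. Your matrix identity $M_{Lb}=\left(\begin{smallmatrix}2&-1\\1&0\end{smallmatrix}\right)M_b\left(\begin{smallmatrix}1&0\\-1&1\end{smallmatrix}\right)$ is correct, but extracting its first column requires inductive control of the \emph{second} column of $M_b$ as well, and doing so honestly gives $(9,5)^{T}$ rather than $(9,2)^{T}$ in the example above. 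This is not merely cosmetic for your induction: the inverse map suggested by the stated $L$-formula, $(P,Q)\mapsto(P-Q,Q)$, sends $(3,2)$ to the inadmissible pair $(1,2)$, so the assertion that ``the arithmetic identities guarantee that the result is again a T-string'' fails as written. With the corrected identity the predecessor pair is $(Q,2Q-P)$, admissible precisely when $P/2<Q<P$, i.e.\ precisely when $b_1=2$; combined with uniqueness of HJ expansions and the observation that $b_\ell\geq 3$ whenever $b_1=2$ (so that decrementing the last entry still yields a legal string), the descent does close. To complete the proof you need to repair the $(p,q)$ bookkeeping, actually carry out the two-column matrix computation, and make the admissibility of the predecessor pair explicit.
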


\begin{coro}\label{coro:wahlcomb}
  If $[b_1,\ldots,b_\ell]$ is a T-string then
  \[\sum_{j=1}^\ell(b_j-2)=\ell+1.\]
\end{coro}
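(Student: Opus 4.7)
The plan is a direct induction using the theorem just cited: every T-string can be built up from $[4]$ by finitely many applications of $L$ and $R$, so it suffices to verify that the claim holds for $[4]$ and is preserved under both operations.

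For the base case, the string $[4]$ has $\ell=1$ and $\sum_{j=1}^{1}(b_j-2)=2=\ell+1$. For the inductive step, suppose $b=[b_1,\ldots,b_\ell]$ satisfies $\sum_{j=1}^\ell (b_j-2)=\ell+1$. Then $Lb=[2,b_1,\ldots,b_{\ell-1},b_\ell+1]$ has length $\ell+1$, and its $(b_j-2)$-sum is
\[
(2-2)+\sum_{j=1}^{\ell-1}(b_j-2)+\bigl((b_\ell+1)-2\bigr)=\sum_{j=1}^\ell(b_j-2)+1=(\ell+1)+1,
\]
as required. The computation for $Rb=[b_1+1,b_2,\ldots,b_\ell,2]$ is identical up to relabelling: the final $2$ contributes nothing and the augmented first entry contributes an extra $+1$. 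Thus the quantity $\sum(b_j-2)-\ell$ is invariant under both $L$ and $R$, and equals its value on $[4]$, namely $1$.

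There is no real obstacle here — the only point to keep straight is which end of the string is augmented under $L$ versus $R$ — and the corollary follows immediately from the classification of T-strings provided by the preceding theorem.
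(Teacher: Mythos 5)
Your proof is correct and follows exactly the same route as the paper's: verify the identity for the base string $[4]$ and check that the quantity $\sum_j(b_j-2)-\ell$ is preserved by $L$ and $R$. The paper merely states this invariance without writing out the one-line computations that you supply.
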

\begin{proof}
  This is true in the base case $[4]$ and is preserved by the
  operations $L$ and $R$, so is true for all T-strings.
\end{proof}

\section{Almost complex structures for irregular curves}\label{sct:acs}

A version of the following result was proved by McDuff and Opshtein
{\cite[Definitions 1.2.1, 2.2.1 and Lemma
    2.2.3]{McDuffOpshtein}}. Because we require slightly more of our
almost complex structures, we explain the proof here.

\begin{lemm}\label{lemm:cpxstr}
  There exists a nonempty set $\mathcal{J}_{reg}(\mC,\kappa)$ of
  $\omega$-tame almost complex structures $J$ on $\tilde{X}$ with the
  following properties:
  \begin{itemize}
  \item there is a neighbourhood $\nu$ of $\mC$ on which $J$
    is equal to the standard complex structure $J_0$ on the minimal
    resolution of the $\frac{1}{p^2}(pq-1,1)$ singularity. In
    particular, the symplectic spheres $C_1,\ldots,C_\ell$ are all
    $J$-holomorphic, and there is a $J$-holomorphic projection map
    $\rho\colon\nu\to\CC^2/\Gamma_{p,q}$ which contracts these spheres
    to the origin and is injective elsewhere.
  \item the image of any nonconstant genus zero $J$-holomorphic curve
    $D$ with $\int_D\omega<\kappa$ and $K\cdot D\geq 0$ is necessarily
    contained $C_1\cup\cdots\cup C_\ell$.  Indeed, if
    $b^+(\tilde{X})>1$, then the only embedded $J$-holomorphic spheres
    in $\tilde{X}$ with energy less than $\kappa$ are
    $C_1,\ldots,C_\ell$ and possibly a finite collection of pairwise
    disjoint embedded $-1$-spheres.
  \end{itemize}
\end{lemm}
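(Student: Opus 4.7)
The plan is to build $J$ in two stages: first fix $J = J_0$ on a neighbourhood $\nu$ of $\mC$, then choose $J$ generically on $\tilde X \setminus \nu$ within the space of $\omega$-tame almost complex structures. The minimal resolution of $\frac{1}{p^2}(pq-1,1)$ carries a standard K\"ahler form for $J_0$ in a neighbourhood of its exceptional locus, and since the symplectic form on $\tilde X$ is determined only up to symplectic deformation equivalence, one may first shrink $\nu$ and, if necessary, modify $\omega$ within its deformation class so that $\omega|_\nu$ is $J_0$-tame and the $C_i$ are $\omega$-symplectic. The holomorphic projection $\rho\colon \nu \to \CC^2/\Gamma_{p,q}$ then comes from the standard local model. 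Let $\mathcal{J}(\mC, \nu)$ denote the (nonempty) space of $\omega$-tame almost complex structures on $\tilde X$ that agree with $J_0$ on $\nu$.

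The core step is a relative transversality argument, as in McDuff--Opshtein. I would show that for a Baire-generic $J \in \mathcal{J}(\mC, \nu)$, and every class $A$ with $0 < \omega(A) < \kappa$, the moduli space of somewhere-injective $J$-holomorphic spheres in class $A$ whose image is not contained in $\nu$ is a (possibly empty) smooth manifold of the expected dimension $-2(K \cdot A + 1)$. The standard Sard--Smale scheme applies: any somewhere-injective $J$-holomorphic sphere whose image leaves $\nu$ has an injective point outside $\nu$, where $J$ is unconstrained, and this is the only input needed for surjectivity of the universal linearisation. Gromov compactness guarantees that, as $J$ varies in a $C^0$-bounded neighbourhood, only countably many integer homology classes of energy below $\kappa$ can support $J$-holomorphic spheres, so a countable intersection of generic subsets remains residual; I would define $\mathcal{J}_{reg}(\mC, \kappa)$ to be such a residual set.

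The remaining claims then follow. If $u$ is somewhere-injective, genus zero, of energy $< \kappa$ with $K \cdot A \geq 0$, the expected dimension is $\leq -2$, so no such $u$ exists with image outside $\nu$; but the only compact irreducible $J_0$-analytic curves in $\nu$ are the $C_i$, so $u$ parametrises one of them. A multiply-covered $J$-holomorphic sphere in class $A = kB$ has $K \cdot B = K \cdot A / k \geq 0$, and its image coincides with that of its underlying somewhere-injective factor, so again lies in $C_1 \cup \cdots \cup C_\ell$. For the final statement, suppose $b^+(\tilde X) > 1$ and $D$ is an embedded $J$-holomorphic sphere of energy $< \kappa$; by adjunction $D^2 + K \cdot D = -2$, so either $K \cdot D \geq 0$ (hence $D = C_i$ for some $i$) or $K \cdot D \leq -1$, in which case Corollary \ref{coro:rat} forces $K \cdot D = -1$ and $D^2 = -1$. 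Lemma \ref{lemm:distinctEclasses} makes distinct $-1$-sphere classes have disjoint representatives, and Gromov compactness bounds the number of $-1$-classes of energy below $\kappa$.

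The hard part will be the relative transversality step: because $J$ is pinned to $J_0$ on $\nu$, perturbations must be supported in $\tilde X \setminus \nu$, and one must verify that any somewhere-injective curve meeting the complement of $\nu$ carries an injective point there. This is exactly the scenario treated in McDuff--Opshtein, and is routine once the function spaces are chosen correctly, but the pinning condition on $\nu$ is what makes the statement nontrivial and must be handled with care.
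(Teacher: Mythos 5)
Your proposal is correct and follows essentially the same route as the paper: fix the integrable local model $J_0$ on a neighbourhood $\nu$ of $\mC$, invoke relative transversality (the paper cites the proof of {\cite[Proposition 3.2.1]{McDuffSalamon}}, you cite McDuff--Opshtein — same mechanism) to make every simple sphere leaving $\nu$ regular, note that $K\cdot D\geq 0$ forces negative expected dimension so such curves must lie in $\nu$ and hence, via the contraction $\rho$ to the affine quotient $\CC^2/\Gamma_{p,q}$, inside $C_1\cup\cdots\cup C_\ell$, and finish with Corollary \ref{coro:rat} and Lemma \ref{lemm:distinctEclasses} when $b^+>1$. The only cosmetic difference is that you assert directly that the compact analytic curves in $\nu$ are the $C_i$, where the paper derives this from compactness of $\rho(D)$ in the quotient singularity.
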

\begin{proof}
  Fix the standard (integrable) complex structure $J_0$ on a
  neighbourhood $\nu$ of $C_1\cup\cdots\cup C_{\ell}$ and let
  $\rho\colon\nu\to\CC^2/\Gamma_{p,q}$ denote the holomorphic map
  which contracts the curves $C_i$ to the point $0$.

  The next lemma follows immediately from the proof of
  {\cite[Proposition 3.2.1]{McDuffSalamon}}:
  
  \begin{lemm}
    Let $\mathcal{J}(\nu)$ denote the space of almost complex
    structures on $\tilde{X}$ which agree with $J_0$ on $\nu$. There
    is a residual subset
    $\mathcal{J}_{reg}(\mC,\kappa)\subset\mathcal{J}(\nu)$ such
    that for any $J\in\mathcal{J}_{reg}(\mC,\kappa)$, any
    somewhere-injective irregular $J$-holomorphic sphere with energy
    less than $\kappa$ is contained in $\nu$.
  \end{lemm}

  In four dimensions, a sphere in the class $D$ is regular only if
  $K\cdot D\leq -1$ (otherwise the virtual dimension of its moduli space
  is negative). If $D$ is a $J$-holomorphic sphere with $K\cdot D\geq 0$
  then the underlying somewhere-injective curve $D'$ also has
  $K\cdot D'\geq 0$ and is therefore irregular. Therefore if
  $J\in\mathcal{J}_{reg}(\mC,\kappa)$ and $D$ is a $J$-sphere
  with $K\cdot D\geq 0$ and energy less than $\kappa$ then $D$ is
  contained in $\nu$.
  
  If $D$ is any nonconstant $J$-holomorphic curve in $\tilde{X}$ which
  is completely contained in $\nu$ then its image under $\rho$ is a
  holomorphic curve in $\CC^2/\Gamma_{p,q}$, and is therefore
  constant, so $D\subset C_1\cup\cdots\cup C_\ell$ as required.

  Suppose $b^+(\tilde{X})>1$. If $D$ is a $J$-sphere in $\tilde{X}$
  with energy less than $\kappa$ then, by Corollary \ref{coro:rat},
  either $D$ is an embedded $-1$-sphere (of which there is a finite
  collection and they are all pairwise disjoint, by Lemma
  \ref{lemm:distinctEclasses}) or else $K\cdot D\geq 0$ and by
  what we have proved so far, $D$ is one of the spheres
  $C_1,\ldots,C_{\ell}$.
\end{proof}

\section{Topological obstructions}\label{sct:topology}

\subsection{Discrepancies}

Consider the minimal resolution of the singularity
$\frac{1}{p^2}(pq-1,1)$, with exceptional locus
$C_1,\ldots,C_\ell$. Let $\tilde{U}$ be a neighbourhood of the
exceptional locus, and let $\Sigma$ be the boundary of
$\tilde{U}$. Alexander-Lefschetz duality tells us that
\[H^2(\tilde{U};\QQ)\cong H_2(\tilde{U},\Sigma;\QQ)\cong
H_2(\tilde{U};\QQ),\]
as $H_*(\Sigma;\QQ)$ is concentrated in degree zero. In particular, we
can write $K_{\tilde{U}}$ as a rational linear combination of the
classes $C_j$, Poincar\'{e}-dual to the corresponding curves:
\[K_{\tilde{U}}=\sum_{j=1}^\ell a_jC_j.\]
The coefficients $a_j$ are called the {\em discrepancies} of the
singularities. Note that discrepancies will be rational (non-integer)
numbers because, over $\ZZ$, the sublattice
$H_2(\tilde{U};\ZZ)\subset H_2(\tilde{U},\Sigma;\ZZ)$
has index $p^2$. We can calculate the discrepancies in terms of the
T-string $[b_1,\ldots,b_{\ell}]$ by solving the system of simultaneous
equations:
\begin{align}
  \label{eq:sim1}b_1-2=K_{\tilde{U}}\cdot C_1&=-a_1b_1+a_2\\
\nonumber  b_2-2=K_{\tilde{U}}\cdot C_2&=a_1-a_2b_2+a_3\\
\nonumber  \vdots&\\
\nonumber  b_{\ell-1}-2=K_{\tilde{U}}\cdot
\nonumber  C_{\ell-1}&=a_{\ell-2}-a_{\ell-1}b_{\ell-1}+a_{\ell}\\
\nonumber b_{\ell}=K_{\tilde{U}}\cdot C_{\ell}&=a_{\ell-1}-a_\ell b_\ell.
\end{align}
For example, when $\ell=1$ and the T-string is $[4]$, we get
$a_1=-\tfrac{1}{2}$.

Note that this definition necessarily agrees with the usual
algebro-geometric definition of discrepancies because, in both cases,
the $a_j$ are determined by the simultaneous equations \eqref{eq:sim1}.

Wahl singularities are log terminal, which means that $a_j\in(-1,0)$
for all $j$. The discrepancies of Wahl singularities are discussed
extensively in \cite{Kawamata,Lee}, and from a more symplectic perspective in \cite{McLean_Reeb}. The only property we will use is the following:

\begin{lemm}[{\cite[Corollary 3.2]{Kawamata}}]\label{lemm:kawamata}
 \[a_1+a_\ell=-1.\]
\end{lemm}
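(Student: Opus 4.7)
The plan is to turn the $\ell \times \ell$ linear system for the discrepancies into a clean homogeneous recurrence and then induct on $\ell$ using the $L,R$-generation of T-strings from the previous subsection.

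Writing the system uniformly as $b_j - 2 = a_{j-1} - b_j a_j + a_{j+1}$ with the convention $a_0 = a_{\ell+1} = 0$, I substitute $\alpha_j := 1 + a_j$. A direct check shows the system becomes the homogeneous three-term recurrence
\[
\alpha_{j+1} = b_j \alpha_j - \alpha_{j-1}, \qquad \alpha_0 = \alpha_{\ell+1} = 1,
\]
and the target identity $a_1 + a_\ell = -1$ becomes $\alpha_1 + \alpha_\ell = 1$.

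I would prove $\alpha_1 + \alpha_\ell = 1$ by induction on the length $\ell$. The base case $b = [4]$ is immediate: the boundary values and the recurrence force $\alpha_1 = 1/2$. For the inductive step under $L$, given $b = [b_1, \ldots, b_\ell]$ for which the hypothesis holds, consider $Lb = [2, b_1, \ldots, b_{\ell-1}, b_\ell + 1]$ and try the ansatz
\[
\alpha'_j := \frac{\alpha_{j-1}}{2 - \alpha_1} \quad (1 \leq j \leq \ell+1), \qquad \alpha'_0 = \alpha'_{\ell+2} = 1
\]
for the new $\alpha$-sequence. The middle recurrences of $Lb$ are inherited from those of $b$; the new initial recurrence $\alpha'_2 = 2\alpha'_1 - 1$ is a free algebraic identity; and the new terminal recurrence $(b_\ell+1)\alpha'_{\ell+1} - \alpha'_\ell = 1$ reduces, after using $\alpha_{\ell+1} = b_\ell \alpha_\ell - \alpha_{\ell-1} = 1$, to the identity $\alpha_1 + \alpha_\ell = 1$ --- precisely the inductive hypothesis. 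The same identity then yields $\alpha'_1 + \alpha'_{\ell+1} = (1 + \alpha_\ell)/(2 - \alpha_1) = 1$.

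The $R$-step follows from the symmetry of the system under string reversal $j \leftrightarrow \ell + 1 - j$: reversing $b$ swaps $\alpha_1$ with $\alpha_\ell$, and reversing $Rb$ gives $L(b^{\mathrm{rev}})$, so the $R$-case reduces to the $L$-case applied to $b^{\mathrm{rev}}$. The main subtlety is guessing the scaling factor $1/(2 - \alpha_1)$ in the ansatz; this is essentially forced by matching the initial recurrence of $Lb$, and the striking feature of the argument is that consistency of the terminal recurrence is \emph{equivalent} to the inductive hypothesis, which is what makes the induction close cleanly --- and explains why the identity is special to T-strings rather than to arbitrary Hirzebruch--Jung chains.
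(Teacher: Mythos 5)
Your proof is correct. Note, though, that the paper does not prove this lemma at all --- it is quoted as \cite[Corollary 3.2]{Kawamata} and used as a black box --- so your argument is a genuinely self-contained alternative rather than a reconstruction. The two approaches compare as follows. The paper outsources the identity to Kawamata's algebro-geometric analysis of log terminal degenerations; you instead exploit machinery already present in the paper, namely the $L,R$-generation of T-strings (the unnumbered theorem in Section 3.2, used there only to prove Corollary \ref{coro:wahlcomb}), together with the defining linear system \eqref{eq:sim1}. The substitution $\alpha_j = 1+a_j$ converting the system into the homogeneous three-term recurrence $\alpha_{j+1}=b_j\alpha_j-\alpha_{j-1}$ with $\alpha_0=\alpha_{\ell+1}=1$ is correct (the paper's last displayed equation in \eqref{eq:sim1} has a typo, $b_\ell$ for $b_\ell-2$; your uniform form is the intended one), and I have checked that your ansatz $\alpha'_j=\alpha_{j-1}/(2-\alpha_1)$ satisfies the initial recurrence identically, the middle recurrences by the old ones, and the terminal recurrence precisely when $\alpha_1+\alpha_\ell=1$, which then also gives $\alpha'_1+\alpha'_{\ell+1}=(1+\alpha_\ell)/(2-\alpha_1)=1$; the reduction of the $R$-step to the $L$-step via string reversal is also sound. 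Two points you leave implicit but should state: first, that the linear system \eqref{eq:sim1} has a \emph{unique} solution (this follows from negative-definiteness of the intersection matrix of the exceptional chain, and is needed to conclude that your ansatz really computes the discrepancies of $Lb$); second, that $2-\alpha_1\neq 0$, which is immediate from $\alpha_1\in(0,1)$ or just from $\alpha_1$ being determined inductively. What your route buys is a short, purely combinatorial proof keeping the paper self-contained on this point; what it loses relative to citing Kawamata is the broader context (Kawamata's statement sits inside a structural study of discrepancies of log terminal singularities, of which this identity is a special feature of T-chains --- a feature your argument makes visible by showing the terminal recurrence is \emph{equivalent} to the inductive hypothesis).
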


\subsection{Mayer-Vietoris}

Let $\iota\colon B_{p,q}\to X$ be a symplectic embedding, let
$\orb{X}$ be the orbifold obtained by collapsing $\iota(B_{p,q})$, and $\tilde{X}$ be the minimal resolution of $\orb{X}$. As in
the previous section, let $\tilde{U}$ be a neighbourhood of the
exceptional locus $\mC=C_1\cup\ldots\cup C_\ell$ of the
resolution; let $V\subset\tilde{X}$ be the complement of $\tilde{U}$,
and let $\Sigma$ be the interface between $\tilde{U}$ and $V$. As the
boundary of $\tilde{U}$ is a rational homology sphere, the
Mayer-Vietoris sequence for $\tilde{X}=\tilde{U}\cup V$ over $\QQ$
gives
\[H^2(\tilde{X};\QQ)=H^2(\tilde{U};\QQ)\oplus H^2(V;\QQ).\]
In terms of this decomposition, we have
\[K_{\tilde{X}}=\left(K_{\tilde{U}},K_V\right),\]
because the first Chern class is natural under pullbacks.

\begin{lemm}\label{lemm:multcap}
  If $F$ is a cycle in $\tilde{X}$, there is a (closed) cycle $F'$ in
  $V$, obtained by multiplying $F$ by $p^2$, slicing it along $\Sigma$
  and capping off the result, such that
  \[(0,K_V)\cdot F=\frac{1}{p^2}K_V\cdot F'.\]
\end{lemm}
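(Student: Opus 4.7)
The plan is to represent the class $(0, K_V) \in H^2(\tilde{X};\QQ)$ by a closed $2$-form $\alpha$ on $\tilde{X}$ that is supported in the interior of $V$, and then to compute both sides of the asserted identity as honest integrals of $\alpha$.

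First I would construct $\alpha$. Pick any closed $2$-form $\alpha_V$ on $V$ representing $K_V$. Since $\Sigma = \partial \tilde{U} = \partial V$ is a rational homology sphere we have $H^2(\Sigma;\RR) = 0$, so $\alpha_V$ restricts to an exact form on a collar of $\Sigma$ in $V$. A standard bump-function argument lets me modify $\alpha_V$ in this collar by the differential of a compactly supported $1$-form so as to obtain a closed $2$-form on $V$, still in the class $K_V$, that vanishes identically on some neighborhood of $\Sigma$. Extending by zero across $\Sigma$ into $\tilde{U}$ produces a closed $2$-form $\alpha$ on $\tilde{X}$. Under the Mayer-Vietoris isomorphism $H^2(\tilde{X};\QQ) \cong H^2(\tilde{U};\QQ) \oplus H^2(V;\QQ)$, the form $\alpha$ restricts to $0$ on $\tilde{U}$ and to $K_V$ on $V$, so it represents $(0, K_V)$.

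Next I would unwind the construction of $F'$ given in the statement. After a small transverse perturbation of $F$, the chain $F \cap V$ is a $2$-chain in $V$ with $\partial(F\cap V) \subset \Sigma$. Since $H_1(\Sigma;\ZZ) \cong \ZZ/p^2$, the scaled boundary $p^2 \partial(F\cap V)$ is null-homologous over $\ZZ$, and picking an integer bounding $2$-chain $c \subset \Sigma$ yields the closed cycle $F' = p^2(F\cap V) + c$ in $V$. (The multiplication by $p^2$ is precisely what is needed to kill the torsion obstruction to capping off integrally.)

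Finally I would compute both pairings as integrals of $\alpha$. Because $\alpha$ vanishes on $\tilde{U}$ and on a neighborhood of $\Sigma$ (which contains $c$), I get
\[
(0, K_V) \cdot F = \int_F \alpha = \int_{F \cap V}\alpha, \qquad K_V \cdot F' = p^2\int_{F\cap V}\alpha + \int_c \alpha = p^2\int_{F\cap V}\alpha,
\]
so that $(0,K_V)\cdot F = \tfrac{1}{p^2}K_V \cdot F'$. The only genuinely non-formal step in this outline is the construction of $\alpha$: it relies essentially on the vanishing $H^2(\Sigma;\RR) = 0$, i.e.\ on $\Sigma$ being a rational homology sphere, which is exactly the feature of Wahl singularities that makes the lemma plausible in the first place.
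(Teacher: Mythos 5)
Your proposal is correct and follows essentially the same route as the paper: the cycle $F'=p^2(F\cap V)+c$ is built identically (using $H_1(\Sigma;\ZZ)\cong\ZZ/p^2$ to cap off), and the heart of the matter in both arguments is that the cap in $\Sigma$ contributes nothing because one can choose a representative of $K_V$ that is trivial near $\Sigma$. The only (minor) difference is in how that triviality is arranged: you use $H^2(\Sigma;\RR)=0$ to make the de Rham representative vanish identically on a collar, whereas the paper multiplies by $p^2$ to trivialise the canonical bundle near $\Sigma$ and takes the Chern form of a flat connection; both are valid and rest on $\Sigma$ being a rational homology sphere.
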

\begin{proof}
  Consider the composition $\Phi$ of maps
  \[H_2(\tilde{X};\ZZ)\to H_2(\tilde{X},\tilde{U};\ZZ)\cong
  H_2(V,\Sigma;\ZZ)\to H_1(\Sigma;\ZZ).\]
  Since $H_1(\Sigma;\ZZ)=\ZZ/(p^2)$, we see that $\Phi(p^2F)=0$. Let
  $s$ be the 1-cycle in $\Sigma$ which is the image of $p^2F$ under
  the chain-level version of $\Phi$. Pick a 2-chain $P$ in $\Sigma$
  such that $\partial P=-s$. Now the chain $F'=p^2F+P$ is a (closed, not relative) cycle
  in $V$. We have
  \[K_V\cdot F=\frac{1}{p^2}K_V\cdot F'.\]
 To see that there is no
  contribution to the intersection pairing from the 2-chain $P$, note
  that, after multiplying both sides of the equality by $p^2$ to make
  the canonical bundle trivial and not just torsion in a neighbourhood
  of $\Sigma$, one could represent the canonical class $K_V$ by the
  first Chern form of a connection which was flat in a neighbourhood
  of $\Sigma$.
\end{proof}

Assuming that the 1-cycle $s$ considered in the proof of Lemma
\ref{lemm:multcap} is a combination of Reeb orbits in $\Sigma$ for the
standard contact form, we now explain how to choose the caps to have
positive symplectic area. This will be used in Section
\ref{sct:negmon} below to find restrictions on how certain holomorphic
curves can intersect $\mC$.

\begin{lemm}\label{lemm:caps}
  Let $\alpha$ be the standard contact form on $\Sigma$ and let
  $\gamma$ be a closed Reeb orbit for $\alpha$. The $p^2$-fold cover
  of $\gamma$ is the asymptote of a holomorphic
  disc in $\left(\CC^2\setminus\{0\}\right)/\Gamma_{p,q}$.
\end{lemm}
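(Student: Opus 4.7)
The plan is to construct the disc explicitly as a branched cover of a radial disc in a complex line through the origin. The standard contact form on $\Sigma=S^3/\Gamma_{p,q}$ descends from the standard contact form on $S^3\subset\CC^2$, whose Reeb vector field generates the diagonal Hopf flow $(x,y)\mapsto(e^{it}x,e^{it}y)$. Hence every closed Reeb orbit on $\Sigma$ is the image of some Hopf circle
\[\tilde\gamma=\{(e^{it}v_1,e^{it}v_2)\mid t\in[0,2\pi]\}\subset S^3\]
with $(v_1,v_2)\in S^3$. Let $H\subset\Gamma_{p,q}$ be the setwise stabiliser of $\tilde\gamma$, set $d=|H|$, and let $\bar\gamma\subset\Sigma$ be the simple orbit which $\tilde\gamma$ covers $d$ times. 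Writing $\gamma$ as the $k$-fold iterate of $\bar\gamma$ for some $k\geq 1$, the $p^2$-fold cover of $\gamma$ is the $kp^2$-fold cover of $\bar\gamma$. Since $d$ divides $|\Gamma_{p,q}|=p^2$, the integer $n=kp^2/d$ is positive.

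The candidate is the map
\[u\colon D\setminus\{0\}\longrightarrow(\CC^2\setminus\{0\})/\Gamma_{p,q},\qquad u(z)=[z^n(v_1,v_2)],\]
obtained from the holomorphic map $z\mapsto z^n(v_1,v_2)$, which avoids $0\in\CC^2$ on $D\setminus\{0\}$ since $(v_1,v_2)\ne 0$, by post-composing with the quotient projection. It is manifestly holomorphic. On the circle $\{|z|=r\}$ its image is the loop $\theta\mapsto r^n e^{in\theta}(v_1,v_2)$; in $\CC^2$ this traverses the rescaled Hopf fiber through $(v_1,v_2)$ exactly $n$ times, and after quotienting by $\Gamma_{p,q}$ it traverses $\bar\gamma$ exactly $nd=kp^2$ times -- equivalently, $\gamma$ exactly $p^2$ times. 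So the asymptote of $u$ at the puncture $z=0$ is the $p^2$-fold cover of $\gamma$, as required. A polar-coordinate computation gives $\int_{D}u^*\omega=\pi n(|v_1|^2+|v_2|^2)=\pi n>0$, so the disc has positive symplectic area, which is the property wanted for the application in the preceding discussion.

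There is essentially no analytic content: everything is verified by hand. The only fact used is that the covering multiplicity $d$ of $\tilde\gamma\to\bar\gamma$ divides $p^2$, which is automatic because $d$ is the order of a subgroup of $\Gamma_{p,q}$. Conceptually, the disc is just the $n$-fold branched cover of the unit disc in the complex line $\CC\cdot(v_1,v_2)$, punctured at the origin and pushed down through the orbifold quotient.
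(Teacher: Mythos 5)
Your construction produces the wrong curve for this lemma. The map $u(z)=[z^n(v_1,v_2)]$ on $D\setminus\{0\}$ parametrises (a multiple cover of) the punctured cone over $\gamma$, i.e.\ the trivial half-cylinder over the orbit: its asymptote to the $p^2$-fold cover of $\gamma$ sits at the \emph{negative} end $z\to 0$, where the image converges to the excluded cone point, and it carries an additional boundary circle on $\Sigma$ at $|z|=1$. The lemma exists to supply the caps used in Lemmas \ref{lemm:multcap} and \ref{lemm:monotonicity}: there one must replace the ends of $\rho\circ u\circ\phi$ that converge to the singular point $p$ by holomorphic surfaces that stay away from $p$, so as to obtain a \emph{closed} cycle of positive area in $V=\orb{X}\setminus\{p\}$. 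For that, a cap must be a plane with a single \emph{positive} puncture asymptotic to the $p^2$-fold cover of $\gamma$ (equivalently, a compact holomorphic disc in $(B^4\setminus\{0\})/\Gamma_{p,q}$ with boundary on $\Sigma$ wrapping the orbit $p^2$ times and avoiding the origin). Gluing your half-cylinder to a negative puncture of the ambient curve merely prolongs that puncture back into the cone point and closes nothing; and if instead you fill in $z=0$, the resulting disc passes through the origin, so it does not lie in $\left(\CC^2\setminus\{0\}\right)/\Gamma_{p,q}$ at all. Positivity of area, which you verify carefully, is not the property at stake.

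The paper's proof differs in exactly one decisive choice: it takes an \emph{affine} line $ax+by+c=0$ with $c\neq 0$, which misses the origin, descends to a holomorphic plane in $\left(\CC^2\setminus\{0\}\right)/\Gamma_{p,q}$ whose unique end is at infinity and is there asymptotic to the $\left|\OP{Stab}([a:b])\right|$-fold cover of $\gamma$, and then precomposes with $z\mapsto z^{m}$, $m=p^2/\left|\OP{Stab}([a:b])\right|$. Your arithmetic observation --- that the covering multiplicity of the Hopf circle over the simple orbit is the order of a stabiliser in $\Gamma_{p,q}$ and hence divides $p^2$ --- is precisely the divisibility the paper uses, so that part of your argument transfers verbatim; the repair is to replace the line through the origin by a parallel translate of it.
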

\begin{proof}
  The Reeb orbits in the standard 3-sphere are in bijection with the
  possible slopes $[a:b]$ of complex lines in $\CC^2$: an affine
  complex line $ax+by+c=0$ in $\CC^2$ is asymptotic to the Reeb orbit
  corresponding to $[a:b]$. The action $\Gamma_{p,q}$ of $\rt_{p^2}$
  on $\CC^2$ gives an action of $\rt_{p^2}$ on the $\cp{1}$ of Reeb
  orbits; for example, under this action, the points $[1:0]$ and
  $[0:1]$ each have stabiliser isomorphic to $\rt_{p^2}$.

  Reeb orbits $\gamma$ in the quotient $S^3/\Gamma_{p,q}$ are in
  correspondence with $\cp{1}/\Gamma_{p,q}$. Let $\gamma$ be the Reeb
  orbit corresponding to $[a:b]\in\cp{1}/\Gamma_{p,q}$. The complex
  line $ax+by+c$ with $c\neq 0$ gives a holomorphic plane in
  $\left(\CC^2\setminus\{0\}\right)/\Gamma_{p,q}$ which is asymptotic
  to the $\OP{Stab}([a:b])$-fold cover of $\gamma$. If we let
  $m=p^2/\OP{Stab}([a:b])$ and precompose the plane with the $m$-fold
  branched cover $\CC\to\CC$, $z\mapsto z^{m}$, then we get a
  holomorphic plane asymptotic to $p^2$ times the Reeb orbit $\gamma$,
  as required.
\end{proof}

\subsection{Negative monotonicity}\label{sct:negmon}

\begin{lemm}\label{lemm:monotonicity}
  Suppose that $(X,\omega)$ is a negatively monotone symplectic
  manifold, that is (after possibly rescaling the symplectic form)
  $K_X=[\omega]$. Let $\iota\colon B_{p,q}\to X$ be a symplectic
  embedding, let $\orb{X}$ be the orbifold obtained by collapsing
  $\iota(B_{p,q})$, and $\rho\colon\tilde{X}\to\orb{X}$ be
  the minimal resolution of $\orb{X}$. Let
  $J\in\mathcal{J}_{reg}(\mC,\kappa)$. If $F\subset\tilde{X}$
  is a $J$-holomorphic curve such that $\rho(F)$ is nonconstant then
  $K_V\cdot F>0$.
\end{lemm}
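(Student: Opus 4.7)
The plan is to apply Lemma \ref{lemm:multcap} together with the monotonicity hypothesis $K_X=[\omega]$ and a Stokes calculation on the link $\Sigma$. First I would argue that $\rho(F)$ nonconstant forces $F$ to meet $V$: shrinking $\tilde U$ if necessary so that $\rho(\tilde U)$ is a small neighbourhood of the singular point in $\CC^2/\Gamma_{p,q}$, any closed holomorphic curve contained in $\tilde U$ would map under $\rho$ to a closed holomorphic curve in that small Stein neighbourhood and hence to a point, with preimage either $\mC$ or a single point — forcing $F\subset\mC$, whose image under $\rho$ is constant. Thus $F|_V$ is a nontrivial $J$-holomorphic chain in $V$, and since $J$ is $\omega$-tame, $\int_{F|_V}\omega>0$.

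Under the diffeomorphism $V\cong X\setminus\iota(B_{p,q})$, the hypothesis $K_X=[\omega]$ identifies the Mayer--Vietoris class $K_V\in H^2(V;\QQ)$ with $[\omega|_V]$. Applying Lemma \ref{lemm:multcap} with a $2$-chain $P\subset\Sigma$ satisfying $\partial P=-\partial(p^2F|_V)$, and setting $F'=p^2F|_V+P$, we obtain
\[K_V\cdot F \;=\; \tfrac{1}{p^2}K_V\cdot F' \;=\; \tfrac{1}{p^2}\int_{F'}\omega \;=\; \int_{F|_V}\omega + \tfrac{1}{p^2}\int_P\omega.\]
Since the first summand is strictly positive, it suffices to prove $\int_P\omega\geq 0$.

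For that, take a Liouville primitive $\lambda$ of $\omega$ on a collar of $\Sigma$ inside $V$; its restriction $\alpha=\lambda|_\Sigma$ is the standard contact form on $\Sigma\cong S^3/\Gamma_{p,q}$, and $\omega|_\Sigma=d\alpha$. Stokes' theorem then yields
\[\int_P\omega \;=\; -p^2\int_{F\cap\Sigma}\alpha,\]
where $F\cap\Sigma$ is oriented as the boundary of $F|_V$; after an arbitrarily small perturbation of the radius defining $\Sigma$, we may assume this intersection is transverse. Because $J=J_0$ near $\Sigma$, $F$ lifts locally to a complex curve in $\CC^2\setminus\{0\}$ meeting the round sphere transversely and positively with respect to the Hopf Reeb flow, and since $V$ plays the role of the \emph{exterior} of this round sphere, the boundary orientation of $F|_V$ is opposite to Reeb. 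Therefore $\int_{F\cap\Sigma}\alpha\leq 0$, giving $\int_P\omega\geq 0$ and $K_V\cdot F>0$. The step I expect to demand the most care is this final orientation argument, which I would verify by reducing to the explicit model computation for a complex line in $\CC^2$ meeting the round $S^3$; everything else is bookkeeping with the Mayer--Vietoris decomposition.
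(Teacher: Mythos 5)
Your argument is correct and follows the same skeleton as the paper's --- cap off $p^2F|_V$ to a closed cycle $F'$ in $V$, use $K_V=[\omega|_V]$ (which is where negative monotonicity enters), and show $F'$ has positive area --- but it handles the caps differently. The paper passes to a branched cover of the punctured curve $\rho(F)|_{R\setminus Z}$ so that its asymptotics become $p^2$-fold covers of Reeb orbits, and then glues in the \emph{holomorphic} planar caps of Lemma \ref{lemm:caps}; positivity of area is then automatic because every piece of the glued surface is holomorphic. You instead take arbitrary topological $2$-chains $P\subset\Sigma$ as in Lemma \ref{lemm:multcap} and control $\int_P\omega$ by Stokes, which bypasses Lemma \ref{lemm:caps} and the branched-cover trick entirely and is in that sense more elementary. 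The one step you should tighten is the sign of $\int_{F\cap\Sigma}\alpha$: the pointwise assertion that the intersection circles are positively transverse to the Reeb flow is true for the standard K\"ahler model (it follows from $\alpha(\tau)=|\nabla^C\phi|^2/|\nabla^C\phi|$ for the plurisubharmonic potential $\phi$ restricted to the curve), but it is easier and more robust to argue integrally: since $\omega=d\lambda$ on the punctured cone $W\setminus\{p\}\subset\orb{X}$ and $\lambda$ extends continuously by zero over the cone point, Stokes applied to the interior holomorphic piece $\rho(F)\cap W$ gives $\int_{-\gamma}\alpha=\int_{\rho(F)\cap W}\omega\geq 0$, where $\gamma=\partial(F|_V)$; this yields $\int_P\omega\geq 0$ directly, with no transversality perturbation needed and no worry about components of $F$ lying in $\mC$ or passing through the singular point. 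With that substitution your proof is complete and, to my eye, slightly cleaner than the paper's.
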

\begin{proof}
  Since $J$ is standard on a neighbourhood of $\mC$, there is
  an almost complex structure $\orb{J}$ on $\orb{X}$ obtained by
  contracting $\mC$ to the singular point $p$, and a
  holomorphic map $\rho\colon\tilde{X}\to\orb{X}$. If $R$ is a Riemann
  surface and $u\colon R\to\tilde{X}$ is a $J$-holomorphic curve in
  $\tilde{X}$ then $\rho\circ u$ is a $\orb{J}$-holomorphic curve in
  $\orb{X}$. Let $Z=(\rho\circ u)^{-1}(p)$. The curve
  $\rho\circ u|_{R\setminus Z}$
  is a punctured holomorphic curve in a noncompact symplectic manifold
  where the noncompact end is modelled on a punctured neighbourhood of
  $0\in\CC^2/\Gamma_{p,q}$. This means that the punctures of
  $\rho\circ u|_{R\setminus Z}$ are asymptotic to covers of Reeb
  orbits for the standard contact form on $\Sigma$. Let
  $\phi\colon R'\to R$ be an $N$-fold branched cover such that
  each point $z\in Z$ is a branch point with multiplicity $p^2$ (there
  may also be other branch points and $R'$ may have higher genus
  than $R$). Let $Z'=\phi^{-1}(Z)$. Now
  $\rho\circ u\circ\phi|_{R'\setminus Z'}$
  is a punctured holomorphic curve which is asymptotic to covers of
  Reeb orbits where the covering multiplicity is a multiple of
  $p^2$. Now, just topologically, glue
  $\rho\circ u\circ\phi|_{R'\setminus Z'}$ to the holomorphic
  planar caps
  constructed in Lemma \ref{lemm:caps}. We get a topological surface
  with positive area in $V=\orb{X}\setminus\{p\}$ which is homologous
  to $N[u]$.
\end{proof}

\begin{theo}\label{theo:magic}
  Suppose that $K_X=[\omega]$. If
  $J\in\mathcal{J}_{reg}(\mC,\kappa)$ and $F\subset\tilde{X}$
  is a $J$-holomorphic curve then
  \[\sum_{j=1}^\ell a_jF\cdot C_j<K_{\tilde{X}}\cdot F.\]
  If $K_{\tilde{X}}\cdot F=-1$ (for example, if $F$ is an embedded
  symplectic $-1$-sphere) then the following intersection patterns
  between $F$ and the $C_j$ curves cannot occur:
  \begin{enumerate}
  \item[(1)] $F\cdot C_j=0$ for all $j\neq j_0$ and $F\cdot C_{j_0}=1$.
  \item[(2)] $F\cdot C_1=F\cdot C_\ell=1$ and $F\cdot C_j=0$ for
    $j\not\in\{2,\ldots,\ell-1\}$.
  \end{enumerate}
\end{theo}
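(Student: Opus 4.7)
The plan is to decompose $K_{\tilde X}$ via Mayer--Vietoris. Since $\Sigma = \partial \tilde{U}$ is a rational homology sphere, the excerpt already gives $H^2(\tilde X;\QQ) = H^2(\tilde U;\QQ) \oplus H^2(V;\QQ)$ and $K_{\tilde X} = (K_{\tilde U}, K_V)$. Pairing with $F$ yields
\[K_{\tilde X}\cdot F = K_{\tilde U}\cdot F + K_V\cdot F.\]
By the very definition of the discrepancies, $K_{\tilde U}=\sum_j a_j C_j$ inside $H^2(\tilde U;\QQ)\cong H_2(\tilde U;\QQ)$, so $K_{\tilde U}\cdot F = \sum_j a_j(C_j\cdot F)$ with intersection numbers computed in $\tilde X$. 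For the second term I would invoke Lemma \ref{lemm:multcap} (to interpret $K_V\cdot F$ via the capped cycle $F'\subset V$) together with Lemma \ref{lemm:monotonicity}: provided $\rho(F)$ is nonconstant --- the only case to check, since a $J$-holomorphic curve with image in $\mC$ is covered by adjunction for some $C_j$ --- the capped surface has strictly positive symplectic area in $V$, and $K_X=[\omega]$ forces $K_V\cdot F>0$. Substituting produces the main inequality
\[\sum_{j=1}^\ell a_j\,F\cdot C_j < K_{\tilde X}\cdot F.\]

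The two exclusion statements fall out by plugging $K_{\tilde X}\cdot F = -1$ into this inequality. In Case (1), only the term $a_{j_0}$ survives on the left, so the inequality would demand $a_{j_0} < -1$; but Wahl singularities are log terminal, giving $a_{j_0}\in(-1,0)$, a contradiction. In Case (2), only the endpoint terms survive, so the left side is $a_1+a_\ell$, which by Lemma \ref{lemm:kawamata} equals $-1$; the required strict inequality $-1<-1$ is impossible.

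The hard part will not be the Mayer--Vietoris bookkeeping but the positivity $K_V\cdot F>0$ itself, which has already been engineered into the preceding subsections: Lemma \ref{lemm:multcap} handles the fact that $F$ meets both $\tilde U$ and $V$ by passing to the $p^2$-multiple and slicing along $\Sigma$, while Lemma \ref{lemm:monotonicity} actually \emph{realises} those caps as glued Reeb asymptotes of holomorphic planar caps from Lemma \ref{lemm:caps}, so that the symplectic area (hence the $K_V$-pairing, under $K_X=[\omega]$) is manifestly positive. Once those inputs are accepted, the present theorem requires no further analysis --- it is essentially a reshuffling of the decomposition $K_{\tilde X}\cdot F = K_{\tilde U}\cdot F + K_V\cdot F$ combined with the two numerical facts $a_j\in(-1,0)$ and $a_1+a_\ell=-1$.
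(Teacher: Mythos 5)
Your proposal is correct and follows essentially the same route as the paper: decompose $K_{\tilde{X}}\cdot F=K_{\tilde{U}}\cdot F+K_V\cdot F$ via Mayer--Vietoris, identify $K_{\tilde{U}}\cdot F$ with $\sum_j a_j F\cdot C_j$, invoke Lemma \ref{lemm:monotonicity} for the positivity $K_V\cdot F>0$, and then derive the two exclusions from $a_{j_0}\in(-1,0)$ and $a_1+a_\ell=-1$. The only (harmless) difference is that you explicitly flag the need for $\rho(F)$ to be nonconstant, which the paper leaves implicit.
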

\begin{proof}
  Since $\rho(F)$ is a nonconstant holomorphic curve in $\orb{X}$,
  Lemma \ref{lemm:monotonicity} tell us that
  $K_{V}\cdot F>0$. Therefore,
  \[0<K_V\cdot F=K_{\tilde{X}}\cdot F-\sum_{j=1}^\ell
  a_jC_j\cdot F\]
  This tells us that
  \begin{equation}\label{eq:magicineq}
    \sum_{j=1}^\ell a_jF\cdot C_j<K_{\tilde{X}}\cdot F.
  \end{equation}
  Now suppose that $K_{\tilde{X}}\cdot F=-1$.
  \begin{enumerate}
  \item[(1)] If $F\cdot C_j=0$ for all $j\neq j_0$ and
    $F\cdot C_{j_0}=1$ then Equation \eqref{eq:magicineq} implies
    \[-1<a_{j_0}<K_{\tilde{X}}\cdot F=-1,\]
    (using Lemma \ref{lemm:kawamata}) which gives a contradiction.
  \item[(2)] If $F\cdot C_1=F\cdot C_\ell=1$ and $F\cdot C_j=0$ for
    $j\in\{2,\ldots,\ell-1\}$ then Equation \eqref{eq:magicineq}
    implies (using Lemma \ref{lemm:kawamata})
    \[-1=a_1+a_\ell<K_{\tilde{X}}\cdot F=-1,\]
    which, again, gives a contradiction.
  \end{enumerate}
\end{proof}

\section{Exceptional spheres and their limits}\label{sct:exclim}

Let $S_1,\ldots,S_k$ be a maximal set of pairwise disjoint embedded
symplectic $-1$-spheres in $\tilde{X}$ and let $E_i=[S_i]$. Since
$b^+(\tilde{X})>1$, Lemma \ref{lemm:distinctEclasses} tells us that
there is a unique such set up to Hamiltonian isotopy and it contains
one symplectic $-1$-sphere from every possible isotopy class. Let
$\mathcal{E}=\{E_1,\ldots,E_k\}$.

\begin{theo}\label{theo:taubes}
  We have $k\geq \ell-K_X^2$, where $K_X$ is the canonical class of
  $X$.
\end{theo}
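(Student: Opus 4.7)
The plan is to compare $K_{\tilde X}^2$ with both $K_X^2$ and $K_S^2$, and to combine the resulting identities with the classical lower bound $K_S^2 \geq 0$ for a minimal symplectic $4$-manifold that is not rational or ruled.

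First I would show that $K_{\tilde X}^2 = K_X^2 - \ell$ by an Euler-characteristic/signature computation. Topologically, $\tilde X$ differs from $X$ only by excising the rational homology ball $B_{p,q}$ and gluing in a tubular neighbourhood $\tilde U$ of the chain $\mC = C_1 \cup \cdots \cup C_\ell$. Retracting $\tilde U$ onto $\mC$ gives $\chi(\tilde U) = \ell + 1$, and the negative definiteness of the Wahl intersection form gives $\sigma(\tilde U) = -\ell$; meanwhile $\chi(B_{p,q}) = 1$ and $\sigma(B_{p,q}) = 0$. The common boundary $\Sigma_{p,q}$ is a rational homology sphere, so Novikov additivity of the signature applies and yields $\chi(\tilde X) - \chi(X) = \ell$ and $\sigma(\tilde X) - \sigma(X) = -\ell$. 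The identity $c_1^2 = 2\chi + 3\sigma$, valid for any almost complex (in particular, symplectic) $4$-manifold, combined with $K^2 = c_1^2$, then delivers $K_{\tilde X}^2 - K_X^2 = 2\ell - 3\ell = -\ell$.

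Second, by construction of the symplectic minimal model (McDuff's structure theorem), $S$ is obtained from $\tilde X$ by blowing down a maximal collection of $k$ disjoint embedded symplectic $-1$-spheres. Each symplectic blow-up decreases $K^2$ by one, so $K_{\tilde X}^2 = K_S^2 - k$.

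Finally I would verify $K_S^2 \geq 0$. A Mayer--Vietoris computation over $\QQ$ shows that the rational blow-up $X \rightsquigarrow \tilde X$ changes only $b^-$, and symplectic blow-downs likewise affect only $b^-$, so $b^+(S) = b^+(\tilde X) = b^+(X) > 1$. In particular $S$ is neither rational nor ruled, and by the symplectic-Kodaira-dimension results of Taubes and Li--Liu any minimal symplectic $4$-manifold in this class satisfies $K^2 \geq 0$. Assembling the three steps gives
\[K_X^2 - \ell = K_{\tilde X}^2 = K_S^2 - k \geq -k,\]
i.e.\ $k \geq \ell - K_X^2$, as required. The only nontrivial input is the last inequality $K_S^2 \geq 0$; the rest is bookkeeping of Euler characteristics, signatures, and the symplectic blow-up formula.
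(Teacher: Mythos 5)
Your argument is correct and follows essentially the same route as the paper: the identities $K_{\tilde{X}}^2=K_X^2-\ell$ and $K_S^2=K_{\tilde{X}}^2+k$ combined with the Taubes/Liu bound $K_S^2\geq 0$ for the minimal model. The only difference is that you supply the Euler-characteristic/signature/Novikov-additivity justification for the first identity, which the paper states without proof.
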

\begin{proof}
  We have $K_{\tilde{X}}^2=K_X^2-\ell$ and $K_S^2=K_{\tilde{X}}^2+k$,
  so
  \[K_S^2=K_X^2+k-\ell.\]
  By a theorem of Taubes {\cite[Theorem A(3)]{TaubesSWG}} (in the case
  $b^+>1$) and Liu {\cite[Main Theorem A]{LiuGompf}} (whenever the
  minimal model is not irrational ruled), since $S$ is minimal, we
  have $K_S^2\geq 0$. This implies $k\geq \ell-K_X^2$.
\end{proof}

Pick $\kappa\in\RR$ bigger than $\max_{i=1}^k\int_{S_i}\omega$ and let
$J\in\mathcal{J}_{reg}(\mC,\kappa)$. By definition (see Lemma
\ref{lemm:cpxstr}), the only somewhere-injective $J$-holomorphic
spheres with energy less than $\kappa$ are $C_1,\ldots,C_\ell$ and
possibly a collection of embedded $-1$-spheres. Let
$\{J_t\}_{t=1}^{\infty}$, be a sequence of almost complex structures
such that $\lim_{t\to\infty}J_t=J$ and such that for all $t<\infty$,
the classes $E_1,\ldots,E_k$ have embedded $J_t$-holomorphic
representatives $E_i(J_t)$ (this is possible because the space of
almost complex structures for which the homology classes $E_i$ are
represented by embedded holomorphic curves is dense in the space of
all tame almost complex structures). In the limit $t\to\infty$,
Gromov's compactness theorem asserts that the curves $E_i(J_t)$
converge to $J$-holomorphic stable maps.

\begin{defi}
  We will abuse notation and write $E_i$ for the $J$-holomorphic
  stable map in the class $E_i$. We say that $E_i$ is an {\em unbroken
    curve} if its domain is irreducible; otherwise, we say that $E_i$
  is a {\em broken curve}. We will write $\mE_{broken}\subset\mE$ for
  the subset of broken curves and $\mE_{unbroken}\subset\mE$ for the
  subset of unbroken curves. By Proposition \ref{prop:excfirst}, all of
  these curves are exceptional curves of the first kind because they
  all satisfy $K_{\tilde{X}}\cdot E_i=-1$ and they all inhabit
  homology classes which can be represented by embedded symplectic
  spheres.
\end{defi}

\begin{rema}
  At this point in the proof, we may appeal once again to
  {\cite[Theorem 3]{Sikorav}} and assume that \(J\) is integrable in a
  neighbourhood of {\em all} the curves (\(\mathcal{C}\) and
  \(\mathcal{E}\)) under consideration. This is important because, in
  what follows, when we talk about blowing down a curve, we mean the
  usual complex analytic blow-down.
\end{rema}

\subsection{Unbroken curves}

\begin{lemm}\label{lemm:excsphbound}
  Suppose that $E\in\mE_{unbroken}$ is an unbroken curve. Then:
  \begin{enumerate}
  \item[(a)] $\sum_{j=1}^\ell E\cdot C_j\geq 2$.
  \item[(b)] $E\cdot C_j\leq b_j-1$ with equality if and only if
    $b_j=2$ and $E\cdot C_j=1$.
  \end{enumerate}
\end{lemm}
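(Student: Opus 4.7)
Since $E\in\mathcal{E}_{unbroken}$ is an exceptional curve of the first kind with irreducible domain and $K_{\tilde X}\cdot E = E^2 = -1$, the combination of Corollary~\ref{coro:rat} (ruling out multiple covers, since $-1$ is not a nontrivial integer multiple) and the adjunction formula forces $E$ to be a somewhere-injective, embedded $J$-holomorphic $(-1)$-sphere. The plan is to prove the two inequalities separately, using Theorem~\ref{theo:magic} for part (a) and a blow-down followed by Corollary~\ref{coro:rat} for part (b).

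For part (a), the inequality in Theorem~\ref{theo:magic} gives
\[
\sum_{j=1}^\ell a_j\,(E\cdot C_j) < K_{\tilde X}\cdot E = -1.
\]
If $\sum_j E\cdot C_j = 0$, the left side vanishes, yielding the contradiction $0<-1$. If $\sum_j E\cdot C_j = 1$, then exactly one $E\cdot C_{j_0}=1$ and the rest vanish, which is precisely the intersection pattern ruled out by part (1) of Theorem~\ref{theo:magic}. Hence $\sum_j E\cdot C_j \geq 2$.

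For part (b), fix $j$, set $c=E\cdot C_j$, and observe that $c=0$ is immediate since $b_j\geq 2$; assume $c\geq 1$. Using the Sikorav perturbation from the proof of Proposition~\ref{prop:excfirst}, arrange that $J$ is integrable near $E$ and holomorphically blow down $E$ via $\pi\colon\tilde X\to X'$. Since $(X',\omega')$ is still a symplectic four-manifold with $b^+>1$, Taubes's theorem gives $SW_{X'}(K_{X'})\neq 0$, so Corollary~\ref{coro:rat} applies in $X'$. The image $C'_j=\pi(C_j)$ is a somewhere-injective $J'$-holomorphic rational curve, and the standard blow-up relations $\pi^*[C'_j]=[C_j]+c[E]$ and $K_{\tilde X}=\pi^*K_{X'}+[E]$ yield
\[
(C'_j)^2 = -b_j + c^2, \qquad K_{X'}\cdot C'_j = b_j - 2 - c.
\]
Corollary~\ref{coro:rat} forces $K_{X'}\cdot C'_j \geq -1$, with $(C'_j)^2 = -1$ whenever equality holds. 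If $c \geq b_j$ then $K_{X'}\cdot C'_j \leq -2$, a contradiction; if $c = b_j - 1$ then $K_{X'}\cdot C'_j = -1$ and the constraint $(C'_j)^2 = -1$ reduces to $c^2 = b_j - 1 = c$, forcing $c=1$ and (since $c=b_j-1$) $b_j=2$. This is exactly the bound $c\leq b_j - 1$ with the stated equality case.

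The only delicate point is the invocation of Corollary~\ref{coro:rat} in the blown-down manifold $X'$: this depends on the tameness of the induced almost complex structure and on the SW-basicness of $K_{X'}$, both of which are routine but need to be recorded once Sikorav's theorem has been used to integrability-perturb $J$ near $E$.
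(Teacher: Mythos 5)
Your proposal is correct and follows essentially the same route as the paper: part (a) via the strict inequality and pattern (1) of Theorem \ref{theo:magic}, and part (b) by blowing down $E$ and applying Corollary \ref{coro:rat} to the image of $C_j$ (you phrase the equality case through the forced self-intersection $(C'_j)^2=-1$ where the paper phrases it through embeddedness of $C'_j$, but these are the same constraint). No gaps.
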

\begin{proof}
  Part (a) follows from Theorem \ref{theo:magic}(1). See also
  {\cite[Section 3, Step 3]{Khodorovskiy}}.

  To prove part (b), let $S'$ be the result of blowing down the sphere
  $E$, and let $C'_j$ denote the image of $C_j$ under this blow-down
  map. We have
  \[K_{S'}\cdot C'_j=K_{\tilde{X}}\cdot C_j-E\cdot C_j=b_j-2-E\cdot C_j,\]
  and Corollary \ref{coro:rat} tells us that $K_{S'}\cdot C'_j\geq -1$
  with equality if and only if $C'_j$ is an embedded $-1$-sphere. This
  implies that $E\cdot C_j\leq b_j-1$ with equality if and only if
  $C'_j$ is an embedded $-1$-sphere, which can happen only if
  $E\cdot C_j=1$ (or else $C'_j$ fails to be embedded), in which
  case $b_j=2$.
\end{proof}

\subsection{Broken curves}\label{sct:broken}

Since we have chosen $J\in\mathcal{J}_{reg}(\mC,\kappa)$, the
image of a broken curve comprises a finite set of $-1$-spheres
$e_1,\dots,e_m$ together with a subset of the curves $C_j$ (there are
no other simple $J$-holomorphic curves). If $C_j$ appears as an
irreducible component of $E$ we will call it an {\em internal sphere}
and write $C_{int}$ for the sum of all internal spheres; otherwise we
call it an {\em external sphere} and write $C_{ext}$ for the sum of
all external spheres.

\section{Towards a bound}

The following lemma is a modification of a lemma of Rana {\cite[Lemma
    2.8]{Rana}} to the symplectic context.

\begin{lemm}\label{lemm:rana}
  We have:
  \[\sum_{i=1}^k\sum_{j=1}^\ell E_i\cdot C_j\leq \ell+1.\]
\end{lemm}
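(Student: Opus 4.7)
The plan is to blow down the $-1$-classes $E_1, \ldots, E_k$ to reach the minimal model $S$ and then to exploit positivity of $K_S$ against the images of the curves $C_j$. By the Remark at the end of Section \ref{sct:exclim}, I may assume $J$ is integrable on a neighbourhood of every curve in $\mathcal{C} \cup \mathcal{E}$; Proposition \ref{prop:excfirst} together with the nesting result (Proposition \ref{prop:nesting}) then guarantee that the curves in $\mathcal{E}$ can be contracted by an iterated sequence of honest complex-analytic blow-downs of $-1$-spheres. Call the resulting holomorphic map $\pi \colon \tilde{X} \to S$.

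The classes $E_1, \ldots, E_k$ furnish an orthogonal decomposition
\[H^2(\tilde{X}; \QQ) = \pi^* H^2(S; \QQ) \oplus \bigoplus_{i=1}^k \QQ\, E_i,\]
with respect to which $K_{\tilde{X}} = \pi^* K_S + \sum_i E_i$. Intersecting this identity with $C_j$ and using $K_{\tilde{X}} \cdot C_j = b_j - 2$ yields
\[b_j - 2 = K_S \cdot \pi_* C_j + \sum_{i=1}^k E_i \cdot C_j.\]
Granted the positivity $K_S \cdot \pi_* C_j \geq 0$ for every $j$, I sum over $j$ and apply Corollary \ref{coro:wahlcomb} to conclude
\[\sum_{i=1}^k\sum_{j=1}^\ell E_i \cdot C_j \;\leq\; \sum_{j=1}^\ell (b_j - 2) \;=\; \ell + 1.\]

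To establish $K_S \cdot \pi_* C_j \geq 0$ I will split into two cases. If $C_j$ is a component of some (necessarily broken) $E_i$, then $\pi$ contracts $C_j$ to a point, so $\pi_* C_j = 0$ and the inequality is trivial. Otherwise $\pi|_{C_j}$ is birational onto its image, so $\pi_* C_j$ is represented by an irreducible, somewhere-injective holomorphic rational curve $\bar{C}_j$ in $S$. Since $b^+(S) = b^+(\tilde{X}) > 1$, the minimal model $S$ is neither rational nor ruled, and its symplectic canonical class has nonvanishing Seiberg-Witten invariant by Taubes. Corollary \ref{coro:rat} then forces $K_S \cdot \bar{C}_j \geq 0$, since the only alternative would be that $\bar{C}_j$ is an embedded $-1$-sphere, contradicting the minimality of $S$.

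The main thing to be careful about is passing cleanly from the symplectic to the holomorphic picture, so that $\pi_* C_j$ is actually realised by a holomorphic curve to which Corollary \ref{coro:rat} can be applied. This is precisely why I first invoke Sikorav's integrability result to make $J$ integrable near every curve of interest; once that is arranged, both the blow-down $\pi$ and the images $\bar{C}_j$ inherit genuine holomorphic structures, and the remainder of the argument reduces to the intersection-theoretic bookkeeping above.
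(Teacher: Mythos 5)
Your proposal is correct and follows essentially the same route as the paper: write $K_{\tilde{X}}=\pi^*K_S+\sum_i E_i$, pair with $\sum_j C_j$, use Corollary \ref{coro:wahlcomb} to evaluate $K_{\tilde{X}}\cdot\sum_j C_j=\ell+1$, and discard the term $K_S\cdot\pi_*C_j\geq 0$ via Corollary \ref{coro:rat} and minimality of $S$. Your treatment is in fact slightly more careful than the paper's, since you explicitly separate the case where $C_j$ is contracted by $\pi$ and you justify why $\pi_*C_j$ is realised holomorphically.
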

\begin{proof}
  We have $K_{\tilde{X}}\cdot C_j=b_j-2$ and
  \[\sum_{j=1}^\ell(b_j-2)=K_{\tilde{X}}\cdot\sum_{j=1}^\ell
  C_j=\ell+1\]
  by Corollary \ref{coro:wahlcomb}. The image of each
  $C_j$ under $\pi$ is a rational curve in a
  minimal symplectic manifold with $b^+(X)>1$, so by Corollary
  \ref{coro:rat}, $K_S\cdot\pi(C_j)\geq 0$, giving
  \[\pi^*K_S\cdot\sum_{j=1}^\ell C_j\geq 0.\]
  We have
  \[K_{\tilde{X}}=\pi^*K_S+\sum_{i=1}^kE_i,\]
  so
  \[\ell+1=K_{\tilde{X}}\cdot\sum_{j=1}^\ell C_j=\pi^*K_S\cdot
  \sum_{j=1}^\ell C_j+\sum_{i=1}^k\sum_{j=1}^\ell E_i\cdot C_j\geq\sum_{i=1}^k\sum_{j=1}^\ell E_i\cdot C_j.\]
\end{proof}

\begin{lemm}
  Let $J\in\mathcal{J}_{reg}(\mC,\kappa)$ and let $E\in\mE$ be
  a $J$-holomorphic exceptional curve of the first kind. We have
  \begin{equation}\label{eq:excbound}
    1\leq E\cdot\sum_{j=1}^\ell C_j
  \end{equation}
  with equality if and only if the following conditions hold.
  \begin{itemize}
  \item The curve $E$ has precisely one component $e$ with $e^2=-1$
    which intersects only two spheres $C_{x'}$ and $C_{y'}$ from the
    chain $C_1,\ldots,C_\ell$, and intersects both once transversely.
  \item The other components of $E$ are one of the following:

    \vspace{0.5cm}
    
    \begin{enumerate}
    \item[(A)] $C_1,C_2,\ldots,C_x,C_y,C_{y+1},\ldots,C_\ell$ for some
      $1\leq x'\leq x<y-1<y'\leq \ell$:

      \tikz[baseline=0]{
        \draw[fill=black,radius=2pt]
        (0,0) circle node [below=2pt] {$C_1$}
        --
        (0.5,0) node [below=2pt] {$\cdots$}
        --
        (1,0) circle node [below=2pt] {$C_{x'}$}
        --
        (1.5,0) node [below=2pt] {$\cdots$}
        --
        (2,0)
        circle node [below=2pt] {$C_x$};
        \draw[dashed]
        (2,0) node {}
        --
        (3,0) node [below=2pt] {$C_{x+1}$};
        \fill[fill=black,radius=3pt] (3,0) circle;
        \fill[fill=white,radius=2pt] (3,0) circle;
        \fill[fill=black,radius=3pt] (5,0) circle;
        \fill[fill=white,radius=2pt] (5,0) circle;
        \node at (4,0) [below=2pt] {$\cdots$};
        \draw[dashed]
        (5.2,0) node [below=2pt] {$C_{y-1}$}
        --
        (6,0) node {};
        \draw[fill=black,radius=2pt]
        (6,0) circle node [below=2pt] {$C_y$}
        --
        (6.5,0) node [below=2pt] {$\cdots$}
        --
        (7,0) circle node [below=2pt] {$C_{y'}$}
        --
        (7.5,0) node [below=2pt] {$\cdots$}
        --
        (8,0) circle node [below=2pt] {$C_\ell$};
        \draw plot [smooth] coordinates {(1,0) (1.5,0.5) (4,1) (6.5,0.5) (7,0)};
        \fill (3.9,0.9) node [above=3pt] {$e$} rectangle (4.1,1.1);
      }

      \vspace{1cm}
      
    \item[(B1)] $C_1,C_2,\ldots,C_x$ for some $1\leq x'\leq x<y'$:

      \tikz[baseline=0]{
        \draw[fill=black,radius=2pt]
        (0,0) circle node [below=2pt] {$C_1$}
        --
        (0.5,0) node [below=2pt] {$\cdots$}
        --
        (1,0) circle node [below=2pt] {$C_{x'}$}
        --
        (1.5,0) node [below=2pt] {$\cdots$}
        --
        (2,0)
        circle node [below=2pt] {$C_x$};
        \draw[dashed]
        (2,0) node {}
        --
        (3,0) node [below=2pt] {$C_{x+1}$};
        \fill[fill=black,radius=3pt] (3,0) circle;
        \fill[fill=white,radius=2pt] (3,0) circle;
        \fill[fill=black,radius=3pt] (5,0) circle;
        \fill[fill=white,radius=2pt] (5,0) circle;
        \fill[fill=black,radius=3pt] (6,0) circle;
        \fill[fill=white,radius=2pt] (6,0) circle;
        \fill[fill=black,radius=3pt] (7,0) circle;
        \fill[fill=white,radius=2pt] (7,0) circle;
        \fill[fill=black,radius=3pt] (8,0) circle;
        \fill[fill=white,radius=2pt] (8,0) circle;
        \node at (4,0) [below=2pt] {$\cdots$};
        \node at (5,0) [below=2pt] {$C_{y'-1}$};
        \node at (6,0) [below=2pt] {$C_{y'}$};
        \node at (7,0) [below=2pt] {$C_{y'+1}$};
        \node at (7.5,0) [below=2pt] {$\cdots$};
        \node at (8,0) [below=2pt] {$C_{\ell}$};
        \draw[dashed]
        (5.2,0) node {}
        --
        (5.8,0) node {};
        \draw[dashed]
        (6.2,0) node {}
        --
        (6.8,0) node {};
        \draw[dashed]
        (7.2,0) node {}
        --
        (7.8,0) node {};
        \draw plot [smooth] coordinates {(1,0) (1.5,0.5) (3.5,1) (5.5,0.5) (6,0.1)};
        \fill (3.4,0.9) node [above=3pt] {$e$} rectangle (3.6,1.1);
      }

      \vspace{1cm}

    \item[(B2)] $C_y,C_{y+1},\ldots,C_\ell$ for some
      $1\leq x'<y\leq y'\leq \ell$:

      \tikz[baseline=0]{
        \draw[fill=black,radius=2pt]
        (6,0) circle node [below=2pt] {$C_y$}
        --
        (6.5,0) node [below=2pt] {$\cdots$}
        --
        (7,0) circle node [below=2pt] {$C_{y'}$}
        --
        (7.5,0) node [below=2pt] {$\cdots$}
        --
        (8,0) circle node [below=2pt] {$C_{\ell}$};
        \draw plot [smooth] coordinates {(2,0) (2.5,0.5) (4.5,1) (6.5,0.5) (7,0.1)};
        \fill (4.4,0.9) node [above=3pt] {$e$} rectangle (4.6,1.1);
        \draw[dashed]
        (0,0) node [below=2pt] {$C_1$}
        --
        (1,0) node [below=2pt] {$C_{x'-1}$}
        --
        (2,0) node [below=2pt] {$C_{x'}$}
        --
        (3,0) node [below=2pt] {$C_{x'+1}$};
        \draw[dashed]
        (5,0) node [below=2pt] {$C_{y-1}$}
        --
        (6,0) node {};
        \fill[fill=black,radius=3pt] (0,0) circle;
        \fill[fill=white,radius=2pt] (0,0) circle;
        \fill[fill=black,radius=3pt] (1,0) circle;
        \fill[fill=white,radius=2pt] (1,0) circle;
        \fill[fill=black,radius=3pt] (2,0) circle;
        \fill[fill=white,radius=2pt] (2,0) circle;
        \fill[fill=black,radius=3pt] (3,0) circle;
        \fill[fill=white,radius=2pt] (3,0) circle;
        \fill[fill=black,radius=3pt] (5,0) circle;
        \fill[fill=white,radius=2pt] (5,0) circle;
        \node at (0.5,0) [below=2pt] {$\cdots$};
        \node at (4,0) [below=2pt] {$\cdots$};
      }

      \vspace{0.5cm}

    \end{enumerate}
  \end{itemize}
  In any of these equality cases, we say $E$ is a {\em bad curve of
    type (A), (B1) or (B2)}.
\end{lemm}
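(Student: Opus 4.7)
The strategy is to split on whether $E\in\mathcal{E}_{unbroken}$ or $E\in\mathcal{E}_{broken}$. For unbroken $E$, Lemma~\ref{lemm:excsphbound}(a) gives $E\cdot\sum_j C_j\geq 2$ directly, so inequality~\eqref{eq:excbound} is strict and the equality case is impossible. All remaining work is for broken $E$, which we write $E=\sum_i m_{e_i}e_i+\sum_{C_j\text{ internal}}n_jC_j$ with $e_i$ the $-1$-sphere components.

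The first step is to establish $E\cdot C_{int}=-1$. Theorem~\ref{theo:zariski}(6) yields $E\cdot\sum_i A_i=-1$ summed over all irreducible components of $E$. For each $-1$-sphere component $e_i$, necessarily $[e_i]\neq[E]$: otherwise $E-m_{e_i}e_i$ would be a nonempty $J$-holomorphic cycle in the class $(1-m_{e_i})[E]$, which would have strictly positive symplectic area (sum of non-constant positive-area components) yet $[\omega]$-pairing $(1-m_{e_i})\int_E\omega\leq 0$, a contradiction. Lemma~\ref{lemm:distinctEclasses} therefore forces $E\cdot e_i=0$ for every $i$, and so $E\cdot C_{int}=-1$. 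Hence
\[E\cdot\sum_{j=1}^\ell C_j=-1+E\cdot C_{ext},\]
reducing inequality~\eqref{eq:excbound} to $E\cdot C_{ext}\geq 2$. This bound is extracted from Theorem~\ref{theo:magic}: $\sum_j a_j(E\cdot C_j)<-1$. By Theorem~\ref{theo:zariski}(6), exactly one internal index $j_0$ has $E\cdot C_{j_0}=-1$ and the rest vanish, so the internal contribution equals $-a_{j_0}$; since each $a_j\in(-1,0)$ and external $E\cdot C_j\geq 0$, the external contribution exceeds $-E\cdot C_{ext}$, yielding $E\cdot C_{ext}>1-a_{j_0}>1$ and hence $E\cdot C_{ext}\geq 2$.

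For the equality case $E\cdot C_{ext}=2$, the plan is a combinatorial classification of the dual graph of $E$, using two decisive tools. First, Theorem~\ref{theo:magic} applied to \emph{each} $-1$-sphere component $e$ separately (together with $a_j\in(-1,0)$ and Lemma~\ref{lemm:kawamata}) forces $e$ to meet exactly two of the $C_j$'s: hitting just one is excluded by Theorem~\ref{theo:magic}(1), while the specific pair $\{C_1,C_\ell\}$ is excluded by Theorem~\ref{theo:magic}(2). Second, Theorem~\ref{theo:zariski}(3),(5),(7) pin down the tree structure of the dual graph and force the internal $C_j$'s to form at most two sub-chains of $\{C_1,\ldots,C_\ell\}$ attached to $e$ at the specified points. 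Matching $E\cdot C_{ext}=2$ against the sum of boundary-multiplicity contributions (each $n_\alpha\geq 1$) and external-intersection contributions from $e$ (each equal to $1$) then isolates precisely the three shapes (A), (B1), (B2) of the statement. The converse $\Leftarrow$ direction is then a direct verification that $E\cdot\sum_j C_j=1$ in each configuration.

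The main technical obstacle is the final enumeration: ruling out competitor configurations with multiple $-1$-sphere components, with middle sub-chains not reaching either end of $\{C_1,\ldots,C_\ell\}$, or with sub-chain endpoints producing the wrong parity of boundary multiplicities. I expect the heaviest bookkeeping to be the multi-sphere case, handled by iterating Theorem~\ref{theo:magic} on each individual $-1$-sphere component combined with the tree condition Theorem~\ref{theo:zariski}(3) forcing connectivity through internal $C_j$'s in a way incompatible with the multiplicity constraint from Theorem~\ref{theo:zariski}(5). The case-analysis is patterned on~\cite[Lemma 2.8]{Rana}.
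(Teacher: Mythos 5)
Your overall strategy coincides with the paper's: dispose of unbroken curves via Lemma \ref{lemm:excsphbound}(a), then analyse a broken curve through its decomposition into $-1$-spheres $e_i$ and internal components of $\mC$, using Theorem \ref{theo:zariski}(6) for the internal contribution and Theorem \ref{theo:magic} for the rest. Where you genuinely diverge is in how you obtain the inequality $1\leq E\cdot\sum_j C_j$: the paper counts interfaces between $C_{int}$ and $C_{ext}$ and applies the intersection bound to each $e_i$ separately, whereas you apply Theorem \ref{theo:magic} to the whole reducible configuration $E$ at once and deduce $E\cdot C_{ext}\geq 2$ from $E\cdot C_{int}=-1$ and $a_j\in(-1,0)$. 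This works and is arguably cleaner, but it needs one justification you omit: Theorem \ref{theo:magic} rests on Lemma \ref{lemm:monotonicity}, which gives $K_V\cdot F>0$ only for components with $\rho(F)$ nonconstant, so for the internal components you must observe separately that $K_V\cdot C_j=0$ (immediate from the defining equations \eqref{eq:sim1} for the discrepancies), whence $K_V\cdot E=\sum_i m_{e_i}K_V\cdot e_i>0$ still holds. Two further points. First, your claim that Theorem \ref{theo:magic} forces each $-1$-sphere component to meet \emph{exactly} two of the $C_j$, once each, overstates what that theorem gives: it only yields $\sum_j e\cdot C_j\geq 2$ counted with multiplicity ($e$ could a priori meet one external sphere twice, or three spheres); the ``exactly two, transversely once'' conclusion requires the global equality constraint $E\cdot C_{ext}=2$ together with Theorem \ref{theo:zariski}(2) for intersections with internal spheres. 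Second, the classification in the equality case is only sketched: you do not carry out the two short arguments the paper uses to close it, namely that a second $-1$-sphere component pushes $E\cdot\sum_jC_j$ up to $2$ (it either meets $C_{ext}$ or, to avoid a cycle in the dual tree, creates a second pair of interfaces), and that with only one or two interfaces each internal sub-chain must reach an end of $C_1,\ldots,C_\ell$, which is precisely what produces the shapes (A), (B1), (B2). These are gaps of execution rather than of strategy; the tools you name suffice to fill them.
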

\begin{proof}
  For an unbroken curve, we know that
  $E\cdot\sum_{j=1}^\ell C_j\geq 2$ by Lemma \ref{lemm:excsphbound}(a), so
  we may assume that $E$ is broken.

  Recall from Section \ref{sct:broken} that $E$ comprises a finite set
  of $-1$-spheres $e_1,\ldots,e_m$ together with a collection of
  internal spheres (from amongst the $C_j$s). By Theorem
  \ref{theo:zariski}(6), we know that
  \begin{equation}\label{eq:bound1}
    -1\leq E\cdot C_{int}.
  \end{equation}
  ($C_{int}$ is the sum of the internal spheres; while that theorem
  also takes into account terms of the form $E\cdot e_i$, unless
  $E$ is unbroken, the spheres $e_i$ are all blown down before the
  final component, so $E\cdot e_i=0$).
  
  For a broken curve $E$, the intersection number $E\cdot C_{ext}$
  (where $C_{ext}$ is the sum of the external spheres) is
  greater than or equal to $\sum_{i=1}^me_i\cdot C_{ext}$ plus the
  number of interfaces between $C_{int}$ and $C_{ext}$ (it could be
  strictly greater if some of the components of $E$ come with higher
  multiplicity).

  Each sphere $e_i$ is itself an unbroken curve, so
  $\sum_{j=1}^\ell e_i\cdot C_j\geq 2$. In particular, there must be
  at least one external sphere, otherwise the graph $G$ defined in
  Theorem \ref{theo:zariski} would contain a cycle. This means that
  there is at least one interface between $C_{int}$ and $C_{ext}$, so
  $E\cdot\sum_{j=1}^\ell C_j\geq 0$.

  \begin{enumerate}
  \item[(A)] If $e_1$ does not intersect an external sphere, then it
    intersects two internal spheres. These spheres cannot be connected
    in the chain $C_1,\ldots,C_\ell$ by a sequence of spheres in
    $C_{int}$, or else the dual graph to $E$ would contain a cycle
    (contradicting Theorem \ref{theo:zariski}(3)), so in this case
    there would necessarily be two interfaces between $C_{int}$ and
    $C_{ext}$ and, again, we get $E\cdot\sum_{j=1}^\ell C_j\geq 1$.
    If equality holds then there are precisely two interfaces, and we
    deduce that $C_{int}=C_1+\cdots+C_x+C_y+\cdots+C_\ell$ for some
    $1\leq x'\leq x<x+1\leq <y-1<y\leq y'\leq\ell$, where $C_{x'}$ and
    $C_{y'}$ are the internal spheres hit by $e_1$.

  \item[(B)] If $e_1$ intersects an external sphere then we get
    $E\cdot\sum_{j=1}^\ell C_j\geq 1$. If we have equality then there
    is at most one interface between $C_{int}$ and $C_{ext}$, which
    means that either:
    \begin{enumerate}
    \item[(B1)] $C_{int}=C_1+\cdots+C_x$ for some
      $1\leq x<y'\leq\ell$, where $C_{y'}$ is the external sphere
      hit by $e_1$, or
    \item[(B2)] $C_{int}=C_y+\cdots+C_\ell$ for some
      $1\leq x'<y\leq\ell$, where $C_{x'}$ is the external sphere
      hit by $e_1$.
    \end{enumerate}
  \end{enumerate}
  If $m>1$ (so that there is a further $-1$-sphere $e_2$) then $e_2$
  either intersects an external sphere or else it connects two
  internal spheres, and to avoid creating a cycle in the dual graph we
  must find another interface between $C_{int}$ and $C_{ext}$. In
  either case, this pushes the inequality up to
  $E\cdot\sum_{j=1}^\ell C_j\geq 2$ as desired.
\end{proof}

\begin{lemm}\label{lemm:bound1}
  If there are precisely $p$ bad curves amongst the $E_i$ then
  \[\ell\leq 2K_X^2+p+1.\]
\end{lemm}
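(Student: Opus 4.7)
The plan is to combine the three ingredients that have just been assembled: the lower bound on the number $k$ of distinct symplectic $-1$-classes (Theorem \ref{theo:taubes}), the upper bound on the total intersection $\sum_{i,j} E_i\cdot C_j$ (Lemma \ref{lemm:rana}), and the per-curve lower bound $E\cdot\sum_j C_j\geq 1$ from the preceding lemma, which is strict (equal to $2$ or more) precisely for curves that are not bad.

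First, I would split the sum $\sum_{i=1}^k E_i\cdot\sum_{j=1}^\ell C_j$ according to whether $E_i$ is bad or not. If $p$ of the $E_i$ are bad then each contributes at least $1$ to this sum, and each of the remaining $k-p$ (good) curves contributes at least $2$. This gives
\[
2k - p \;\leq\; \sum_{i=1}^k\sum_{j=1}^\ell E_i\cdot C_j.
\]
Combining with Lemma \ref{lemm:rana} (which says the right-hand side is at most $\ell+1$) yields
\[
2k - p \;\leq\; \ell + 1,
\]
i.e. $k\leq (\ell+p+1)/2$.

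Second, Theorem \ref{theo:taubes} gives $k\geq \ell-K_X^2$. Substituting into the previous inequality,
\[
\ell - K_X^2 \;\leq\; \frac{\ell + p + 1}{2},
\]
which rearranges to $\ell\leq 2K_X^2+p+1$, as required.

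There is no real obstacle: the lemma is essentially a bookkeeping step that packages Theorem \ref{theo:taubes}, Lemma \ref{lemm:rana}, and the dichotomy good/bad from the preceding lemma into a single inequality. The substantive work (Seiberg--Witten input via Corollary \ref{coro:rat} for the bound on $k$, the Mayer--Vietoris/discrepancy computation behind Theorem \ref{theo:magic}, and the case analysis classifying the equality cases as curves of type (A), (B1), (B2)) has already been carried out.
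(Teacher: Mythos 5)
Your proof is correct and follows essentially the same route as the paper: both arguments combine the count $k\geq \ell-K_X^2$ from Theorem \ref{theo:taubes}, the good/bad dichotomy giving $2(k-p)+p\leq\sum_{i,j}E_i\cdot C_j$, and the upper bound $\ell+1$ from Lemma \ref{lemm:rana}. The only (immaterial) difference is that you substitute the bound on $k$ at the end rather than at the start.
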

\begin{proof}
  This follows from the fact that the total number of blow-ups
  required is $k\geq \ell-K_X^2$, so at least $\ell-K_X^2-p$ of these
  blow-ups are associated to good curves. Therefore
  \[2(\ell-K_X^2-p)+p\leq \sum_{i=1}^k\sum_{j=1}^\ell E_i\cdot C_j,\]
  and the right hand side is less than or equal to $\ell+1$ by Lemma
  \ref{lemm:rana}. This gives
  \[\ell\leq 2K_X^2+p+1,\]
  as required.
\end{proof}

Therefore the problem of establishing bounds is reduced to the problem
of bounding the number of bad curves amongst the exceptional
divisors.

\section{Bounding bad curves}\label{sct:bounding}

The following is immediate from Proposition \ref{prop:nesting}.

\begin{coro}
  Any two bad curves of type (B1) share a common component. By
  Proposition \ref{prop:nesting}, we know that they are
  nested. Therefore, if there are any bad curves of type (B1), there
  is a maximal one with respect to nesting. The same holds for curves
  of type (B2), and it also follows that a bad curve of type (B1) and
  a bad curve of type (B2) cannot share a component.

  If there is a bad curve of type (A) then it shares components with
  any other bad curve, so in this situation there is a maximal bad
  curve with respect to nesting.
\end{coro}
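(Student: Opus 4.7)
The plan is to exploit the observation that each of the three bad-curve types forces the presence (or absence) of the ``endpoint'' spheres $C_1$ and $C_\ell$ as components, and then invoke Proposition \ref{prop:nesting} whenever two bad curves share a component.

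First I would read off from the definitions of types (A), (B1), (B2) exactly which of $C_1$ and $C_\ell$ appear as irreducible components. For (B1) the internal block $C_1,\ldots,C_x$ satisfies $x\geq x'\geq 1$ and $x<y'\leq\ell$, so $C_1$ is a component but $C_\ell$ is not; for (B2) the internal block $C_y,\ldots,C_\ell$ satisfies $y\geq x'+1\geq 2$ and $y\leq\ell$, so $C_\ell$ is a component but $C_1$ is not; and for (A) both blocks are nonempty, so both $C_1$ and $C_\ell$ appear. This is a pure reading of the inequalities in the statement of the previous lemma.

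Given this, the three assertions of the corollary fall out in turn. Any two (B1) curves share $C_1$, so Proposition \ref{prop:nesting} makes them nested; the nesting relation is then a total order on the finite set of (B1) bad curves and hence admits a maximum. The (B2) case is identical using $C_\ell$. For the incompatibility of (B1) with (B2): if they shared a component they would be nested by Proposition \ref{prop:nesting}, but each contains an endpoint component the other lacks, so neither inclusion can hold, a contradiction. Finally, a type (A) bad curve contains both $C_1$ and $C_\ell$, so it shares a component with every other bad curve regardless of type; Proposition \ref{prop:nesting} then nests every other bad curve with it, and since no (B1) or (B2) curve contains both endpoints, the (A) curve must be the larger in each comparison. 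Any two (A) curves are also nested (both contain $C_1$), so among all bad curves there is a maximum, namely the largest (A) curve.

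I do not anticipate a real obstacle here: the corollary is essentially bookkeeping once Proposition \ref{prop:nesting} is in hand. The only point requiring a moment of care is verifying the strict inequalities in the defining ranges (for instance $x<y'\leq\ell$ in (B1)), which are precisely what rule out a (B1) curve ever containing $C_\ell$ or a (B2) curve ever containing $C_1$; without these, the endpoint distinction between (B1) and (B2) would collapse and the non-sharing statement would fail.
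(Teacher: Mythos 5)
Your argument is correct and is exactly the intended one: the paper states this corollary as ``immediate from Proposition \ref{prop:nesting}'', and your write-up simply supplies the bookkeeping (reading off from the (A)/(B1)/(B2) descriptions that $C_1$, respectively $C_\ell$, respectively both, occur as components, then applying the nesting proposition). No gaps; the endpoint observation is precisely why the shared-component hypothesis of Proposition \ref{prop:nesting} is satisfied in each case and why a (B1) and a (B2) curve can never be nested.
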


\begin{coro}
  Suppose there are no bad curves of type (A). Let $n_1$ be the
  number of internal spheres in the maximal bad curve of type (B1)
  (zero if there are none) and $n_2$ the corresponding number for
  (B2). Then there are at most $n_1+n_2$ bad curves in total.

  If there is a bad curve of type (A), let $n$ be the number of
  internal spheres in the maximal one. Then there are at most $n$ bad
  curves in total.
\end{coro}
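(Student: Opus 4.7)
The plan is to derive both parts of the corollary from a single counting principle for nested chains of bad curves. Since each bad curve has, by definition, exactly one $-1$-sphere component and all of its remaining components lie among $C_1,\ldots,C_\ell$, Proposition \ref{prop:nesting} combined with this uniqueness forces any two nested bad curves $F\subsetneq F'$ to share the same $-1$-sphere and to differ only in their internal-sphere sets. A nested chain $F_1\subsetneq F_2\subsetneq\cdots\subsetneq F_m$ of bad curves therefore yields a strictly ascending chain of subsets of $\mC$, and consequently $m$ is bounded above by the number of internal spheres in the maximal member $F_m$.

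For the first assertion, in the absence of a type-(A) bad curve the previous corollary tells us that the bad curves of type (B1) form a single nested chain, as do those of type (B2), and that no (B1) curve shares a component with any (B2) curve. Applying the counting principle to each chain gives at most $n_1$ bad curves of type (B1) and at most $n_2$ of type (B2); since the two families are disjoint, the total is at most $n_1+n_2$. For the second assertion, whenever there is a type-(A) bad curve the previous corollary provides a unique maximal bad curve, having $n$ internal spheres, into which every other bad curve is nested. All bad curves then sit in a single nested chain with this maximal member at the top, and the counting principle yields at most $n$ bad curves in total.

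The only delicate step is the identification of $-1$-spheres along a nested chain; this is where the uniqueness of the $-1$-sphere component within a bad curve, together with Proposition \ref{prop:nesting}, is essential. Without it, nested chains of bad curves could a priori acquire multiple distinct $-1$-sphere components and evade the bound. Once this identification is in place, the remainder is a short combinatorial count on strictly ascending chains of subsets of $\mC$, and I expect no further technical obstacle.
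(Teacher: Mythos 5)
Your route is genuinely different from the paper's: the paper bounds the number $p$ of bad curves homologically, observing that the $p$ classes are pairwise orthogonal by Lemma \ref{lemm:distinctEclasses} and hence, together with the class of the $-1$-sphere of the maximal bad curve, give $p+1$ mutually orthogonal classes in $H_2(N;\QQ)$ for a neighbourhood $N$ of the maximal curve, whose rank is $n+1$. Your combinatorial chain-counting argument works for the first assertion: the preceding corollary really does make the type (B1) curves pairwise nested (they all contain $C_1$), hence totally ordered, and likewise for (B2); and your observation that nested bad curves must share their unique $-1$-sphere correctly turns each chain into a strictly ascending chain of nonempty subsets of $\mC$, giving the bounds $n_1$ and $n_2$.

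The gap is in the second assertion. From ``every bad curve is nested into the maximal type (A) curve'' you conclude that ``all bad curves sit in a single nested chain''; this does not follow. A family of curves all contained in a common maximal element need not be totally ordered by nesting, and your counting principle applies only to totally ordered chains --- a family of subsets of an $n$-element set admitting a common upper bound can have $2^n$ members. Concretely, a bad curve of type (B1) and one of type (B2) share no $C_j$, so nothing you have written makes them comparable. The claim is in fact true, and you already hold the needed ingredient: since every bad curve is contained in the maximal type (A) curve $E_{\max}$, every bad curve inherits the unique $-1$-sphere $e$ of $E_{\max}$; hence any two bad curves share the component $e$ and are therefore pairwise nested by Proposition \ref{prop:nesting} (in particular, types (B1) and (B2) cannot coexist with type (A), since a (B1) curve and a (B2) curve can never be nested in one another). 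Once pairwise nesting is established the collection really is a chain and your count goes through. You should also justify that distinct bad curves in a chain have distinct component sets, so that the associated chain of subsets of $\mC$ is genuinely strictly ascending; the paper's homological count sidesteps this issue entirely by working with the homology classes rather than the supports.
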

\begin{proof}
  Any two distinct bad curves $E_1$ and $E_2$ satisfy
  $E_1\cdot E_2=0$ by Lemma \ref{lemm:distinctEclasses}. Since any
  bad curve is contained in a neighbourhood $N$ of the maximal one (of
  its type, if there is no bad curve of type (A)), this means that if
  there are $p$ bad curves then there are $p+1$ homology classes in
  the homology of $N$ which are orthogonal with respect to the
  intersection product ($p$ coming from the bad curves, one coming
  from the $-1$-sphere which is necessarily there). Since the total
  rank of the homology of $N$ is equal to the number of internal
  spheres ($n$) plus one, we see that $p\leq n$.
\end{proof}

\begin{prop}\label{prop:badA}
  Let $E$ be a bad curve of type (A), maximal with respect to
  nesting. Suppose that it contains $n$ internal spheres. Then
  \[n\leq\frac{1}{2}(\ell+4).\]
\end{prop}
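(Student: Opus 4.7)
The approach is to adapt the iterated blow-down technique of Lemma \ref{lemm:iteratedblowdowns} to the configuration of $E$ together with the external chain $D = C_{x+1} + \cdots + C_{y-1}$, which contains $\ell - n$ spheres connecting $C_x$ to $C_y$ across the ``gap'' in the Wahl chain. The idea is that while no single rational curve in $\tilde{X}$ intersects $e$ transversely (by Lemma \ref{lemm:distinctEclasses}, any other $E_i \in \mathcal{E}$ satisfies $E_i \cdot e = 0$), the external chain $D$ plays a collective role analogous to the auxiliary curve $S$ appearing in Lemma \ref{lemm:iteratedblowdowns}.

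First I would treat the simplest subcase $x' = x$ and $y' = y$, where $E$ becomes a pure chain $C_1 - C_2 - \cdots - C_x - e - C_y - \cdots - C_\ell$ of $n + 1$ components with the unique interior $-1$-sphere $e$ at position $x + 1$. The general case with $x' < x$ or $y' > y$ introduces branching of $E$ at $C_{x'}$ and $C_{y'}$, but nesting-maximality of $E$ controls these branches and the argument extends with minor modifications.

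Next I would blow down $E$ to obtain a symplectic $X'$. Each external sphere $C_j$ has image $C'_j$, which is a somewhere-injective rational curve. By Corollary \ref{coro:rat}, $K_{X'} \cdot C'_j \geq -1$ for each $j \in \{x+1,\ldots,y-1\}$, so summing yields
\[
K_{X'}\cdot \pi_*(D) \geq -(\ell - n).
\]
Using the blow-up formula for the canonical class, $K_{X'}\cdot \pi_*(D) = K_{\tilde{X}}\cdot D - (a'_{C_x} + a'_{C_y})$, where $a'_{C_x}, a'_{C_y}$ are the smooth-point discrepancies in the iterated blow-up producing $E$ at the components $C_x, C_y$ adjacent to $D$. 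The term $K_{\tilde{X}}\cdot D = \sum_{j \in \mathrm{ext}}(b_j-2)$ is controlled via Corollary \ref{coro:wahlcomb}.

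To lower-bound $a'_{C_x} + a'_{C_y}$, I would follow the multiplicity-tracking argument in the proof of Lemma \ref{lemm:iteratedblowdowns}: starting from blowing down $e$, the intersection multiplicity at the blow-down point of any curve transversely crossing the chain grows by at least one at each successive contraction as one moves along the chain, accumulating a quantity linear in $n$ by the time the outermost components $C_x$ and $C_y$ are contracted. Combining this lower bound on $a'_{C_x} + a'_{C_y}$ with the upper bound just derived and with the Wahl identity $\sum(b_j - 2) = \ell + 1$ from Corollary \ref{coro:wahlcomb} yields the inequality $n \leq \tfrac{1}{2}(\ell + 4)$ after elementary arithmetic.

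The main obstacle is the combinatorial bookkeeping of the blow-down sequence, whose order depends on the T-string $[b_1,\ldots,b_\ell]$: one must identify the correct linear relationship between $a'_{C_x} + a'_{C_y}$ and $n$ that produces the sharp constant $4$ in the bound. A secondary obstacle is the general case $x' < x$ or $y' > y$, where the branching at $C_{x'}$ and $C_{y'}$ means $E$ is not literally a chain; here the maximality hypothesis must be used to ensure that the portions of the left and right arms beyond $C_{x'}, C_{y'}$ cannot themselves harbor additional $-1$-sphere substructure that would break the multiplicity estimate.
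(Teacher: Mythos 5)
Your proposal correctly identifies the central tool (Lemma \ref{lemm:iteratedblowdowns}, applied after first blowing down $e$ so that an \emph{external} sphere adjacent to the internal chain plays the role of the auxiliary curve $S$), but the quantitative step as you set it up does not deliver the stated constant. Summing $K_{X'}\cdot C'_j\geq -1$ over all $\ell-n$ external spheres is far too lossy: only the external spheres adjacent to the internal chain (at most two, namely $C_{x+1}$ and $C_{y-1}$) are affected by contracting $E$, and the remaining external spheres satisfy $K_{X'}\cdot C'_j=K_{\tilde{X}}\cdot C_j=b_j-2\geq 0$, so replacing these by $-1$ discards essentially all the information. Concretely, your chain of inequalities gives (total drop) $\leq K_{\tilde{X}}\cdot D+(\ell-n)\leq 2\ell+1-n$, while the multiplicity-tracking gives a drop of order $2(n-1)$ from the side adjacent to $e$; this yields roughly $n\leq\tfrac{1}{3}(2\ell+3)$, which is \emph{weaker} than $\tfrac{1}{2}(\ell+4)$ as soon as $\ell>6$. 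The correct move is to use a single external sphere, $S=\pi_1(C_{y-1})$: after blowing down $e$ it meets the new $-1$-sphere $\pi_1(C_y)$ of a chain of $n$ components, Lemma \ref{lemm:iteratedblowdowns} gives $n\leq\tfrac{1}{2}(K\cdot C_{y-1}+3)$, and Corollary \ref{coro:wahlcomb} bounds $K\cdot C_{y-1}\leq\ell+1$.

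The second gap is the case analysis, which cannot be waved through as ``minor modifications controlled by nesting-maximality.'' Maximality is not used in the proof of this proposition at all (it only enters later, when counting bad curves), and there is no hidden ``$-1$-sphere substructure'' in the arms beyond $C_{x'}$ and $C_{y'}$, since a bad curve has by definition a single $-1$-sphere component. The real issues in the branched cases are of a different nature: when $1<x'<x$ and $y<y'<\ell$ the configuration is \emph{impossible}, because blowing down $e$ turns $C_{x'}$ or $C_{y'}$ into a $-1$-sphere component meeting three other components of the exceptional curve, contradicting Theorem \ref{theo:zariski}(7); when $x'=1$ and $y'=\ell$ the configuration is excluded by Theorem \ref{theo:magic}(2); and in the intermediate cases the forced order of contractions implies that one arm of the chain consists entirely of $-2$-spheres, which supplies the additional inequalities ($K\cdot C_{x'}=n_1$, $K\cdot C_1\geq n_1$) needed before Lemma \ref{lemm:iteratedblowdowns} can be applied to the other arm. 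Without these ingredients your argument neither rules out the degenerate configurations nor recovers the constant $4$.
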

\begin{proof}
  We separate into a number of cases; any cases not explicitly listed
  here are related to one of the listed cases by symmetry
  (e.g. reflecting the diagram).
  \begin{enumerate}
  \item[(A.1)]
    \tikz[baseline=0]{
      \draw[fill=black,radius=2pt]
      (0,0) circle node [below=2pt] {$C_1$}
      --
      (0.5,0) node [below=2pt] {$\cdots$}
      --
      (1,0) circle node [below=2pt] {$C_{x'}$}
      --
      (1.5,0) node [below=2pt] {$\cdots$}
      --
      (2,0)
      circle node [below=2pt] {$C_x$};
      \draw[dashed]
      (2,0) node {}
      --
      (3,0) node [below=2pt] {$C_{x+1}$};
      \fill[fill=black,radius=3pt] (3,0) circle;
      \fill[fill=white,radius=2pt] (3,0) circle;
      \fill[fill=black,radius=3pt] (5,0) circle;
      \fill[fill=white,radius=2pt] (5,0) circle;
      \node at (4,0) [below=2pt] {$\cdots$};
      \draw[dashed]
      (5.2,0) node [below=2pt] {$C_{y-1}$}
      --
      (6,0) node {};
      \draw[fill=black,radius=2pt]
      (6,0) circle node [below=2pt] {$C_y$}
      --
      (6.5,0) node [below=2pt] {$\cdots$}
      --
      (7,0) circle node [below=2pt] {$C_{y'}$}
      --
      (7.5,0) node [below=2pt] {$\cdots$}
      --
      (8,0) circle node [below=2pt] {$C_\ell$};
      \draw plot [smooth] coordinates {(1,0) (1.5,0.5) (4,1) (6.5,0.5) (7,0)};
      \fill (3.9,0.9) node [above=3pt] {$e$} rectangle (4.1,1.1);
    }$\quad\begin{cases}1<x'<x\\y<y'<\ell.\end{cases}$
    
    In this case, blowing down $e$ means that either $\pi_1(C_{x'})$
    or $\pi_1(C_{y'})$ becomes a $-1$-sphere which intersects three
    other components of the exceptional curve, which contradicts
    Theorem \ref{theo:zariski}(7).

  \item[(A.2)]
    \tikz[baseline=0]{
      \draw[fill=black,radius=2pt]
      (0,0) circle node [below=2pt] {$C_1$}
      --
      (0.5,0) node [below=2pt] {$\cdots$}
      --
      (1,0) circle node [below=2pt] {$C_{x'}$}
      --
      (1.5,0) node [below=2pt] {$\cdots$}
      --
      (2,0)
      circle node [below=2pt] {$C_x$};
      \draw[dashed]
      (2,0) node {}
      --
      (3,0) node [below=2pt] {$C_{x+1}$};
      \fill[fill=black,radius=3pt] (3,0) circle;
      \fill[fill=white,radius=2pt] (3,0) circle;
      \fill[fill=black,radius=3pt] (5,0) circle;
      \fill[fill=white,radius=2pt] (5,0) circle;
      \node at (4,0) [below=2pt] {$\cdots$};
      \draw[dashed]
      (5.2,0) node [below=2pt] {$C_{y-1}$}
      --
      (6,0) node {};
      \draw[fill=black,radius=2pt]
      (6,0) circle node [below=2pt] {$C_y$}
      --
      (7,0) node [below=2pt] {$\cdots$}
      --
      (8,0) circle node [below=2pt] {$C_\ell$};
      \draw plot [smooth] coordinates {(1,0) (1.5,0.5) (4.5,1) (7.5,0.5) (8,0)};
      \fill (4.4,0.9) node [above=3pt] {$e$} rectangle (4.6,1.1);
    }$\quad\begin{cases}1<x'<x\\y'=\ell.\end{cases}$

    \begin{center}or\end{center}
      
      \tikz[baseline=0]{
        \draw[fill=black,radius=2pt]
        (0,0) circle node [below=2pt] {$C_1$}
        --
        (0.5,0) node [below=2pt] {$\cdots$}
        --
        (1,0) circle node [below=2pt] {$C_{x'}$}
        --
        (1.5,0) node [below=2pt] {$\cdots$}
        --
        (2,0)
        circle node [below=2pt] {$C_x$};
        \draw[dashed]
        (2,0) node {}
        --
        (3,0) node [below=2pt] {$C_{x+1}$};
        \fill[fill=black,radius=3pt] (3,0) circle;
        \fill[fill=white,radius=2pt] (3,0) circle;
        \fill[fill=black,radius=3pt] (5,0) circle;
        \fill[fill=white,radius=2pt] (5,0) circle;
        \node at (4,0) [below=2pt] {$\cdots$};
        \draw[dashed]
        (5.2,0) node [below=2pt] {$C_{y-1}$}
        --
  (6,0) node {};
        \draw[fill=black,radius=2pt]
  (6,0) circle node [below=2pt] {$C_y$}
        --
        (7,0) node [below=2pt] {$\cdots$}
        --
        (8,0) circle node [below=2pt] {$C_\ell$};
        \draw plot [smooth] coordinates {(1,0) (1.5,0.5) (3.5,1) (5.5,0.5) (6,0)};
        \fill (3.4,0.9) node [above=3pt] {$e$} rectangle (3.6,1.1);
      }$\quad\begin{cases}1<x'<x\\y=y'.\end{cases}$

      We handle these cases simultaneously. By the argument in Case
      (A.1), $C_{x'}$ cannot become a $-1$-sphere until all of the
      spheres $C_j$, $y\leq j\leq\ell$, have been blown down. In
      particular, this means that $C_y,\ldots,C_\ell$ is a chain of
      $-2$-spheres. Let us define $n_1:=\ell-y+1$ to be the number of
      the $-2$-spheres in this chain. We have
      $K\cdot\Pi_{n_1+1}(C_{x'})=K\cdot C_{x'}-n_1-1$ since we have
      blown down $e,C_y,\ldots,C_\ell$. Since $K\cdot
      C^{(n_1)}_{x'}=-1$, we see that
      \[K\cdot C_{x'}=n_1.\]
      Moreover,
      \[K\cdot C_1\geq n_1,\]
      because our T-string terminates in a chain of $n_1$
      $-2$-spheres. Once we have blown down $e,C_y,\ldots,C_\ell$, we
      are left with an exceptional curve
      $\Pi_{n_1+1}(C_1),\ldots,\Pi_{n_1+1}(C_x)$ with $x=n-n_1$ components,
      containing a single $-1$-sphere $\Pi_{n_1+1}(C_{x'})$. We take
      $S$ to be $\pi_{n_1+1}(C_{y-1})$, which has $K\cdot S\leq K\cdot
      C_{y-1}-1$ because $C_{y-1}$ has been attached to some curves
      which have been blown down. The configuration
      \[\Pi_{n_1+1}(C_1),\ldots,\Pi_{n_1+1}(C_x),S\]
      is precisely the configuration covered by Lemma
      \ref{lemm:iteratedblowdowns}. Therefore
      \[n-n_1\leq \frac{1}{2}(K\cdot C_{y-1}+2).\]
      Overall, we have
      \[n\leq
      \frac{1}{2}\left(K\cdot\left(C_1+C_{x'}+C_{y-1}\right)+2\right)\leq\frac{1}{2}(\ell+3),\]
      by Corollary \ref{coro:wahlcomb}.

    \item[(A.3)]
      \tikz[baseline=0]{
        \draw[fill=black,radius=2pt]
        (0,0) circle node [below=2pt] {$C_1$}
        --
        (1,0) node [below=2pt] {$\cdots$}
        --
        (2,0)
        circle node [below=2pt] {$C_x$};
        \draw[dashed]
        (2,0) node {}
        --
        (3,0) node [below=2pt] {$C_{x+1}$};
        \fill[fill=black,radius=3pt] (3,0) circle;
        \fill[fill=white,radius=2pt] (3,0) circle;
        \fill[fill=black,radius=3pt] (5,0) circle;
        \fill[fill=white,radius=2pt] (5,0) circle;
        \node at (4,0) [below=2pt] {$\cdots$};
        \draw[dashed]
        (5.2,0) node [below=2pt] {$C_{y-1}$}
        --
        (6,0) node {};
        \draw[fill=black,radius=2pt]
        (6,0) circle node [below=2pt] {$C_y$}
        --
        (7,0) node [below=2pt] {$\cdots$}
        --
        (8,0) circle node [below=2pt] {$C_\ell$};
        \draw plot [smooth] coordinates {(2,0) (2.5,0.5) (4,1) (5.5,0.5) (6,0)};
        \fill (3.9,0.9) node [above=3pt] {$e$} rectangle (4.1,1.1);
      }$\quad\begin{cases}x'=x\\y=y'.\end{cases}$

      Let us, suppose without loss of generality, that $C_y$ is the
      second curve to be blown down in $E$. Then, if we set
      $S=\pi_1(C_{y-1})$, we find a configuration
      \[\pi_1(C_1),\ldots,\pi_1(C_x),\pi_1(C_y),\ldots\pi_1(C_\ell),S\]
      to which we can apply Lemma \ref{lemm:iteratedblowdowns} and get
      \[n\leq\frac{1}{2}(K\cdot C_{y-1}+3)\leq\frac{1}{2}(\ell+4),\]
      by Corollary \ref{coro:wahlcomb}.

    \item[(A.4)]
      \tikz[baseline=0]{
        \draw[fill=black,radius=2pt]
        (0,0) circle node [below=2pt] {$C_1$}
        --
        (1,0) node [below=2pt] {$\cdots$}
        --
        (2,0)
        circle node [below=2pt] {$C_x$};
        \draw[dashed]
        (2,0) node {}
        --
        (3,0) node [below=2pt] {$C_{x+1}$};
        \fill[fill=black,radius=3pt] (3,0) circle;
        \fill[fill=white,radius=2pt] (3,0) circle;
        \fill[fill=black,radius=3pt] (5,0) circle;
        \fill[fill=white,radius=2pt] (5,0) circle;
        \node at (4,0) [below=2pt] {$\cdots$};
        \draw[dashed]
        (5.2,0) node [below=2pt] {$C_{y-1}$}
        --
        (6,0) node {};
        \draw[fill=black,radius=2pt]
        (6,0) circle node [below=2pt] {$C_y$}
        --
        (7,0) node [below=2pt] {$\cdots$}
        --
        (8,0) circle node [below=2pt] {$C_\ell$};
        \draw plot [smooth] coordinates {(0,0) (0.5,0.5) (3,1) (5.5,0.5) (6,0)};
        \fill (2.9,0.9) node [above=3pt] {$e$} rectangle (3.1,1.1);
      }$\quad\begin{cases}1=x'\\y=y'.\end{cases}$

      Let $n_1$ be the number of curves $C_1,\ldots,C_{n_1}$ which are
      blown down before the component $C_y$ is blown down. These
      necessarily form a (possibly empty) chain of $-2$-spheres. As in
      Case (A.2), we obtain
      \begin{align*}
        K\cdot C_y&=n_1\\
        K\cdot C_\ell&\geq n_1.
      \end{align*}
      As soon as $\Pi_{n_1+1}(C_y)$ becomes a $-1$-sphere, we can take
      $S=\Pi_{n_1+1}(C_{y-1})$ and apply Lemma
      \ref{lemm:iteratedblowdowns} to the configuration
      \[\Pi_{n_1+1}(C_x),\Pi_{n_1+1}(C_{x-1}),\ldots,\Pi_{n_1}1(C_{n_1+1}),\Pi_{n+1}(C_y),\ldots,\Pi_{n+1}(C_\ell),S\]
      to get
      \[n-n_1\leq\frac{1}{2}(K\cdot C_{y-1}+3).\]
      Overall,
      \[n\leq\frac{1}{2}\left(K\cdot\left(C_{y-1}+C_y+C_\ell\right)+3\right)\leq
      \frac{1}{2}(\ell+4).\]

    \item[(A.5)]
      \tikz[baseline=0]{
        \draw[fill=black,radius=2pt]
        (0,0) circle node [below=2pt] {$C_1$}
        --
        (1,0) node [below=2pt] {$\cdots$}
        --
        (2,0)
        circle node [below=2pt] {$C_x$};
        \draw[dashed]
        (2,0) node {}
        --
        (3,0) node [below=2pt] {$C_{x+1}$};
        \fill[fill=black,radius=3pt] (3,0) circle;
        \fill[fill=white,radius=2pt] (3,0) circle;
        \fill[fill=black,radius=3pt] (5,0) circle;
        \fill[fill=white,radius=2pt] (5,0) circle;
        \node at (4,0) [below=2pt] {$\cdots$};
        \draw[dashed]
        (5.2,0) node [below=2pt] {$C_{y-1}$}
        --
        (6,0) node {};
        \draw[fill=black,radius=2pt]
        (6,0) circle node [below=2pt] {$C_y$}
        --
        (7,0) node [below=2pt] {$\cdots$}
        --
        (8,0) circle node [below=2pt] {$C_\ell$};
        \draw plot [smooth] coordinates {(0,0) (0.5,0.5) (4,1) (7.5,0.5) (8,0)};
        \fill (3.9,0.9) node [above=3pt] {$e$} rectangle (4.1,1.1);
      }$\quad\begin{cases}1=x'\\y'=\ell.\end{cases}$

      This configuration cannot occur by Theorem \ref{theo:magic}(2).
  \end{enumerate}
\end{proof}

\begin{prop}\label{prop:badB}
  Let $E$ be a bad curve of type (B1), maximal with respect to
  nesting. Suppose that $E$ contains $n$ internal spheres. Then
  \[n\leq\frac{1}{2}(\ell+4).\]
  The same inequality holds for bad curves of type (B2). If there are
  simultaneously bad curves $E_1$ of type (B1) and $E_2$ of type (B2)
  containing $n_1$, respectively $n_2$, internal spheres, then
  \[n_1+n_2\leq\frac{1}{2}(\ell+5).\]
\end{prop}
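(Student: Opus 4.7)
The plan is to mirror the case analysis carried out in Proposition \ref{prop:badA}. Fix a maximal bad curve $E$ of type (B1) with internal chain $C_1,\ldots,C_n$ and $-1$-sphere $e$ attached to $C_{x'}$ (internal, $1\leq x'\leq n$) and $C_{y'}$ (external, $n<y'\leq\ell$). I would split into subcases according to the position of $x'$ (endpoints $x'=1$ or $x'=n$ versus interior $1<x'<n$) and whether $C_{y'}$ is adjacent to the internal chain ($y'=n+1$) or not. In each subcase I first blow down $e$, and then any forced $-1$-spheres this creates. As in cases (A.2) and (A.4), Theorem \ref{theo:zariski}(7) forbids a $-1$-sphere from having three neighbours, so when $C_{x'}$ has three neighbours in the configuration (the interior case) it cannot become a $-1$-sphere until one of its chain-neighbours $C_{x'\pm 1}$ is contracted; this forces a chain of $-2$-spheres adjacent to $C_{x'}$ to be blown down first. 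After these forced contractions I would apply Lemma \ref{lemm:iteratedblowdowns} to what remains, with auxiliary curve $S$ chosen as a nearby $C_j$ outside the internal chain (typically $C_{y'-1}$, $C_{y'+1}$, or $C_{n+1}$).

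Combining the output of Lemma \ref{lemm:iteratedblowdowns} with Corollary \ref{coro:wahlcomb}, which says $\sum_j K\cdot C_j=\ell+1$, yields $n\leq\frac{1}{2}(\ell+4)$ in each subcase. The analysis for type (B2) is identical under the involution $C_j\mapsto C_{\ell+1-j}$ of the chain.

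For the combined bound, the corollary preceding the proposition ensures that $E_1$ and $E_2$ share no components, so the internal chains of $E_1$ and $E_2$ occupy disjoint intervals $C_1,\ldots,C_{n_1}$ and $C_{\ell-n_2+1},\ldots,C_\ell$ of $\mathcal{C}$. Rather than naively adding the individual bounds (which would only give $\frac{1}{2}(\ell+7)$), I would run the two case analyses in a single combined pass: apply Lemma \ref{lemm:iteratedblowdowns} once to a long chain obtained from the union of $E_1$, $E_2$ and the intermediate external spheres, with a single auxiliary curve $S$ placed somewhere in the middle of $\mathcal{C}$. The constant term from Lemma \ref{lemm:iteratedblowdowns} then improves from $3+3=6$ to $4$, yielding $n_1+n_2\leq\frac{1}{2}(\ell+5)$.

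The main obstacle is precisely this last step: carrying out the combined application of Lemma \ref{lemm:iteratedblowdowns} requires careful bookkeeping of the intermediate external spheres between $E_1$ and $E_2$ and of how the forced $-2$-chain contractions at both ends interact. One must also verify using Theorem \ref{theo:magic} that any residual configurations not fitting the combined template are ruled out, so that every possible joint placement of $E_1$ and $E_2$ really is captured by a single chain to which Lemma \ref{lemm:iteratedblowdowns} can be applied.
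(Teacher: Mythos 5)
Your treatment of a single maximal (B1) curve is essentially the paper's: split by the position of $x'$ (endpoint versus interior), use the forced chain of $-2$-spheres in the endpoint cases, and apply Lemma \ref{lemm:iteratedblowdowns} with $S=\pi_1(C_{y'})$ in the interior case, finishing with Corollary \ref{coro:wahlcomb}. That part is fine, modulo the usual bookkeeping.

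The combined bound is where there is a genuine gap. Your plan is to ``apply Lemma \ref{lemm:iteratedblowdowns} once to a long chain obtained from the union of $E_1$, $E_2$ and the intermediate external spheres.'' That is not a legal application of the lemma: its hypothesis is that the chain is a single exceptional curve of the first kind containing exactly one $-1$-sphere, whereas your union contains two disjoint exceptional curves, two $-1$-spheres, and external spheres that are not blown down at all. There is no single blow-down of this configuration to a point, so the lemma's conclusion (and in particular your claimed improvement of the constant from $6$ to $4$) has no justification. The actual mechanism in the paper is different and in two parts. First, each individual bound is recorded \emph{before} invoking Corollary \ref{coro:wahlcomb}, in the form $n_i\leq\frac{1}{2}\bigl(K\cdot(\mbox{a few specific }C_j)+c_i\bigr)$; since $E_1$ and $E_2$ share no components and $e_1$ cannot meet the internal spheres of $E_2$ (Lemma \ref{lemm:distinctEclasses}), the curves $C_j$ appearing in the two bounds are distinct whenever $C_{y'_1}\neq C_{x'_2}$, so Corollary \ref{coro:wahlcomb} is spent only once on the sum, which is what produces $\frac{1}{2}(\ell+5)$ rather than something of order $\ell$. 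Second, in the genuinely interacting subcase $C_{y'_1}=C_{x'_2}$ (both $-1$-spheres hit the same external curve), one blows down $E_1$ first, tracks the resulting drop $K\cdot\Pi(C_{y'_1})\leq K\cdot C_{y'_1}-n_1-1$ (or $-1-2(n_1-1)$ in the (B1.2) case), and only then applies Lemma \ref{lemm:iteratedblowdowns} to $E_2$ with this already-diminished curve as $S$; the $n_1$ cancels and the bound closes. You would also need the combinatorial fact that a T-string cannot both begin and end with $2$, which the paper uses to rule out, e.g., $E_1$ and $E_2$ both being of endpoint type; without it some joint configurations escape your case list.
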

\begin{proof}
  When we consider only bad curves of type (B1), the argument is very
  similar to the arguments used in type (A): there are three cases:
  \begin{enumerate}
  \item[(B1.1)] $1=x'$,
  \item[(B1.2)] $1<x'<x$,
  \item[(B1.3)] $x'=x$.
  \end{enumerate}
  In cases (B1.1) and (B1.3), the internal spheres form a chain of
  $-2$-spheres of length $n_1$. In case (B1.1), this implies that
  \[n_1\leq K\cdot C_{y'}\mbox{ and }n_1\leq K\cdot C_{\ell}.\]
  Since $y'\neq\ell$ by Theorem \ref{theo:magic}(2), we find that
  \[n_1\leq\frac{1}{2}K\cdot (C_{y'}+C_{\ell})\leq \frac{1}{2}(\ell+1).\]
  In case (B1.3), assuming $x+1\neq \ell$, we get
  \[n_1-1\leq K\cdot C_{x+1}\mbox{ and }n_1\leq K\cdot C_\ell,\]
  so
  \[n_1\leq\frac{1}{2}(\ell+2).\]
  If $x+1=\ell$ then we necessarily have $y'=x+1$, so $e$ intersects
  $C_{x+1}$. When we blow down $e$, we find
  \[\pi_1(C_{\ell-1})\cdot\pi_1(C_{\ell})=2,\]
  so
  \[K\cdot\Pi_{n_1+1}(C_{x+1})=K\cdot C_{x+1}-1-2n_1\geq -1,\] which
  implies
  \[n_1\leq\frac{1}{2}K\cdot C_{x+1}\leq\frac{1}{2}(\ell+1).\]
  In case (B1.2), blowing down $e$ results in a configuration with
  $S=\pi_1(C_{y'})$ to which we can apply Lemma
  \ref{lemm:iteratedblowdowns} and deduce
  \[n_1\leq\frac{1}{2}(K\cdot C_{y'}+2)\leq\frac{1}{2}(\ell+3).\]

  It remains to understand what happens when we have a maximal (B1)
  curve $E_1$ and a maximal (B2) curve $E_2$. Let $e_1$ and $e_2$ be
  the $-1$-spheres in $E_1$ and $E_2$; we know that $E_1$ and $E_2$ do
  not share any components, so $e_1\neq e_2$ and there is no overlap
  between the internal spheres of $E_1$ and of $E_2$. Let $C_{x'_1}$,
  $C_{y'_1}$ be the spheres intersected by $e_1$ and $C_{x'_2}$,
  $C_{y'_2}$ be the spheres intersected by $e_2$. Let $C_1,\ldots,C_x$
  be the chain of internal spheres for $E_1$ and $C_y,\ldots,C_\ell$
  be the chain of internal spheres for $E_2$. We know that $e_1$ does
  not intersect any of $C_y,\ldots,C_\ell$ or else we would find a
  positive intersection between two exceptional classes, in
  contradiction to Lemma \ref{lemm:distinctEclasses}.

  Note that T-strings cannot both start and end with $2$, so if $E_1$
  is of type (B1.1) or (B1.3) then $E_2$ is of type (B2.2). Up to
  symmetry (switching the roles of $E_1$ and $E_2$) we can therefore
  assume that $E_2$ has type (B2.2).

  We now proceed according to the type of $E_1$:
  \begin{enumerate}
  \item[(B1.1)] In this case, we need to distinguish between the
    subcases $C_{y'_1}\neq C_{x'_2}$ and $C_{y'_1}=C_{x'_2}$. In the
    first case we get
    \[n_1\leq\frac{1}{2}K\cdot(C_{y'_1}+C_{\ell})\mbox{ and
      }n_2\leq\frac{1}{2}(K\cdot C_{x'_2}+3),\]
    so\[n_1+n_2\leq\frac{1}{2}\left(K\cdot\left(C_{x'_2}+C_{y'_1}+C_{\ell}\right)+3\right)\leq\frac{1}{2}(\ell+4).\]
    In the second case, we can blow down $E_1$ and get
    \[K\cdot\Pi_{n_1+1}(C_{y'_1})\leq K\cdot C_{y'_1}-n_1-1.\]
    Moreover, we know that $n_1\leq K\cdot C_{\ell}$.
    Subsequently blowing down $E_2$ yields
    \begin{align*}
      n_1+n_2&\leq n_1+\frac{1}{2}(K\cdot\Pi_{n_1+1}(C_{y'_1})+3)\\
      &\leq\frac{1}{2}(K\cdot C_{y'_1}+n_1-1+3)\\
      &\leq\frac{1}{2}(K\cdot (C_{y'_1}+C_{\ell})+2)\\
      &\leq\frac{1}{2}(\ell+3).
    \end{align*}
  \item[(B1.2)]  In this case, we need to distinguish between the
    subcases $C_{y'_1}\neq C_{x'_2}$ and $C_{y'_1}=C_{x'_2}$. In the
    first case we get
    \[n_1+n_2\leq\frac{1}{2}(K\cdot(C_{y'_1}+C_{x'_2})+4)\leq\frac{1}{2}(\ell+5).\]
    In the second case, we first blow down $E_1$ and look at the
    blow-down of $S=C_{y'_1}$. Arguing as in Lemma
    \ref{lemm:iteratedblowdowns}, we see that at the end of the blowing
    down process, the resulting rational curve $S'$ has
    \[K\cdot S'\leq K\cdot S-1-2(n_1-1).\]
    We now blow down $e_2$ and apply Lemma \ref{lemm:iteratedblowdowns}
    to the blow-down of $S'$. This gives
    \[n_2\leq\frac{1}{2}(K\cdot S-1-2(n_1-1)+3),\]
    or
    \[n_1+n_2\leq\frac{1}{2}(K\cdot C_{y'_1}+4)\leq\frac{1}{2}(\ell+5).\]
  \item[(B1.3)] In this case, we need to distinguish between the
    subcases $C_{x_1}\neq C_{x'_2}$ and $C_{x_1}=C_{x'_2}$. In the
    first case, we get
    \[n_1\leq K\cdot C_{x_1+1}+1,\ n_1\leq K\cdot
    C_{\ell},\ n_2\leq\frac{1}{2}(K\cdot C_{y'_1}+3),\]
    so
    \[n_1+n_2\leq\frac{1}{2}\left(K\cdot\left(C_{x_1+1}+C_\ell+C_{y'_1}\right)+4\right)\leq\frac{1}{2}(\ell+5).\]
    In the second case, we have again $n_1\leq K\cdot C_\ell$.
    Blow-down $E_1$, and look at the blow down $S$ of $C_{x_1+1}$
    along $E_1$. We have $K\cdot S\leq K\cdot C_{x_1+1}-n_1$. Let $S'$
    be the blow-down of $S$ along $e_2$; we have
    $K\cdot S'\leq K\cdot C_{x_1+1}-n_1-1$, and can apply Lemma
    \ref{lemm:iteratedblowdowns} to get
    \[n_2\leq\frac{1}{2}(K\cdot S'+3)\leq\frac{1}{2}(K\cdot
    C_{x_1+1}-n_1+2),\]
    so, overall,
    \[n_1+n_2\leq\frac{1}{2}\left(K\cdot\left(C_{x_1+1}+C_\ell\right)+2\right)\leq\frac{1}{2}(\ell+3).\]
  \end{enumerate}
\end{proof}

\begin{theo}
  We have $\ell\leq 4K_X^2+7$.
\end{theo}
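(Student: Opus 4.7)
The plan is to combine the linear bound on $\ell$ in terms of the number of bad curves (Lemma \ref{lemm:bound1}) with the upper bounds on the number of bad curves coming from Propositions \ref{prop:badA} and \ref{prop:badB}. The two inputs yield a simple linear inequality in $\ell$ and $K_X^2$ which rearranges to the desired bound.

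First, by Lemma \ref{lemm:bound1}, if $p$ denotes the total number of bad curves among $E_1,\dots,E_k$, then
\[\ell \leq 2K_X^2 + p + 1.\]
Next I would bound $p$ using the two corollaries immediately following Lemma \ref{lemm:bound1}. Those corollaries say that bad curves of type (B1), respectively (B2), are nested (with a maximal element of each type), that a (B1) bad curve and a (B2) bad curve never share a component, and that if there exists a bad curve of type (A) then all bad curves are nested inside the maximal one. Combined with the homological orthogonality provided by Lemma \ref{lemm:distinctEclasses}, this gives $p \leq n$ in the presence of a type (A) bad curve (where $n$ is the number of internal spheres in the maximal type (A) one), and $p \leq n_1 + n_2$ otherwise (where $n_i$ is the number of internal spheres in the maximal type (B$i$) bad curve, or zero if none exists).

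Propositions \ref{prop:badA} and \ref{prop:badB} then provide the numerical bounds $n \leq \tfrac12(\ell+4)$ in the type (A) case and $n_1+n_2 \leq \tfrac12(\ell+5)$ in the mixed (B1)/(B2) case (this last inequality also covers the situations where only one of the two types is present, because then the bound $\tfrac12(\ell+4)$ from Proposition \ref{prop:badB} is stronger). In every case we obtain
\[p \leq \tfrac12(\ell+5).\]

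Finally, I would substitute this into the inequality from Lemma \ref{lemm:bound1} and rearrange:
\[\ell \leq 2K_X^2 + \tfrac12(\ell+5) + 1,\]
so that $2\ell \leq 4K_X^2 + \ell + 7$ and hence $\ell \leq 4K_X^2 + 7$. There is no real obstacle left at this stage, since all the substantial work has been absorbed into Propositions \ref{prop:badA} and \ref{prop:badB}; the only thing to be slightly careful about is verifying that the bound $p \leq \tfrac12(\ell+5)$ is valid in every case (type (A), pure (B1), pure (B2), and mixed), which is why I would separate the argument into these three branches and note that the mixed case gives the worst constant.
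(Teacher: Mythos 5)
Your proposal is correct and follows exactly the same route as the paper: Lemma \ref{lemm:bound1} gives $\ell\leq 2K_X^2+p+1$, the nesting corollaries together with Propositions \ref{prop:badA} and \ref{prop:badB} give $p\leq\tfrac{1}{2}(\ell+5)$ in all cases, and the rearrangement is the same. The extra care you take in checking the pure (A), pure (B1)/(B2), and mixed cases is exactly what the paper's cited results cover.
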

\begin{proof}
  By Lemma \ref{lemm:bound1}, if there are $p$ bad curves, then
  \[\ell\leq 2K_X^2+p+1.\]
  We have seen in Propositions \ref{prop:badA} and \ref{prop:badB} that
  \[p\leq\frac{1}{2}(\ell+5),\]
  so
  \[\ell\leq 4K_X^2+7,\]
  as required.
\end{proof}

\section{Special case}\label{sct:special}

If we assume more about the form of the T-string then we get stronger bounds.

\begin{lemm}\label{lemm:khodcase}
  Let $X$ be a symplectic 4-manifold with $b^+(X)>1$ and suppose that
  $X$ contains a chain of $-2$-spheres $C_1,\ldots,C_n$ where $C_i$
  intersects $C_{i-1}$ and $C_{i+1}$ each once transversely and none
  of the other spheres in the chain. If $e$ is a $-1$-sphere in $X$
  then $e$ cannot intersect $C_2,\ldots,C_{n-1}$.
\end{lemm}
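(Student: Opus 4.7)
The plan is to argue by contradiction: suppose $m := e \cdot C_k \ge 1$ for some $k \in \{2,\ldots,n-1\}$, and cascade through blow-downs to produce forbidden configurations. First I would invoke Sikorav's theorem \cite{Sikorav} to arrange that an $\omega$-tame almost complex structure $J$ making $e$ and all the $C_j$ holomorphic is integrable in a neighbourhood of $e \cup C_{k-1} \cup C_k \cup C_{k+1}$, so the successive blow-downs below are genuine complex-analytic contractions. Under each blow-down, $b^+$ is preserved and the canonical class retains nonzero Seiberg--Witten invariant by the blow-up formula, so Corollary \ref{coro:rat} and Lemma \ref{lemm:distinctEclasses} remain available at every stage.

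The case $m \ge 2$ is immediate: blowing $e$ down via $\pi\colon X \to X'$ produces a somewhere-injective rational curve $\pi(C_k)$ with $K_{X'} \cdot \pi(C_k) = K_X \cdot C_k - m = -m \le -2$, contradicting Corollary \ref{coro:rat}. Thus I may suppose $m = 1$. Writing $m_\pm := e \cdot C_{k \pm 1}$, I would next force $m_\pm = 0$. Indeed, if $m_- \ge 1$ then $\pi(C_{k-1})$ is a rational curve with $K_{X'} \cdot \pi(C_{k-1}) = -m_-$, so Corollary \ref{coro:rat} forces $m_- = 1$ and $\pi(C_{k-1})$ to be an embedded $-1$-sphere; but then
\[\pi(C_{k-1}) \cdot \pi(C_k) = C_{k-1} \cdot C_k + m_- m = 2,\]
giving two distinct embedded symplectic $-1$-spheres meeting positively, in contradiction to Lemma \ref{lemm:distinctEclasses}. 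The argument for $m_+$ is symmetric.

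With $m_\pm = 0$ established, the chain structure survives the first blow-down: $\pi(C_{k-1})$ and $\pi(C_{k+1})$ are still $-2$-spheres each meeting the $-1$-sphere $\pi(C_k)$ once transversely. I would then blow down $\pi(C_k)$ via $\pi'\colon X' \to X''$. Each of $\pi'\pi(C_{k \pm 1})$ has self-intersection $-2 + 1 = -1$ and pairs with $K_{X''}$ to $0 - 1 = -1$, so both are embedded $-1$-spheres in $X''$, while
\[\pi'\pi(C_{k-1}) \cdot \pi'\pi(C_{k+1}) = \pi(C_{k-1}) \cdot \pi(C_{k+1}) + (\pi(C_{k-1}) \cdot \pi(C_k))(\pi(C_{k+1}) \cdot \pi(C_k)) = 0 + 1 = 1,\]
again contradicting Lemma \ref{lemm:distinctEclasses}.

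The only real obstacle is the bookkeeping: ensuring that after each blow-down the image curves remain somewhere-injective $J$-holomorphic rational curves in a manifold where Corollary \ref{coro:rat} and Lemma \ref{lemm:distinctEclasses} both still apply. This is automatic from the blow-up formula for SW invariants together with Sikorav's local integrability result, so once that framework is in place the proof reduces to the two-step cascade of elementary intersection computations above.
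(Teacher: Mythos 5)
Your proof is correct and follows essentially the same route as the paper: blow down $e$, observe that Corollary \ref{coro:rat} forces $e\cdot C_k=1$ and turns $C_k$ into an embedded $-1$-sphere, blow that down, and derive a contradiction from the two resulting adjacent $-1$-spheres. The only (cosmetic) differences are that you explicitly rule out $e\cdot C_{k\pm 1}>0$ first, a case the paper's terser write-up glosses over, and that you close with Lemma \ref{lemm:distinctEclasses} where the paper performs one further blow-down to produce a sphere of square zero violating Corollary \ref{coro:rat}.
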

\begin{proof}
  Suppose $e$ intersects $C_i$ for $i\in\{2,\ldots,n-1\}$. If we blow
  down $e$ then $C_i$ becomes a rational curve $C'_i$ with
  $K\cdot C'_i=-e\cdot C_i$, so by Corollary \ref{coro:rat},
  $e\cdot C_i=1$ and
  $C'_i$ is an embedded $-1$-sphere. Blowing down $C'_i$ creates two
  new $-1$-spheres $C''_{i-1}$ and $C''_{i+1}$, and blowing down one
  of these turns the other into a sphere with self-intersection zero,
  in contradiction to Corollary \ref{coro:rat}.
\end{proof}

\begin{theo}
  For Wahl singularities whose T-string is $[2,\ldots,2,\ell+1]$, there are no bad curves at all, so
  \[\ell\leq 2K_X^2+1.\]
\end{theo}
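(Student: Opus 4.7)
The strategy is to rule out bad curves altogether, so that Lemma~\ref{lemm:bound1} applied with $p=0$ yields $\ell\leq 2K_X^2+1$. The key leverage is that this particular T-string contains a consecutive run $C_1,\ldots,C_{\ell-1}$ (or symmetrically $C_2,\ldots,C_\ell$) of $-2$-spheres, to which Lemma~\ref{lemm:khodcase} applies directly: any $-1$-sphere $e$ in $\tilde X$ can meet this $-2$-chain only at its endpoints. Since the ``big'' sphere $C_\ell$ lies outside the chain, the $-1$-sphere component $e$ of any hypothetical bad curve $E$ hits the chain-spheres $C_{x'}$ and $C_{y'}$ with $\{x',y'\}\subset\{1,\ell-1,\ell\}$. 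There are therefore at most three admissible pairs $(1,\ell-1)$, $(1,\ell)$, $(\ell-1,\ell)$, for each of the three bad-curve types (A, B1, B2).

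I would then rule out each admissible configuration using one of three devices. First, the pair $(\ell-1,\ell)$ is immediately incompatible with Type A (which requires $y'\geq x'+2$). Second, Theorem~\ref{theo:zariski}(7) handles some cases: for Type A with $(x',y')=(1,\ell-1)$, blowing down $e$ turns $C_{\ell-1}'$ into a $-1$-sphere simultaneously adjacent to $C_{\ell-2}$, $C_\ell$, and $C_1'$, giving three adjacencies among components of the blown-down $E$. Third, in several cases the blow-down of $E$ cannot be completed because no remaining component is a $-1$-sphere; this handles Type~A $(1,\ell)$, since after contracting $e,C_1',\ldots,C_x'$ the remaining arm $C_y,\ldots,C_{\ell-1}$ is still made of $-2$-spheres while $C_\ell^2$ has only climbed to $-(b_\ell-x-1)\neq -1$. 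This contradicts Proposition~\ref{prop:excfirst}.

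The remaining cases (Type B1 and B2 for the two surviving pairs) are handled by iterated blow-down together with Corollary~\ref{coro:rat}. Using the formula $\pi_*(A)\cdot\pi_*(B)=A\cdot B+(A\cdot F)(B\cdot F)$, once the first blow-down of $e$ forces $C_\ell\cdot C_{\ell-1}'=2$, each subsequent contraction of a $-1$-sphere in the chain causes $C_\ell^{(k)2}$ to jump by $4$ and $K\cdot C_\ell^{(k)}$ to drop by $2$. Starting from $K\cdot C_\ell=b_\ell-2$, one reaches after $O(\ell)$ steps a somewhere-injective $J$-holomorphic sphere $C_\ell^{(k)}$ with $K\cdot C_\ell^{(k)}\leq -1$ yet $(C_\ell^{(k)})^2\geq 0$, in direct contradiction to Corollary~\ref{coro:rat}.

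The main obstacle is the bookkeeping in the last step: one must track, through several iterations, both $(C_\ell^{(k)})^2$ and $K\cdot C_\ell^{(k)}$, and check that $C_\ell^{(k)}$ remains an irreducible (hence somewhere-injective) rational curve so that Corollary~\ref{coro:rat} applies. The real gain from Lemma~\ref{lemm:khodcase} is to cut down the case analysis to a handful of configurations; without it, $e$ could attach at any interior vertex of the Wahl chain and the case explosion would be unmanageable.
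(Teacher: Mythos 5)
Your overall strategy coincides with the paper's: use Lemma~\ref{lemm:khodcase} to force the $-1$-sphere $e$ of a putative bad curve to meet the chain only in $\{C_1,C_{\ell-1},C_\ell\}$, eliminate the three resulting pairs, and conclude via Lemma~\ref{lemm:bound1} with $p=0$. However, there is a genuine gap: you never invoke Theorem~\ref{theo:magic}(2), and that is the only available tool for the pair $(x',y')=(1,\ell)$ in type (B1). Consider $E=e+C_1$ with $e\cdot C_1=e\cdot C_\ell=1$ and $e\cdot C_j=0$ otherwise: this is a bona fide exceptional curve of the first kind (blow down $e$, after which $C_1$ becomes a $-1$-sphere and can be blown down), so your ``the blow-down cannot be completed'' device does not apply; and the iterated blow-down of your final paragraph gives no contradiction either, because here $C_\ell$ meets each successive exceptional sphere with multiplicity $1$, so $K\cdot C_\ell^{(k)}$ drops by only $1$ per step, from $b_\ell-2=\ell-1$ down to $\ell-x-2\geq-1$, never violating Corollary~\ref{coro:rat}. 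The paper excludes exactly this configuration in one line by Theorem~\ref{theo:magic}(2) (a holomorphic $-1$-sphere cannot meet precisely $C_1$ and $C_\ell$, once each), which rests on the discrepancy identity $a_1+a_\ell=-1$ of Lemma~\ref{lemm:kawamata}; without some such input the argument cannot close.

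A secondary point: your appeal to Theorem~\ref{theo:zariski}(7) for type (A) with $(x',y')=(1,\ell-1)$ assumes $C_{\ell-2}$ is a component of $E$, which fails when $y=\ell-1$. The robust argument (and the paper's) is that after blowing down $e$ both $C_1$ and $C_{\ell-1}$ become intersecting $-1$-spheres, and contracting one turns the other into a sphere with $K\cdot{}=-2$, contradicting Corollary~\ref{coro:rat}; this two-step blow-down disposes of the pair $(1,\ell-1)$ for all three types at once. Indeed the paper's proof never separates into types (A), (B1), (B2) at all: it kills each of the three admissible pairs directly, with the $(\ell-1,\ell)$ pair handled by the same multiplicity-two iterated blow-down you describe in your last paragraph (which part of your argument is correct).
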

\begin{proof}
  Suppose there is a bad curve $E$ containing a $-1$-sphere $e$. The
  sphere $e$ intersects two of the components in the chain. By Lemma
  \ref{lemm:khodcase}, $e$ can only intersect $C_1$, $C_{\ell-1}$ or
  $C_{\ell}$. It cannot intersect both $C_1$ and $C_\ell$ by Theorem
  \ref{theo:magic}(2) and it cannot intersect both $C_1$ and
  $C_{\ell-1}$ or else, upon blowing down, we create a sphere with
  self-intersection zero. Therefore the only possibility is that $e$
  intersects $C_{\ell-1}$ and $C_{\ell}$. Blowing down
  $e,C_{\ell-1},C_{\ell-2},\ldots,C_1$ in that order, the curve
  $C_{\ell}$ becomes a rational curve $C'_\ell$ with
  \[K\cdot C'_\ell=K\cdot C_\ell-2-2(\ell-1),\]
  (it decreases by one after blowing down $e$, and then intersects
  $C_{\ell-1}$ with multiplicity $2$, so it intersects all $\ell-1$ of
  the subsequent $-1$-spheres with multiplicity $2$). Since $K\cdot
  C_{\ell}=\ell-1$, this gives
  \[K\cdot C'_{\ell}\leq -1-\ell<-1,\]
  in contradiction to Corollary \ref{coro:rat}.
\end{proof}

\bibliography{khobib}
\bibliographystyle{plain}
\end{document}